\newif\ifMS
\DeclarePairedDelimiterXPP{\opnorm}[1]{}{\lVert}{\rVert}{_{\mathrm{op}}}{#1}
\DeclarePairedDelimiterXPP{\nucnorm}[1]{}{\lVert}{\rVert}{_{*}}{#1}
\newcommand{\st}{\text{ s.t.\ }}
\newcommand{\PT}{\scrP_{\scrT}}
\newcommand{\PTp}{\scrP_{\scrT^\perp}}
\newcommand{\PTpmat}{P_{\scrT^\perp}}
	\newcommand{\mymathbold}{\symbf}%
	\newcommand{\mymathbold}{\bm}%
\newcommand{\scrbar}[1]{\overline{\mathcal{#1}}}
\newcommand{\scrhat}[1]{\widehat{\mathcal{#1}}}
\newcommand{\scrtl}[1]{\widetilde{\mathcal{#1}}}
\DeclareMathOperator{\E}{\mathbf{E}}
\renewcommand{\P}{\operatorname{\mathbf{P}}}
\newcommand{\rank}{\operatorname{rank}}
\newcommand{\tr}{\operatorname{tr}}
\newcommand{\ddiag}{\operatorname{ddiag}}
\DeclarePairedDelimiter{\norm}{\lVert}{\rVert}
\DeclarePairedDelimiter{\abs}{\lvert}{\rvert}
\DeclarePairedDelimiter{\braces}{\{}{\}}
\DeclarePairedDelimiter{\parens}{(}{)}
\DeclarePairedDelimiter{\brackets}{[}{]}
\DeclarePairedDelimiterX{\ip}[2]{\langle}{\rangle}{#1,#2}
\DeclarePairedDelimiterXPP{\normsub}[2]{}{\lVert}{\rVert}{_{#2}}{#1}
\DeclarePairedDelimiterXPP{\ipsub}[3]{}{\langle}{\rangle}{_{#3}}{#1,#2}
\DeclarePairedDelimiterXPP{\ipHS}[2]{}{\langle}{\rangle}{_{\mathrm{HS}}}{#1, #2}
\DeclarePairedDelimiterXPP{\normHS}[1]{}{\lVert}{\rVert}{_{\mathrm{HS}}}{#1}
\DeclarePairedDelimiterXPP{\ipF}[2]{}{\langle}{\rangle}{_{\mathrm{F}}}{#1, #2}
\DeclarePairedDelimiterXPP{\normF}[1]{}{\lVert}{\rVert}{_{\mathrm{F}}}{#1}
\DeclarePairedDelimiterXPP{\dkl}[2]{\operatorname{D_{KL}}}{(}{)}{}{#1 \: \delimsize\Vert \: #2}
\DeclarePairedDelimiterXPP{\restr}[2]{}{{}}{\vert}{_{#2}}{#1}
\newcommand{\R}{\mathbf{R}}
\newcommand{\C}{\mathbf{C}}
\newcommand{\F}{\mathbf{F}}
\newcommand{\indicator}[1]{\mathbf{1}_{\{ #1 \}}}
\newcommand{\range}{\operatorname{range}}
\theoremstyle{plain}%
\newtheorem{theorem}{Theorem}
\newtheorem{lemma}{Lemma}%
\theoremstyle{definition}%
\newtheorem{assumption}{Assumption}%
\Crefname{assumption}{Assumption}{Assumptions}
\newcommand{\herms}{\mathbf{H}}
\newcommand{\symms}{\mathbf{S}}
\newcommand{\real}{\operatorname{Re}}
\newcommand{\moment}{m}
\newcommand{\fourthmoment}{\moment_4}
\newcommand{\Pmat}{P_{U}}
\newcommand{\Ppmat}{P_U^\perp}
\newcommand{\PXp}{P_{X}^\perp}
\newcommand{\T}{\scrT}
\newcommand{\Tp}{\scrT^\perp}
\newcommand{\HT}{H_{\scrT}}
\newcommand{\HTp}{H_{\scrT^\perp}}
\newcommand{\fundingack}{This work was supported by the Swiss State Secretariat for Education, Research, and Innovation (SERI) under contract MB22.00027 during the author's time at EPFL.}
\begin{document}

\title{Phase retrieval and matrix sensing via benign and overparametrized nonconvex optimization}
\author{Andrew D.\ McRae%
	\ifMS \thanks{Manuscript received May 19, 2025; revised November 12, 2025. \fundingack{}}\fi %
	\thanks{The author is with CERMICS, CNRS, ENPC, Institut Polytechnique de Paris, Marne-la-Vallée, France. E-mail: \href{mailto:andrew.mcrae@enpc.fr}{andrew.mcrae@enpc.fr}.
		\ifMS \else This document is the accepted manuscript version of \url{https://doi.org/10.1109/TIT.2026.3684748}. © 2026 IEEE.  Personal use of this material is permitted.  Permission from IEEE must be obtained for all other uses, in any current or future media, including reprinting/republishing this material for advertising or promotional purposes, creating new collective works, for resale or redistribution to servers or lists, or reuse of any copyrighted component of this work in other works.
		\fi
	}%
}

\maketitle

\ifMS
\markboth{Journal of \LaTeX\ Class Files,~Vol.~14, No.~8, August~2015}%
{Shell \MakeLowercase{\textit{et al.}}: Bare Demo of IEEEtran.cls for IEEE Journals}
\fi

\begin{abstract}
We study a nonconvex optimization algorithmic approach to phase retrieval and the more general problem of semidefinite low-rank matrix sensing.
Specifically, we analyze the nonconvex landscape of a quartic Burer-Monteiro factored least-squares optimization problem.
We develop a new analysis framework, taking advantage of the semidefinite problem structure, to understand the properties of second-order critical points---specifically, whether they (approximately) recover the ground truth matrix.
We show that it can be helpful to (mildly) overparametrize the problem, that is, to optimize over matrices of higher rank than the ground truth.
We then apply this framework to several well-studied problem instances:
in addition to recovering existing state-of-the-art phase retrieval landscape guarantees (without overparametrization),
we show that overparametrizing by a factor at most logarithmic in the dimension allows recovery with optimal statistical sample complexity and error for the problems of (1) phase retrieval with sub-Gaussian measurements
and (2) more general semidefinite matrix sensing with rank-1 Gaussian measurements.
Previously, such statistical results had been shown only for estimators based on semidefinite programming.
More generally, our analysis is partially based on the powerful method of convex dual certificates, suggesting that it could be applied to a much wider class of problems.
\end{abstract}

\ifMS
\begin{IEEEkeywords}
	Phase retrieval, low-rank matrix sensing, nonconvex optimization landscapes, Burer-Monteiro factorization.
\end{IEEEkeywords}
\fi

\section{Introduction and result highlights}
\label{sec:intro}
\ifMS\IEEEPARstart{T}{his}
\else This
\fi
paper considers the problem of estimating of positive semidefinite (real or complex) $d \times d$ matrix $Z_*$ from (real) measurements of the form
\[
	y_i \approx \ip{A_i}{Z_*}, \quad i = 1, \dots, n,
\]
where $A_1, \dots, A_n$ are known positive semidefinite matrices,
and $\ip{\cdot}{\cdot}$ denotes the elementwise Euclidean (Frobenius) matrix inner product.
We will denote $r = \rank(Z_*)$, which we typically assume to be much smaller than the dimension $d$.
This is an instance of the well-studied problem of \emph{low-rank matrix sensing}.
However, our requirement that the measurement matrices $\{A_i\}$ are positive semidefinite is quite particular and, as we will see, adds significant structure to the problem.
Hence we refer to our problem as \emph{semidefinite} low-rank matrix sensing.

A key instance of this problem is \emph{phase retrieval},
where we want to recover a vector $x_*$ from (approximate) magnitude measurements of the form $\abs{\ip{a_i}{x_*}}$, where $a_1, \dots, a_n$ are known measurement vectors (on vectors, $\ip{\cdot}{\cdot}$ is the usual real or complex Euclidean inner product).
Phase retrieval arises in many applications, particularly those involving estimation or image reconstruction from optical measurements (where we may observe the intensity but not the phase of an electromagnetic wave). See \Cref{sec:relwork} for further reading.
Phase retrieval can be cast as a semidefinite rank-one matrix sensing problem by noting that $\abs{\ip{a_i}{x_*}}^2 = \ip{a_i a_i^*}{x_* x_*^*}$.

To be more precise, let $\F$ be the set of real or complex numbers (i.e., $\F = \R$ or $\F = \C$).
We denote by $\herms_d$ the set of Hermitian matrices in $\F^{d \times d}$.
We want to recover a rank-$r$ positive semidefinite (PSD) matrix $Z_* \in \herms_d$
from measurements of the form
\begin{equation}
	\label{eq:model_gen}
	\begin{gathered}
	y_i = \ip{A_i}{Z_*} + \xi_i \in \R,\quad i = 1,\dots, n, \quad \text{or} \\
	y = \scrA(Z_*) + \xi,
	\end{gathered}
\end{equation}
where $A_1, \dots, A_n \in \herms_d$ are known PSD matrices,
we denote $y = (y_1, \dots, y_n) \in \R^n$, $\xi = (\xi_1, \dots, \xi_n) \in \R^n$,
and the linear operator $\scrA \colon \herms_d \to \R^n$ is defined by
\[
	\scrA(S) = \begin{bmatrix*}\ip{A_1}{S} \\ \vdots \\ \ip{A_n}{S} \end{bmatrix*}.
\]
(note that the inner product of Hermitian matrices is always real).

In the phase retrieval problem, for unknown $x_* \in \F^d$ and known measurement vectors $a_1, \dots, a_n \in \F^d$, we use the model \eqref{eq:model_gen} with
\begin{equation}
	\label{eq:pr_model}
	Z_* = x_* x_*^*, \quad \text{and} \quad A_i = a_i a_i^*, \quad i = 1,\dots, n.
\end{equation}
If, as is often the case in practice, $x_*$ is real ($\F = \R$) but the measurements are complex,
we can make everything real by taking $A_i = \real(a_i a_i^*)$.

Given measurements of the form \eqref{eq:model_gen},
a natural question is how to estimate $Z_*$ from the data $\{(A_i, y_i)\}_{i=1}^n$.
Many algorithms exist, particularly for the phase retrieval model \eqref{eq:pr_model} (see the surveys and other selected references in \Cref{sec:relwork}).
However, the vast majority of theoretical guarantees are for the simplest and idealized case of phase retrieval with Gaussian measurements (i.e., taking $a_1, \dots, a_n$ to be standard Gaussian random vectors).
Outside this one well-studied case, the best theoretical guarantees (in terms of statistical performance) are generally for estimators based on some variant of the following convex semidefinite program (SDP):
\begin{equation*}
	\tag{PhaseLift}
	\label{eq:phaselift}
	\min_{Z \succeq 0}~\norm{y - \scrA(Z)}^2.
\end{equation*}
This is one version 
of the PhaseLift program introduced by \cite{Chai2011,Candes2013b} as a convex relaxation for phase retrieval.
Although \eqref{eq:phaselift} is convex and thus amenable to theoretical analysis, the feasible set has order $d^2$ degrees of freedom.
The ground truth $Z_*$ has order $rd$ degrees of freedom,
which is far lower if the rank $r$ is small (e.g., for phase retrieval, $r = 1$).
Thus, for computational and storage purposes, directly solving \eqref{eq:phaselift} with standard solvers is not very practical when $d$ is large.

Apart from the SDP approach of \eqref{eq:phaselift},
most algorithms and/or their theoretical guarantees (when those exist) are quite complicated,
with elements such as special cost functions, careful initialization and/or truncation schemes, or intricate analyses of iterative algorithms;
most of these theoretical analyses have suboptimal statistical properties and/or are forced to assume Gaussian measurements for simplicity.
Again, see \Cref{sec:relwork} for further reading.

We want an algorithmic approach that has the same conceptual simplicity and nice theoretical properties as \eqref{eq:phaselift} but scales better computationally as the dimension $d$ increases.
We therefore instead focus on a smooth low-rank Burer-Monteiro factored version of \eqref{eq:phaselift}.
Given a search rank parameter $p$,
the resulting problem is
\begin{equation*}
	\tag{BM-LS}
	\label{eq:opt_gen}
	\begin{aligned}
		&\min_{X \in \F^{d \times p}}~f_p(X), \quad \text{where} \\
		&\qquad f_p(X) = \norm{y- \scrA(X X^*)}^2.
	\end{aligned}
\end{equation*}

Two natural questions arise, answering which will be the main focus of this paper:
\begin{itemize}
	\item Although the problem \eqref{eq:opt_gen} is smooth ($f_p$ is a quartic polynomial in the elements\footnote{The real and imaginary parts, in the complex case.} of the variable $X$), it is nonconvex,
	and thus, potentially, local algorithms could get stuck in spurious local optima.
	Is this a problem?
	\item How do we choose the estimation rank $p$? The obvious choice is $p = r = \rank(Z_*)$ if it is known, but is this the best choice?
	For practical reasons we want $p$ to be small (e.g., constant or at least $\ll d$).
\end{itemize}

Nonconvex problems of the form \eqref{eq:opt_gen} have indeed been well studied in the low-rank matrix sensing and optimization literature,
and there are many positive results showing that such problems can have a \emph{benign landscape}:
that every local minimum (or even second-order critical point\footnote{A point where the gradient is zero and the Hessian is positive semidefinite; we unpack this condition in more detail in \Cref{sec:basic_calcs}.})
is ``good'' in some sense (e.g., it is a global optimum or at least is close to the ground truth).
This fits our purposes well, because it is well-known (and even rigorously proved---see, e.g., \cite{Curtis2018,Lee2019b} for merely two of many such results) that local search methods such as gradient descent or trust-region algorithms will find second-order critical points of problems like \eqref{eq:opt_gen}.

However, the vast majority of existing results (see \Cref{sec:relwork}) of this character make strong assumptions about the measurement operator $\scrA$:
specifically, that it has a \emph{restricted isometry property} (RIP) in the sense that (up to rescaling) $\frac{1}{n} \norm{\scrA(S)}^2 \approx \normF{S}^2$ for all $S \in \herms_d$ with low rank ($\normF{\cdot}$ denotes the matrix Frobenius norm, i.e., the elementwise Euclidean norm).
However, RIP is often an unreasonably strong condition.
For example, we will see soon that it does not hold for phase retrieval without an unreasonably large number of measurements.
Therefore, other approaches are needed.

In this paper, we develop a novel analysis framework of the nonconvex landscape of \eqref{eq:opt_gen}.
This framework does not require RIP and exploits the semidefinite problem structure.
We then use this framework to show that certain popular instances of \eqref{eq:opt_gen} do indeed have a benign landscape in that every second-order critical point either recovers (in the absence of noise) the ground truth and is globally optimal
or at least (with noise) gives a statistically accurate estimator of the ground truth.
In particular, our results reveal the benefits of mild \emph{rank overparametrization}, that is, setting the search parameter $p$ to be strictly larger than the ground truth rank $r$; we obtain a benign landscape with statistically optimal sample complexity with $p$ chosen to be at most of order $r \log d$.
Previous state-of-the-art statistical results for these problems directly studied some version of the SDP \eqref{eq:phaselift}.

In the rest of this introduction, we give a tour of some of the challenges faced, some of the concrete implications of our analysis, and some future research directions for which we believe our framework will be helpful.

\subsection{Phase retrieval with rank-one optimization}
\label{sec:intro_pr_r1}
To see better the challenges we face in trying to understand the landscape of \eqref{eq:opt_gen},
we begin with the simple and well-studied phase retrieval model \eqref{eq:pr_model} with Gaussian measurements.
As the target matrix has rank $1$, it is natural to consider \eqref{eq:opt_gen} with rank parameter $p = 1$.
We then obtain the problem
\begin{equation*}
	\tag{PR-LS}
	\label{eq:opt_pr_r1}
	\begin{gathered}
		\min_{x \in \F^d}~f(x), \qquad \text{where} \\
		f(x) = \norm{ y - \scrA(x x^*) }^2 = \sum_{i=1}^n (y_i - \abs{\ip{a_i}{x}}^2)^2.
	\end{gathered}
\end{equation*}
This objective function was proposed for phase retrieval by \cite{GuizarSicairos2008} (as a special case of a larger family of loss functions) before being studied in more detail by \cite{Candes2015}.

One might hope to use something like the restricted isometry property (RIP) mentioned before to analyze \eqref{eq:opt_pr_r1}.
For example, if the measurement vectors $a_1, \dots, a_n$ are chosen as independent and independently distributed (i.i.d.) real or complex standard Gaussian random vectors,
one can easily calculate that, for $S \in \herms_d$, $\scrA$ in expectation satisfies
\begin{equation}
	\label{eq:expectation_gauss}
	\begin{gathered}
		\E \frac{1}{n} \scrA^* \scrA(S) = \fourthmoment S + (\tr S) I_d \quad \\
		\Big \Downarrow \\
		\E \frac{1}{n} \norm{\scrA(S)}^2 = \fourthmoment \normF{S}^2 + \tr^2(S),
	\end{gathered}
\end{equation}
where $\scrA^* \colon \R^n \to \herms_d$ is the adjoint of $\scrA$ given by $\scrA^*(z) = \sum_{i=1}^n z_i A_i$,
and we define $\fourthmoment \coloneqq \E \abs{z}^4 - 1$,
where $z$ is a real or complex standard normal random variable;
with real Gaussian measurements, $\fourthmoment = 2$, and with complex Gaussian measurements, $\fourthmoment = 1$.
Thus, in expectation, up to the trace term and scaling, $\scrA$ is an isometry.

Unfortunately, even with Gaussian measurements, there is no hope of $\scrA$ having RIP without an unreasonably large (at least order $d^2$) number of measurements:
if we take $S = \frac{1}{\norm{a_1}^2} a_1 a_1^*$, which is rank-one and has unit Frobenius norm,
it is clear that\footnote{We write $a \lesssim b$ (equivalently, $b \gtrsim a$) to mean $a \leq c b$ for some unspecified but universal constant $c > 0$. We will similarly, on occasion, write $A \precsim B$ or $B \succsim A$ to denote the semidefinite ordering $c B - A \succeq 0$ for some $c > 0$. We write $a \approx b$ to mean $a \lesssim b$ and $a \gtrsim b$ simultaneously.}
\[
	\frac{1}{n} \norm{\scrA(S)}^2 \geq \frac{\norm{a_1}^4}{n} \gtrsim \frac{d^2}{n}
\]
with high probability. This phenomenon was noted for more general matrix sensing with rank-one measurement matrices in \cite{Cai2015}.

Nevertheless, with more specialized analysis,
we can still say something about the landscape of \eqref{eq:opt_pr_r1}.
Sun et al.\ \cite{Sun2018}, followed by Cai et al.\ \cite{Cai2023}, give positive results when the measurements are Gaussian.
The following theorem, proved in \Cref{sec:proof_gausslike}, is a generalization of their results (here, $\opnorm{\cdot}$ denotes the matrix $\ell_2$ operator norm). We ignore noise for the sake of simplicity and clarity (later results will account for it).
\begin{theorem}
	\label{thm:pr_benign_r1}
	Consider the model \eqref{eq:model_gen} with rank-one $Z_* = x_* x_*^*$, and assume exact measurements, that is, $\xi = 0$.
	\begin{enumerate}
		\item \label{lb:benign_gauss_p1} Suppose $\scrA$ satisfies, for some $\moment, \delta_L, \delta_U > 0$,
		\begin{align*}
			\frac{1}{n} \opnorm{\scrA^* \scrA(x_* x_*^*)} &\leq (1 + \moment + \delta_U) \norm{x_*}^2, \quad \text{and} \\
			\frac{1}{n} \norm{\scrA(x x^* - x_* x_*^*)}^2 &\geq (1 - \delta_L) [\moment \normF{x x^* - x_* x_*^*}^2 \\
			&\qquad + (\norm{x}^2 - \norm{x_*}^2)^2 ]
		\end{align*}
		for all $x \in \F^d$,
		and suppose
		\begin{equation}
			\label{eq:benign_gauss_cond}
			\moment^2 + 2\moment - 2 > 3 (\moment^2 + 2\moment) \delta_L + 2 (\moment + 1) \delta_U.
		\end{equation}
		Then every second-order critical point $x$ of \eqref{eq:opt_pr_r1} satisfies $x x^* = x_* x_*^*$, that is, $x = x_* s$ for some unit-modulus $s \in \F$.
		
		\item \label{lb:benign_gauss_p2} For fixed $x_*$, if $A_i = a_i a_i^*$ (or $A_i = \real(a_i a_i^*)$ if $\F = \R$) for i.i.d.\ real or complex standard Gaussian vectors $a_1, \dots, a_n$,
		then, for universal constants $c_1, c_2 > 0$,
		if $n \geq c_1 d \log d$,
		the conditions of part~\ref{lb:benign_gauss_p1} (with $\moment = \fourthmoment$) are satisfied
		with probability\footnote{Throughout this paper, we state probability bounds in the form $1 - c n^{-2}$ for some $c > 0$, but inspection of the proofs reveals that we can replace $n^{-2}$ with $n^{-\gamma}$ for any constant $\gamma > 0$ with only a change in the other (unspecified) constants depending on $\gamma$.} at least $1 - c_2 n^{-2}$.
	\end{enumerate}
\end{theorem}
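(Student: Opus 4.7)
\textbf{Part~\ref{lb:benign_gauss_p1} (deterministic landscape).} Write $E = xx^* - x_*x_*^*$. Differentiating $f(x) = \|\scrA(E)\|^2$ yields the first-order condition $\scrA^*\scrA(E)x = 0$ and the second-order condition that $\|\scrA(xu^* + ux^*)\|^2 + 2 u^*\scrA^*\scrA(E)u \geq 0$ for every $u \in \F^d$. Left-multiplying the first-order condition by $x^*$ and using $xx^* = E + x_*x_*^*$ gives the clean identity
\[\langle \scrA(E), \scrA(x_*x_*^*)\rangle = -\|\scrA(E)\|^2,\]
and in particular $x_*^*\scrA^*\scrA(E)x_* = -\|\scrA(E)\|^2$.

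The main step is to apply the second-order condition with the test direction $u = x_* s - \mu x$, where $s \in \F$ is a unit-modulus phase and $\mu \in \R$. Using the first-order condition, the term $u^*\scrA^*\scrA(E)u$ evaluates to $-\|\scrA(E)\|^2$ independently of $s$ and $\mu$, so the condition becomes $\|\scrA(M_s - 2\mu xx^*)\|^2 \geq 2\|\scrA(E)\|^2$ with $M_s = \bar s xx_*^* + s x_*x^*$. Optimizing over $\mu$ (an orthogonal projection in $\R^n$ off $\scrA(xx^*)$) and averaging over the phase $s$ in the complex case yields
\[\|\scrA(M_s)\|^2 - \frac{\langle \scrA(M_s), \scrA(xx^*)\rangle^2}{\|\scrA(xx^*)\|^2} \geq 2\|\scrA(E)\|^2.\]
Here the rank-one structure $A_i = a_ia_i^*$ is crucial: it makes the Cauchy--Schwarz inequality $|x_*^*A_ix|^2 \leq (x_*^*A_ix_*)(x^*A_ix)$ tight, giving (after the complex-case phase averaging) $\tfrac{1}{4}\|\scrA(M_s)\|^2 \leq \langle \scrA(x_*x_*^*), \scrA(xx^*)\rangle = \|\scrA(xx^*)\|^2$, the last step via the identity above. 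A similar first-order manipulation rewrites $\langle \scrA(M_s), \scrA(xx^*)\rangle$ as $2\operatorname{Re}(\bar s\, x_*^*\scrA^*\scrA(x_*x_*^*)x)$, which is then controlled using assumption~(a).

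Expressing everything in the scalar variables $a = \|x\|^2$, $b = \|x_*\|^2$, $c = |x^*x_*|^2$ (with $c \leq ab$ by Cauchy--Schwarz), using the lower bound from assumption~(b), and performing the necessary algebra, the combined inequalities reduce to
\[\bigl[m^2 + 2m - 2 - 3(m^2+2m)\delta_L - 2(m+1)\delta_U\bigr]\cdot \Psi(a,b,c) \leq 0,\]
where $\Psi \geq 0$ is a polynomial that vanishes precisely when $a = b$ and $c = ab$, equivalently $xx^* = x_*x_*^*$. Hypothesis~\eqref{eq:benign_gauss_cond} makes the bracketed coefficient strictly positive, forcing $\Psi = 0$. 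The main technical obstacle in this part is exactly this algebraic reduction: the condition on $m,\delta_L,\delta_U$ emerges as the precise threshold at which the first- and second-order inequalities cancel to pin $E$ to zero.

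\textbf{Part~\ref{lb:benign_gauss_p2} (Gaussian concentration).} It remains to verify (a) and (b) at $n \gtrsim d\log d$ for i.i.d.\ Gaussian $a_i$ with $m = \fourthmoment$. For~(a), the summands of $\frac{1}{n}\scrA^*\scrA(x_*x_*^*) - \E[\cdot]$ are i.i.d.\ rank-one sub-exponential matrices, and truncation at a polylogarithmic level combined with matrix Bernstein yields the operator-norm bound with the claimed tail. For~(b), a uniform lower bound on $\frac{1}{n}\|\scrA(xx^*-x_*x_*^*)\|^2$ is needed; since standard RIP fails at this sample complexity for rank-one Gaussian measurements, the argument exploits that $\{xx^*-x_*x_*^*:x\in\F^d\}$ has effective dimension $O(d)$, using an $\varepsilon$-net over $x$ together with scalar Bernstein tails (after truncation) at each net point. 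The main obstacle here is uniform lower-tail control in the presence of the heavy-tailed rank-one summands, which is what forces the $\log d$ factor in the sample complexity.
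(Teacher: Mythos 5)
Your first-order manipulations are correct (indeed $\scrA^*\scrA(E)x = 0$ gives $x_*^*\scrA^*\scrA(E)x_* = \ip{\scrA(E)}{\scrA(x_*x_*^*)} = -\norm{\scrA(E)}^2$), but Part~\ref{lb:benign_gauss_p1} has a genuine gap at exactly the step you wave through. After testing the Hessian with $u = sx_* - \mu x$ and minimizing over $\mu$, your working inequality is
\[
2\norm{\scrA(E)}^2 \;\leq\; \norm{\scrA(M_s)}^2 - \frac{\ip{\scrA(M_s)}{\scrA(xx^*)}^2}{\norm{\scrA(xx^*)}^2},
\]
and to conclude anything you must \emph{lower}-bound the subtracted projection term, i.e.\ lower-bound $\abs{x_*^*\scrA^*\scrA(x_*x_*^*)x}$. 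Assumption~(a) only gives upper bounds on $\scrA^*\scrA(x_*x_*^*)$, and assumption~(b) says nothing about this cross term, so "controlled using assumption (a)" does not work: the hypotheses are compatible with this inner product being arbitrarily small. If you drop the projection term, what remains is $2\norm{\scrA(E)}^2 \leq 4(1+\moment+\delta_U)\,n\norm{x}^2\norm{x_*}^2$ (using $\norm{\scrA(M_s)}^2 \leq 4\,x^*\scrA^*\scrA(x_*x_*^*)x$), and combining with (b) in the variables $a=\norm{x}^2$, $b=\norm{x_*}^2$, $c=\abs{x^*x_*}^2$ yields no contradiction for, e.g., $a=b$ with $c$ slightly below $ab$, or even $c\approx 0$ — configurations with $E \neq 0$ that your inequality cannot exclude. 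So the asserted reduction to $[\moment^2+2\moment-2-3(\moment^2+2\moment)\delta_L-2(\moment+1)\delta_U]\cdot\Psi \leq 0$ is not established and, along this route, does not appear obtainable. The paper's mechanism is different in a way that matters: it tests the Hessian along the in-space residual $u = x_* - sx$ with $s$ chosen so that $u \perp x$ (projection in $\F^d$, not in $\R^n$), uses the PSD Cauchy--Schwarz bound $\tfrac12\norm{\scrA(xu^*+ux^*)}^2 \leq 2\ip{\scrA(xx^*)}{\scrA(uu^*)}$ together with the gradient condition to get $3\norm{\scrA(E)}^2 \leq 2\,u^*\scrA^*\scrA(x_*x_*^*)u \leq 2(1+\moment+\delta_U)n\norm{x_*}^2\cdot(1-\rho^2)\norm{x_*}^2$, and then cancels the factor $(1-\rho^2)$ against the lower bound $\normF{E}^2 \geq (\norm{x}^4+\norm{x_*}^4)(1-\rho^2)$; a one-variable quadratic minimization in $t=\norm{x}^2$ then produces precisely condition \eqref{eq:benign_gauss_cond}. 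The crucial difference is that the residual's squared norm $(1-\rho^2)\norm{x_*}^2$ is controlled exactly by elementary linear algebra, whereas your measurement-space projection requires spectral information about $\scrA^*\scrA(x_*x_*^*)$ that the assumptions do not provide.

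Part~\ref{lb:benign_gauss_p2} is acceptable in spirit: the paper proves the upper bound via a truncation-plus-covering concentration argument (its Lemma on $\scrA^*\scrA(Z_*)$), close to your matrix-Bernstein sketch, and for the uniform lower isometry it simply invokes Lemma~22 of Sun et al.\ rather than reproving it. Be aware, though, that the uniform lower bound over all $x$ at $n \gtrsim d\log d$ is itself a nontrivial result (naive nets plus scalar Bernstein on the heavy-tailed quartic terms is delicate), so your two-sentence sketch there is closer to a citation than a proof; the real deficiency of the proposal remains Part~\ref{lb:benign_gauss_p1}.
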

Part~\ref{lb:benign_gauss_p2} recovers the result of \cite{Cai2023}.
This was an improvement of the result of \cite{Sun2018}, which required $n \gtrsim d \log^3 d$
(see \Cref{sec:relwork_landscapes} for further discussion and related work).
The arguments in those papers rely on fact that the measurements are Gaussian.
The deterministic condition of part~\ref{lb:benign_gauss_p1} is novel and has the benefit of applying to more general measurement ensembles.

Note that in the real Gaussian measurement case, where $\fourthmoment = 2$,
condition \eqref{eq:benign_gauss_cond} can be simplified to
\[
	2 \delta_L + \delta_U < 1.
\]
In the complex Gaussian measurement case, where $\fourthmoment = 1$, condition \eqref{eq:benign_gauss_cond} becomes
\[
	9 \delta_L + 4 \delta_U < 1.
\]

Comparing the conditions on $\scrA$ in \Cref{thm:pr_benign_r1} with RIP and the expectation calculation \eqref{eq:expectation_gauss},
note that we have in part maintained the requirement for restricted \emph{lower} isometry ($\frac{1}{n} \norm{\scrA(S)}^2 \gtrsim \normF{S}^2$),
but the restricted \emph{upper} isometry condition ($\frac{1}{n} \norm{\scrA(S)}^2 \lesssim \normF{S}^2$), which is what fails in the given counterexample, has been relaxed.
Instead, we only require (upper) concentration of $\scrA^* \scrA$ on the specific input $x_* x_*^*$,
which is easier to obtain when $x_*$ is fixed independently of the random measurement vectors $a_1, \dots, a_n$.

Unfortunately, the conditions of \Cref{thm:pr_benign_r1} are still unreasonably strong in many cases.
The deterministic part~\ref{lb:benign_gauss_p1} is quite sensitive to the (almost-Gaussian) structure of $\scrA$,
and, even in the (often idealized) Gaussian case,
the sample requirement $n \gtrsim d \log d$ of part~\ref{lb:benign_gauss_p2}
(which cannot be improved if we want to satisfy the conditions of part~\ref{lb:benign_gauss_p1})
is suboptimal by a logarithmic factor compared to the best-possible sample complexity $n \approx d$ that is achieved by other methods such as PhaseLift \cite{Candes2013} or various other, more elaborate schemes (see \cite{Dong2023,Fannjiang2020} for an overview).
There is some evidence that the landscape of \eqref{eq:opt_gen} can be benign even with $n \approx d$ Gaussian measurements \cite{SaraoMannelli2020}, but this has not been proved rigorously.

Thus more work is needed to show that ordinary least-squares optimization (without, e.g., worrying about initialization) can solve phase retrieval with flexible measurement assumptions and optimal sample complexity.

\subsection{The benefits and pitfalls of overparametrization}
\label{sec:intro_overp}
Qualitatively, the condition on $\scrA$ in \Cref{thm:pr_benign_r1}, part~\ref{lb:benign_gauss_p1} can be interpreted as a \emph{condition number} requirement.
Similar requirements appear elsewhere in nonconvex matrix optimization;
in particular, benign landscape results assuming the restricted isometry property (RIP) require the upper and lower isometry constants to be not too different.
See, for example, \cite{Bi2022} and the further references therein.
However, a poor condition number can be mitigated in part by \emph{overparametrizing} the optimization problem, that is, setting the optimization rank $p$ to be strictly larger than the rank $r$ of the ground truth or global optimum.
In matrix sensing under RIP assumptions, this idea appears in the recent works \cite{Ma2023b,Zhang2025c,Zhang2025a}.
The idea also appears in other, structurally quite different examples of low-rank matrix optimization: see \Cref{sec:relwork_overp}.

\begin{figure*}
	\centering
	\begin{subfigure}[t]{0.3\linewidth}
		\centering
		\includegraphics[width=0.9\linewidth]{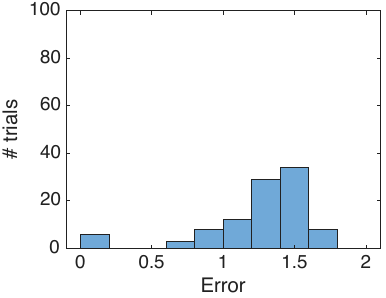}
		\caption{$\F = \R$, $n = 3d$, $p = 1$}
	\end{subfigure}
	\begin{subfigure}[t]{0.3\linewidth}
		\centering
		\includegraphics[width=0.9\linewidth]{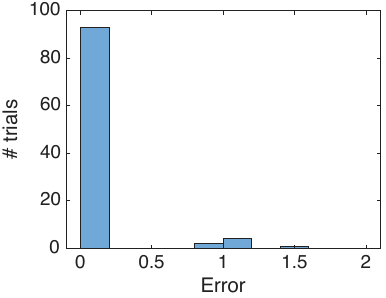}
		\caption{$\F = \R$, $n = 3d$, $p = 2$}
	\end{subfigure}
	\begin{subfigure}[t]{0.3\linewidth}
		\centering
		\includegraphics[width=0.9\linewidth]{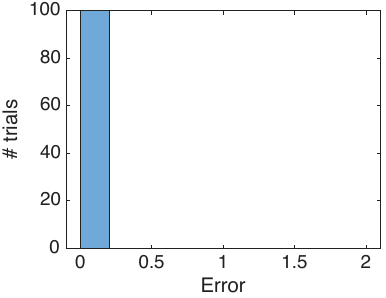}
		\caption{$\F = \R$, $n = 3d$, $p = 3$}
	\end{subfigure} \\[3ex]
	\begin{subfigure}[t]{0.3\linewidth}
		\centering
		\includegraphics[width=0.9\linewidth]{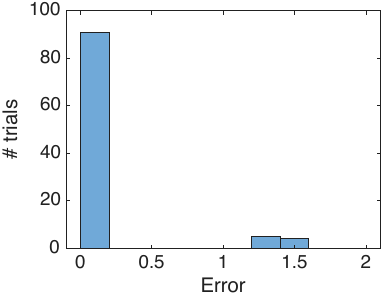}
		\caption{$\F = \C$, $n = 5d$, $p = 1$}
	\end{subfigure}
	\begin{subfigure}[t]{0.3\linewidth}
		\centering
		\includegraphics[width=0.9\linewidth]{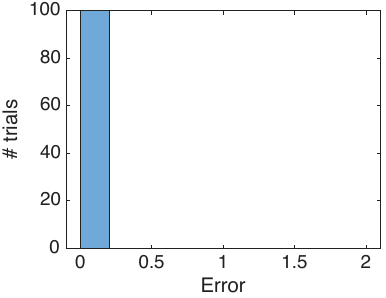}
		\caption{$\F = \C$, $n = 5d$, $p = 2$}
	\end{subfigure}
	\begin{subfigure}[t]{0.3\linewidth}
		\centering
		\includegraphics[width=0.9\linewidth]{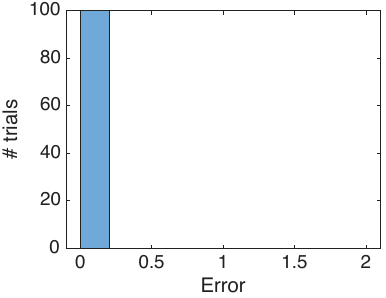}
		\caption{$\F = \C$, $n = 5d$, $p = 3$}
	\end{subfigure}
	\caption{Error histograms of 100 random trials with $d = 50$, $\norm{x_*} = 1$, and standard Gaussian (in $\F$) measurements. For each row of the figure, we generate 100 independent problems, and we then attempt to solve \eqref{eq:opt_gen} for each $p \in \{1, 2, 3\}$ (with independent random initializations). We use a trust-region solver from the Manopt library \cite{Boumal2014a}. For the computed $X \in \F^{d \times p}$, the calculated error is $\min_{\abs{s} = 1} \norm{\xhat - s x_*}$, where $\xhat \xhat^*$ is the closest rank-1 approximation to $X X^*$.}
	\label{fig:exps}
\end{figure*}

The practical benefits of overparametrization are immediately clear from numerical experiments. \Cref{fig:exps} shows error histograms for direct local optimization of \eqref{eq:opt_gen} (with random initialization) for phase retrieval problems with standard Gaussian measurements and small oversampling ratios $n/d$.
The benefits of setting $p > 1$ are clear in both the real and complex cases and are particularly dramatic in the real case.
In both cases, we have a 100\% exact recovery rate for $p = 3$, while smaller values of $p$ fail on some of the same problem instances.

With the analysis framework in this paper,
we can indeed show that overparametrization brings benefits for our problem,
allowing us, in part, to relax the conditions of \Cref{thm:pr_benign_r1}.
The following result is a direct generalization of \Cref{thm:pr_benign_r1} to the overparametrized case.
It has several shortcomings that we will mention and that will be resolved in our later results,
but we include it because it illustrates well the effect of overparametrization and because it applies to small values of $p$ (even $p = 1$).
\newcommand{\lowergaussTR}{\beta}
\newcommand{\lowergaussF}{\alpha}
\begin{theorem}
	\label{thm:pr_benign_overp}
	Consider the model \eqref{eq:model_gen} with rank-one $Z_* = x_* x_*^*$ and $\xi = 0$.
	For rank parameter $p \geq 1$, consider the nonconvex least-squares problem \eqref{eq:opt_gen}.
	Suppose, for some constants $\lowergaussF, \lowergaussTR, L \geq 0$, we have,
	for all $X \in \F^{d \times p}$,
	\begin{equation}
		\label{eq:pr_overp_lb}
		\begin{aligned}
			\frac{1}{n} \norm{\scrA(X X^* - x_* x_*^*)}^2
			&\geq \lowergaussF \normF{X X^* - x_* x_*^*}^2 \\
			&\qquad + \lowergaussTR(\normF{X}^2 - \norm{x_*}^2)^2,
		\end{aligned}
	\end{equation}
	and
	\begin{equation}
		\label{eq:pr_overp_ub}
		\frac{1}{n} \scrA^* \scrA(x_* x_*^*) \preceq L \norm{x_*}^2 I_d.
	\end{equation}
	Then, if
	\begin{equation}
		\label{eq:pr_overp_cond}
		(p+2)\parens*{1 + \frac{\lowergaussTR}{p \lowergaussTR + \lowergaussF}}\lowergaussF > 2 L,
	\end{equation}
	every second-order critical point $X$ of \eqref{eq:opt_gen} satisfies $X X^* = x_* x_*^*$.
\end{theorem}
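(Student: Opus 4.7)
The plan is to compute the first- and second-order stationarity conditions for $f_p$ at $X$, write everything in terms of the error matrix $E := X X^* - x_* x_*^*$, and extract scalar inequalities from the PSD Hessian condition that force $E = 0$ under \eqref{eq:pr_overp_cond}. Routine calculation gives
\[
	\nabla f_p(X) = 4 \scrA^* \scrA(E) X, \qquad \nabla^2 f_p(X)[V, V] = 2 \norm{\scrA(V X^* + X V^*)}^2 + 4 \ip{\scrA^* \scrA(E)}{V V^*}.
\]
First-order stationarity $\scrA^* \scrA(E) X = 0$ yields $\ip{\scrA^* \scrA(E)}{X X^*} = 0$, from which expanding $\scrA(X X^*) = \scrA(E) + \scrA(x_* x_*^*)$ gives the two identities $\ip{\scrA^* \scrA(E)}{x_* x_*^*} = -\norm{\scrA(E)}^2$ and $\norm{\scrA(X X^*)}^2 = \norm{\scrA(x_* x_*^*)}^2 - \norm{\scrA(E)}^2$.

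The main information comes from testing the Hessian-PSD condition along the family $V_j = x_* q_j^*$, where $\{q_j\}_{j=1}^p$ is an orthonormal basis of $\F^p$. Setting $x_j = X q_j$, one has $V_j X^* + X V_j^* = x_* x_j^* + x_j x_*^*$ and $V_j V_j^* = x_* x_*^*$, so summing the $p$ Hessian inequalities and using $\ip{\scrA^* \scrA(E)}{x_* x_*^*} = -\norm{\scrA(E)}^2$ yields
\[
	\sum_{j=1}^p \norm{\scrA(x_* x_j^* + x_j x_*^*)}^2 \geq 2 p \norm{\scrA(E)}^2.
\]
Because each $A_i$ is PSD, the entrywise Cauchy-Schwarz bound $\abs{\ip{A_i}{x_* x_j^* + x_j x_*^*}}^2 \leq 4 \ip{A_i}{x_* x_*^*} \ip{A_i}{x_j x_j^*}$ upper bounds the left-hand side by $4 \ip{\scrA^* \scrA(x_* x_*^*)}{X X^*}$; by the first-order identity this equals $4 \norm{\scrA(X X^*)}^2 = 4(\norm{\scrA(x_* x_*^*)}^2 - \norm{\scrA(E)}^2)$. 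Rearranging, using \eqref{eq:pr_overp_ub} to bound $\norm{\scrA(x_* x_*^*)}^2 \leq n L \norm{x_*}^4$, and invoking \eqref{eq:pr_overp_lb} produces the scalar inequality
\[
	\alpha \normF{E}^2 + \beta (\normF{X}^2 - \norm{x_*}^2)^2 \leq \frac{2 L \norm{x_*}^4}{p + 2}.
\]

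Closing the argument from this inequality is the technical crux. The rank-deficient case $\rank X < p$ is easy: for nonzero $v \in \ker X$, the test direction $V = x_* v^*$ satisfies $V X^* + X V^* = 0$ and $V V^* = \norm{v}^2 x_* x_*^*$, so the Hessian condition collapses to $-\norm{v}^2 \norm{\scrA(E)}^2 \geq 0$, forcing $\scrA(E) = 0$ and hence $E = 0$ by \eqref{eq:pr_overp_lb} (which requires $\alpha > 0$, implicit in \eqref{eq:pr_overp_cond}). The full-column-rank case $\rank X = p$ is the main obstacle: I would combine the preceding scalar inequality with the rank bound $\normF{E}^2 \geq (\normF{X}^2 - \norm{x_*}^2)^2/(p + 1)$ (coming from $\rank E \leq p + 1$ and $\tr E = \normF{X}^2 - \norm{x_*}^2$) and a complementary Hessian test direction, most plausibly of the form $V = \lambda X + x_* c^*$ with $c = X^\dagger x_*$ and $\lambda$ optimized through the discriminant of the resulting quadratic in $\lambda$ (the terms $\ip{\scrA^* \scrA(E)}{X(\cdot)} $ vanish by first-order stationarity, keeping the computation clean). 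Tracking the coefficients carefully through this combination should produce exactly the factor $(p + 2)(1 + \beta/(p \beta + \alpha)) \alpha$ on $2 L$ in \eqref{eq:pr_overp_cond}, yielding a contradiction with the hypothesis unless $E = 0$.
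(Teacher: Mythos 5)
Your calculations up through the scalar inequality are correct, but note what they amount to: testing the Hessian along $V_j = x_* q_j^*$ and summing reproduces exactly the paper's Lemma~\ref{lem:basic_ineq} with the \emph{suboptimal} choice $R = 0$, giving $\norm{\scrA(E)}^2 \leq \tfrac{2}{p+2}\norm{\scrA(x_* x_*^*)}^2$ and hence
\begin{equation*}
	\alpha \normF{E}^2 + \beta(\normF{X}^2 - \norm{x_*}^2)^2 \;\leq\; \frac{2L\norm{x_*}^4}{p+2}, \qquad E = XX^* - x_* x_*^*.
\end{equation*}
This is only an a priori error bound: its right-hand side does not shrink as $XX^*$ approaches $x_* x_*^*$, so no further manipulation of this inequality alone (in particular not the rank bound $\normF{E}^2 \geq (\tr E)^2/(p+1)$, which is vacuous at $\normF{X}^2 = \norm{x_*}^2$) can force $E = 0$; it is perfectly consistent with critical points at small nonzero error. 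The paper's proof differs precisely here: it invokes Lemma~\ref{lem:basic_ineq} with the \emph{optimal} $R$ (equivalently, Hessian directions built from the residual $x_* - X X^\dagger x_*$ rather than from $x_*$ itself), which places the factor $\norm{x_* - X X^\dagger x_*}^2 = (1-\rho^2)\norm{x_*}^2$ on the right-hand side, where $\rho^2 = \norm{P_X x_*}^2/\norm{x_*}^2$. Combined with the orthogonal decomposition $\normF{E}^2 = \normF{X(X - x_* u^*)^*}^2 + \norm{x_*}^2\norm{x_* - Xu}^2$ (for $u = X^\dagger x_*$), the bound $\normF{X(X - x_* u^*)^*}^2 \geq (1-\rho^2)\normF{XX^*}^2$, and $\normF{X}^2 \leq \sqrt{p}\,\normF{XX^*}$, one divides by $1-\rho^2$ and minimizes the resulting quadratic in $t = \normF{X}^2$; the $(1-\rho^2)$ factor appearing on \emph{both} sides is what converts an error bound into exact recovery and produces exactly condition \eqref{eq:pr_overp_cond}.

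So the genuine gap is the full-column-rank case, which you acknowledge is the crux but only sketch. The proposed extra direction $V = \lambda X + x_* c^*$ with $c = X^\dagger x_*$ is unverified, and there is no evidence it closes the gap: by first-order stationarity the terms it contributes beyond your existing inequality reduce again to quantities like $\norm{\scrA(E)}^2$ and cross terms with $\scrA(XX^*)$, $\scrA(x_*(P_X x_*)^*)$, and it is not clear how the specific combination $(p+2)\bigl(1 + \tfrac{\beta}{p\beta + \alpha}\bigr)\alpha > 2L$ would emerge, since that threshold arises in the paper from the minimization over $t$ \emph{after} dividing by $1-\rho^2$ --- a step your framework has no access to because your right-hand side lacks the residual factor. (Your rank-deficient observation, that $v \in \ker X$ forces $\scrA(E) = 0$ and hence $E = 0$ since \eqref{eq:pr_overp_cond} implies $\alpha > 0$, is correct and a nice shortcut, but the full-rank case is exactly where the theorem's content lies.) To repair the proof, replace the test directions $x_* q_j^*$ by $(x_* - XR)q_j^*$ with $R = X^\dagger x_*$, i.e., use Lemma~\ref{lem:basic_ineq} as stated with the optimal $R$, and then follow the decomposition-and-minimization argument above.
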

We prove this in \Cref{sec:proof_gausslike}.
\Cref{thm:pr_benign_r1}, part~\ref{lb:benign_gauss_p1} is indeed a special case of this.
We have again ignored noise for simplicity; in addition, the assumptions as stated are not well-suited for proving optimal error rates with respect to noise. Our other results will handle noise.

It is tempting to say that, as long as $\lowergaussF > 0$, we can always make $p$ large enough so that condition \eqref{eq:pr_overp_cond} is satisfied.
In some cases, this may be true (see, e.g., \cite{Criscitiello2025preprint}).
However, the lower isometry condition \eqref{eq:pr_overp_lb} depends on $p$
in that it must hold for all $X \in \F^{d \times p}$.
For Gaussian measurements,
attempting to prove an inequality like \eqref{eq:pr_overp_lb} with the same methods used by \cite{Sun2018} to prove a similar result for $p = 1$ (see \Cref{lem:gauss_lower_direct} in \Cref{sec:proof_gausslike}) would require $n \gtrsim p d$,
which would defeat any sample-complexity benefit of \Cref{thm:pr_benign_overp} over \Cref{thm:pr_benign_r1}.

For general low-rank matrix sensing problems, the implicit dependence of the lower isometry constants on the optimization rank $p$ is a fundamental limitation of the nonconvex optimization approach.
For example, even with the stronger assumption of RIP, Richard Zhang has given a counterexample (private correspondence; a version appears in \cite{Zhang2025c}) showing that, with excessive overparametrization,
the problem \eqref{eq:opt_gen} may have spurious local minima very far from the ground truth
even when, for $p$ closer to $r$, RIP holds and the landscape is indeed benign.

However, in our case, the fact that both the ground truth $Z_*$ and our measurement matrices $\{A_i\}_i$ are PSD
allows us to overcome this limitation.

\subsection{PSD measurements and universal lower isometry}
\newcommand{\crdvar}{w}
\newcommand{\incohp}{\mu}
\label{sec:intro_dualcerts}
To reap the full benefits of a result like \Cref{thm:pr_benign_overp},
we need the measurement operator $\scrA$ to satisfy lower isometry in a relatively \emph{un}restricted sense:
we want, for any $Z \succeq 0$ (even of high rank),
\[
	\frac{1}{n}\norm{\scrA(Z - Z_*)}^2 \geq \lowergaussF \normF{Z - Z_*}^2
\]
for some $\lowergaussF > 0$.
As discussed above, this is, in general, too much to ask even if $\scrA$ has RIP for suitably small ranks.
However, the PSD structure of $Z$, $Z_*$, and the measurement matrices $\{A_i\}_i$ allows us to do more.
Indeed, this has already been observed and studied (see below) for certain variants of the convex relaxation \eqref{eq:phaselift}
for which there is no rank restriction or penalization.

At a high level, the argument goes as follows.
Given $Z \succeq 0$, we can decompose the error $H = Z - Z_*$ into two components.
We write $H = H_1 + H_2$,
where $H_1$ indeed has low rank (of order $r = \rank(Z_*)$),
and $H_2$ may have large rank but is PSD, that is, $H_2 \succeq 0$ (see, e.g. \Cref{sec:dualcerts} for a principled way to do this).

As $H_1$ has low rank,
we can reasonably hope to show that $\frac{1}{n}\norm{\scrA(H_1)}^2 \gtrsim \normF{H_1}^2$.
Furthermore, as $H_2$ and the matrices $A_i$ are PSD, we have
\begin{align*}
	\frac{1}{\sqrt{n}} \norm{\scrA(H_2)}
	&\geq \frac{1}{n} \sum_{i=1}^n \abs{\ip{A_i}{H_2}} \\
	&= \ip*{ \frac{1}{n} \sum_{i=1}^n A_i }{H_2}.
\end{align*}
The equality holds because $\ip{A_i}{H_2} \geq 0$.
If, for example, $A_i = a_i a_i^*$ for i.i.d.\ standard Gaussian $a_i$,
standard concentration inequalities imply that, when $n \gtrsim d$, $\frac{1}{n} \sum_{i=1}^n A_i \succsim I_d$, in which case we obtain
\[
	\frac{1}{n} \norm{\scrA(H_2)}^2 \gtrsim \tr^2(H_2) = \nucnorm{H_2}^2 \geq \normF{H_2}^2,
\]
where $\tr(\cdot)$ and $\nucnorm{\cdot}$ respectively denote the matrix trace and nuclear norm.%
\footnote{The trace/nuclear norm term appearing here can be interpreted as ``implicit regularization'' arising from the semidefinite problem structure; see the discussion after \Cref{thm:gauss_final_nl}.}

It is thus relatively straightforward to show that, with $H = H_1 + H_2$, $\frac{1}{n}\norm{\scrA(H_1)}^2 \gtrsim \normF{H_1}^2$ and $\frac{1}{n}\norm{\scrA(H_2)}^2 \gtrsim \normF{H_2}^2$.
However, showing that we can ``combine'' these to obtain $\frac{1}{n}\norm{\scrA(H)}^2 \gtrsim \normF{H}^2$
is quite technical and requires additional tools.

\subsubsection{Phase retrieval with general sub-Gaussian measurements}
\label{sec:intro_subG}
We consider, in this work, two separate approaches to proving such lower isometry.
One is based on the work of Krahmer and Stöger \cite{Krahmer2020},
who consider the case of ordinary phase retrieval ($r = 1$).
This analysis framework allows for more general \emph{sub}-Gaussian measurements.
Specifically, we assume the entries of the measurement vectors $a_i$ are i.i.d.\ copies of a random variable $\crdvar$ which we assume to be zero-mean and to satisfy (without loss of generality) $\E \abs{\crdvar}^2 = 1$.
We also assume that $\crdvar$ is sub-Gaussian with parameter $K$ in the sense that\footnote{This is one of several equivalent (within constants) definitions of $K$--sub-Gaussianity. See, for example, \cite[Sec.~2.5]{Vershynin2018}.} $\E e^{\abs{w}^2/K^2} \leq 2$.
As noted by \cite{Krahmer2018,Krahmer2020}, certain moments of $\crdvar$ are critical for our ability to do phase retrieval with such measurements:
\begin{itemize}
	\item If $\E \abs{\crdvar}^4 = 1$, or, equivalently, $\abs{\crdvar} = 1$ almost surely, then the standard basis vectors of $\F^d$ will be indistinguishable under these measurements.
	In that case, we must assume that the ground truth $x_*$ is not too ``peaky'' (i.e., that it is \emph{incoherent} with respect to the standard basis).
	\item If $x_*$ is complex, and $\abs{\E \crdvar^2} = 1$ (i.e., almost surely, $\crdvar = s v$ for some fixed $s \in \C$ and a \emph{real} random variable $v$), then $x_*$ and its elementwise complex conjugate $\xbr_*$ will be indistinguishable.
	We must therefore rule out this case.
\end{itemize}
We can plug the lower isometry bounds of \cite{Krahmer2020} into our theory to obtain the following result (see \Cref{sec:subG_proofs} for details):
\begin{theorem}
	\label{thm:subG_final}
	Consider the model \eqref{eq:model_gen} with rank-one $Z_* = x_* x_*^*$ for nonzero $x_* \in \F^d$.
	Suppose $A_i = a_i a_i^*$, where $a_1, \dots, a_n$ are i.i.d.\ random vectors whose entries are i.i.d.\ copies of a random variable $\crdvar$.
	If $\F = \R$ but $w$ is complex, we can take $A_i = \real(a_i a_i^*)$.
	
	There exists a universal constant $\incohp > 0$ such that the following is true.
	Suppose $\E \abs{\crdvar}^2 = 1$, $\crdvar$ is $K$--sub-Gaussian, and at least one of the following two statements holds:
	\begin{enumerate}
		\item $\E \abs{\crdvar}^4 > 1$, or
		\item $\norm{x_*}_\infty \leq \incohp \norm{x_*}$.
	\end{enumerate}
	Furthermore, if $\F = \C$, assume that $\abs{\E \crdvar^2} < 1$.
	Then there exist $c_1, c_2, c_3, c_4 > 0$ depending only on the properties of $\crdvar$ (not on the dimension $d$) such that, if $n \geq c_1 d$, with probability at least $1 - c_2 n^{-2}$,
	for all
	\[
		p \geq c_3\parens*{ 1 + \frac{d \log d}{n} + \frac{\opnorm{\scrA^*(\xi)}}{n \norm{x_*}^2} },
	\]
	every second-order critical point $X$ of \eqref{eq:opt_gen} satisfies
	\[
		\normF{X X^* - x_* x_*^*} \leq \nucnorm{X X^* - x_* x_*^*} \leq c_4 \frac{\opnorm{\scrA^*(\xi)}}{n}.
	\]
\end{theorem}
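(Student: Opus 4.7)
The plan is to combine a refined version of the overparametrized landscape argument behind \Cref{thm:pr_benign_overp} with the sub-Gaussian restricted isometry of Krahmer and Stöger~\cite{Krahmer2020} and the PSD-cone trick sketched in \Cref{sec:intro_dualcerts}. Conceptually, the argument has three ingredients: an inequality derived from second-order stationarity, a ``universal'' lower isometry for $\scrA$ on differences of PSD matrices, and a way of combining the two while tracking noise.

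First I would evaluate the second-order critical point conditions $\nabla f_p(X) = 0$ and $\nabla^2 f_p(X) \succeq 0$. Because $p$ exceeds the ground-truth rank, $X$ has an excess column space, and I can choose Hessian test directions $\dot X$ (or an average of such directions) supported in the orthogonal complement of $\operatorname{range}(X)$, exactly as in the proof of \Cref{thm:pr_benign_overp}. Using the gradient equation $\scrA^*(\scrA(XX^*) - y) X = 0$ to eliminate first-order terms, this should yield, with $H = XX^* - x_*x_*^*$, a schematic bound of the form
\[
	C(p)\normF{H}^2 \lesssim \frac{1}{n}\norm{\scrA(H)}^2 + \frac{\opnorm{\scrA^*(\xi)}}{n}\normF{H},
\]
where $C(p)$ grows with $p$ modulo a competing contribution involving $\frac{1}{n}\opnorm{\scrA^*\scrA(x_*x_*^*)}$. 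For sub-Gaussian $a_i$ this last quantity concentrates near a constant multiple of $\norm{x_*}^2$, so the lower bound on $p$ in the theorem is precisely what is needed for $C(p)$ to dominate both the deterministic ``bad'' term and the noise term.

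Next I would establish the matching lower isometry $\frac{1}{n}\norm{\scrA(H)}^2 \gtrsim \normF{H}^2$ along the lines of \Cref{sec:intro_dualcerts}. Decomposing $H = H_1 + H_2$ with $H_1 \in \T$ of rank at most $2r$ and $H_2 \in \Tp$, $H_2 \succeq 0$, the Krahmer-Stöger bound gives $\frac{1}{n}\norm{\scrA(H_1)}^2 \gtrsim \normF{H_1}^2$ for $n \gtrsim d$ under the stated sub-Gaussian / incoherence / non-real-conjugacy hypotheses, while positivity of $\ip{A_i}{H_2}$ together with the concentration $\frac{1}{n}\sum_i A_i \succsim I_d$ yields $\frac{1}{n}\norm{\scrA(H_2)}^2 \gtrsim \nucnorm{H_2}^2 \geq \normF{H_2}^2$.

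The hard part is merging these two one-sided bounds into a global inequality, since the cross term $\ip{\scrA(H_1)}{\scrA(H_2)}$ is a priori unsigned and comparable in size to the diagonal pieces. My plan is to construct a convex dual certificate $Y = \scrA^*(v) \in \herms_d$ with $\PT(Y)$ close to a normalized version of $H_1$ and $\opnorm{\PTp(Y)}$ small, via a golfing-type scheme of the kind familiar from PhaseLift analyses; combining the identity $\ip{Y}{H} = \ip{v}{\scrA(H)}$ with the estimate $\abs{\ip{\PTp(Y)}{H_2}} \leq \opnorm{\PTp(Y)} \nucnorm{H_2}$ then controls the cross term and produces a uniform lower isometry on PSD differences. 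Substituting back into the SOSP inequality from the first step gives $\normF{H} \lesssim \opnorm{\scrA^*(\xi)}/n$; the nuclear-norm bound then follows because $\nucnorm{H_1} \leq \sqrt{2r}\normF{H_1}$ and $\nucnorm{H_2} = \tr(H_2)$ are both already controlled by the same chain of inequalities. The noise contribution in the required lower bound on $p$ reflects precisely the slack that $C(p)$ must carry to absorb the $\frac{\opnorm{\scrA^*(\xi)}}{n}\normF{H}$ term in the SOSP inequality when $\xi \neq 0$.
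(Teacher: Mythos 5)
Your overall skeleton (second-order-criticality inequality, a lower isometry over the PSD cone that is uniform in the error, then absorbing the $\scrA^*\scrA(x_*x_*^*)$ and noise terms by taking $p$ large) matches the paper's strategy, and your first and last steps are essentially \Cref{lem:basic_ineq}, \Cref{lem:AZ_conc}, and the choice of $u$ minimizing $\norm{x_* - Xu}$ followed by the absorption argument. The genuine gap is in what you call the hard part. The paper never needs to merge $\frac1n\norm{\scrA(H_1)}^2 \gtrsim \normF{H_1}^2$ and $\frac1n\norm{\scrA(H_2)}^2 \gtrsim \normF{H_2}^2$ itself: it imports from Krahmer and Stöger (their Lemmas 3--5, summarized as \Cref{lem:iso_subG}) the already-merged statement $\frac1{\sqrt n}\norm{\scrA(Z - Z_*)} \geq \frac1n\norm{\scrA(Z - Z_*)}_1 \geq c\,\nucnorm{Z - Z_*}$ uniformly over all $Z \succeq 0$, valid with $n \gtrsim d$ under exactly the moment/incoherence/non-real-conjugacy hypotheses of the theorem. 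You instead propose to re-derive the merged bound by constructing a golfing-type dual certificate for general sub-Gaussian measurements and using it to control the cross term. That construction is precisely the step the paper deliberately avoids because it is not available: Krahmer and Liu carried out a sub-Gaussian dual certificate only in the real case, the complex case (which the theorem covers, with the $\abs{\E\crdvar^2} < 1$ condition) is not in the literature, and under the weak assumptions allowed here (e.g.\ symmetric Bernoulli entries, where only the incoherence hypothesis $\norm{x_*}_\infty \leq \incohp\norm{x_*}$ rescues identifiability) the standard PhaseLift golfing arguments do not transfer. Asserting "a golfing-type scheme of the kind familiar from PhaseLift analyses" does not close this.

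A secondary problem with the same step: the certificate you describe, with $\PT(Y)$ close to a normalized version of $H_1$, is not what golfing produces. Golfing yields a certificate aligned with the fixed tangent space of the ground truth (as in \Cref{assump:gen_dual}: $\PTp(Y) \succeq \Ppmat$, $\normF{\PT(Y)}$ small); demanding a certificate that tracks the data-dependent tangent component $H_1$ of an arbitrary second-order critical point is a uniform-over-$\T$ requirement that is strictly stronger and would need a separate argument. If you want to keep a dual-certificate route for this theorem you would essentially have to reprove a complex sub-Gaussian analogue of \Cref{lem:gauss_dualcert}; the shorter correct path is to use \Cref{lem:iso_subG} as the uniform lower isometry and feed it directly into the inequality from \Cref{lem:basic_ineq}, which is what the paper does.
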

We see that, even with $n$ of order $d$ (vs.\ $n \gtrsim d \log d$ as required by \Cref{thm:pr_benign_r1}),
we can obtain a benign landscape by choosing $p \approx \log d$.
In terms of computational scaling, this is an improvement over the results of \cite{Krahmer2020}, which only proved exact recovery for a variant of \eqref{eq:phaselift}.
For the noise term, see the discussion after \Cref{thm:gauss_final_nl} below.
See \Cref{sec:relwork_subG} for further relevant literature.

We have so far been unable to extend this analysis approach to larger ground truth ranks without introducing a suboptimal dependence on the rank $r$.
We thus, in addition, consider another (and older) method.

\subsubsection{Dual certificate approach with application to Gaussian measurements}
Our other analysis technique is a \emph{dual certificate} approach similar to that introduced by \cite{Candes2013,Demanet2014} to analyze a variant of \eqref{eq:phaselift} for phase retrieval.
We defer the details to \Cref{sec:dualcerts}.
A deterministic landscape result similar to \Cref{thm:pr_benign_overp} is given as \Cref{thm:dual_thm} in that section.
This result allows for measurement noise.

As an example application of the dual certificate approach, we consider again rank-1 Gaussian measurements.
Although this approach could likely be adapted to the more general sub-Gaussian measurements of \Cref{thm:subG_final} (as is done in the real, $r = 1$ case by \cite{Krahmer2018}), the dual certificate construction and analysis become more complicated, so for brevity we do not explore this further.
For $Z_*$ of rank $r$, we denote its nonzero eigenvalues by $\lambda_1(Z_*) \geq \cdots \geq \lambda_r(Z_*)$.
\begin{theorem}
	\label{thm:gauss_final_nl}
	Consider the model \eqref{eq:model_gen} with fixed rank-$r$ $Z_* \succeq 0$.
	Suppose $A_i = a_i a_i^*$ for i.i.d.\ standard (real or complex) Gaussian vectors $a_1, \dots, a_n$ (if $\F = \R$ but the measurements are complex, we can take $A_i = \real(a_i a_i^*)$).
	
	For universal constants $c_1, c_2, c_3, c_4 > 0$,
	if $n \geq c_1 r d$, then, with probability at least $1 - c_2 n^{-2}$,
	for all optimization ranks
	\[
		p \geq  c_3 \frac{(1 + \frac{d \log d}{n}) \tr Z_* + \frac{1}{n} \opnorm{\scrA^*(\xi)}}{\lambda_r(Z_*)},
	\]
	every second-order critical point $X$ of \eqref{eq:opt_gen} satisfies
	\[
		\normF{X X^* - Z_*} \leq c_4 \sqrt{r} \frac{\opnorm{\scrA^*(\xi)}}{n}.
	\]
\end{theorem}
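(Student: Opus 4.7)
The plan is to reduce the statement to the deterministic dual-certificate landscape result of \Cref{thm:dual_thm}, and then verify each of its hypotheses for rank-one Gaussian measurements using standard random matrix tools. I expect \Cref{thm:dual_thm} to require, with high probability, four ingredients: (i) a lower-isometry-type bound $\frac{1}{n}\norm{\scrA(H)}^2 \gtrsim \normF{H}^2$ valid on the \emph{full} cone of admissible error matrices $H = XX^* - Z_*$ (not just low-rank ones), handled through the tangent / PSD-orthogonal decomposition sketched in \Cref{sec:intro_dualcerts}; (ii) a convex dual certificate $Y = \scrA^*(\lambda)$ with $\PT(Y) \approx \Pmat$ and $\opnorm{\PTp(Y)}$ strictly below $1$; (iii) an upper operator-norm bound on $\frac{1}{n}\scrA^*\scrA(Z_*)$; and (iv) direct treatment of the noise term $\frac{1}{n}\opnorm{\scrA^*(\xi)}$, which should pass through unchanged. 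The rank threshold coming out of \Cref{thm:dual_thm} should be, up to constants, the ratio of (iii)-plus-noise to $\lambda_r(Z_*)$.

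For Gaussian measurements three of these are routine. The tangent part of (i) follows from a Cand\`es--Sun-type restricted isometry on rank-$2r$ Hermitian matrices (via an $\varepsilon$-net plus matrix Bernstein), valid once $n \gtrsim rd$; the PSD-orthogonal part is precisely the short calculation in \Cref{sec:intro_dualcerts}, using the Bai--Yin bound $\frac{1}{n}\sum_i a_i a_i^* \succsim I_d$ for $n \gtrsim d$. Ingredient (iii), in the form $\opnorm{\frac{1}{n}\scrA^*\scrA(Z_*)} \lesssim (1 + \frac{d\log d}{n})\tr Z_* + \opnorm{Z_*}$, follows from matrix Bernstein applied to the independent sum $\frac{1}{n}\sum_i (a_i^* Z_* a_i)\, a_i a_i^*$, and the fluctuation contribution $\frac{d\log d}{n}\tr Z_*$ reappears verbatim in the stated rank threshold.

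The main technical obstacle is (ii) at the optimal sample complexity $n \gtrsim rd$ for general $r$. I would adapt Gross's \emph{golfing scheme}, as implemented for PhaseLift by Cand\`es--Li and for rank-$r$ Gaussian-type PSD sensing by Kueng--Rauhut--Terstiege: split the $n$ samples into $\Theta(\log d)$ batches and iteratively refine an approximate certificate, using matrix Bernstein on each batch to shrink the tangent residual $\normF{\PT(Y) - \Pmat}$ geometrically while keeping $\opnorm{\PTp(Y)}$ summably controlled. A semidefinite-specific subtlety is the trace offset in \eqref{eq:expectation_gauss}: since $\E \frac{1}{n}\scrA^*\scrA(\Pmat) = \fourthmoment \Pmat + r\, I_d$ rather than a scalar multiple of $\Pmat$, each batch increment must subtract off the trace contribution before feeding it into the next iteration. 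The final $Y$ has $\normF{\PT(Y) - \Pmat}$ exponentially small and $\opnorm{\PTp(Y)} \leq 1/2$, using $n \gtrsim rd$ total measurements.

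Stitching (i)--(iv) into \Cref{thm:dual_thm} yields the $p$-threshold directly as the ratio of (iii)-plus-noise to $\lambda_r(Z_*)$, matching the stated form. The final error bound $\normF{XX^* - Z_*} \lesssim \sqrt{r}\,\opnorm{\scrA^*(\xi)}/n$ is inherited from the natural operator-norm (or nuclear-norm) error control provided by \Cref{thm:dual_thm}, which upon conversion to Frobenius norm on a rank-$\leq 2r$ error picks up a factor $\sqrt{2r}$.
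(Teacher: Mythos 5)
Your top-level plan---reduce to \Cref{thm:dual_thm} and verify its hypotheses for Gaussian rank-one measurements---is the paper's plan, and your ingredient (iii) (control of $\opnorm{\scrA^*\scrA(Z_*)}$ via truncation/Bernstein, cf.\ \Cref{lem:AZ_conc}) is fine. But the way you propose to verify the two assumptions has genuine gaps. For \Cref{assump:gen_iso}, you prove a lower bound on the tangent part and a \emph{lower} bound on the PSD-orthogonal part (via $\frac1n\sum_i a_ia_i^*\succsim I_d$) and then treat the combination as the sketch of \Cref{sec:intro_dualcerts}; but that sketch is exactly the step the paper flags as the technical crux, and it does not assemble: for mixed $H=\HT+\HTp$ the triangle inequality goes the wrong way, and what \Cref{assump:gen_iso} actually needs is an \emph{upper} bound $\frac1n\norm{\scrA(\HTp)}_1\le\upperiso\tr\HTp$ so that $\frac1{\sqrt n}\norm{\scrA(H)}\ge\frac1n\norm{\scrA(\HT)}_1-\frac1n\norm{\scrA(\HTp)}_1\ge\loweriso\normF{\HT}-\upperiso\tr\HTp$. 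Moreover the easy version of that upper bound ($\upperiso\approx\opnorm{\frac1n\sum_iA_i}\approx1$) is not good enough at $n\approx rd$: in \Cref{thm:dual_thm} the threshold involves $(1+\upperiso\sqrt n\norm{\lambda})^2$ and the condition $\loweriso>\upperiso\epsilon$, and any certificate one can build at this sample size has $\epsilon$ and $\norm{\lambda}$ growing with $r$. The paper's resolution is \Cref{lem:gauss_lb_fronuc}, a single uniform bound $\frac1n\norm{\scrA(H)}_1\ge c_1\normF{H}-c_2\sqrt{(d+\log n)/n}\,\nucnorm{H}$ over \emph{all} Hermitian $H$ (small-ball plus symmetrization plus covering---note also that a two-sided rank-$2r$ RIP via matrix Bernstein, as you suggest for the tangent part, is unavailable for rank-one measurements), which yields \Cref{assump:gen_iso} with the crucially small $\upperiso\lesssim\sqrt{(d+\log n)/n}$ (\Cref{lem:gauss_iso}).

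For \Cref{assump:gen_dual}, the certificate you describe ($\PT(Y)\approx\Pmat$, $\opnorm{\PTp(Y)}\le 1/2$) is the nuclear-norm-minimization certificate, not the one the theorem requires ($\PTp(Y)\succeq\Ppmat$, $\normF{\PT(Y)}\le\epsilon$); this is convertible by adding a multiple of $\frac1n\sum_iA_i\approx I_d$ and rescaling, but you never perform that step. More importantly, \Cref{thm:dual_thm} depends quantitatively on $\norm{\lambda}$, which your golfing construction never controls---and this is not cosmetic, since both the rank threshold and the error constant contain $\sqrt n\norm{\lambda}$ terms. In addition, golfing with $\Theta(\log d)$ sample-split batches at total budget $n\approx rd$ leaves only $\approx rd/\log d$ samples per batch, so per-batch operator-norm control typically costs extra logarithmic factors, jeopardizing the claimed $n\gtrsim rd$. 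The paper sidesteps all of this with a one-shot truncated construction (\Cref{lem:gauss_dualcert}, generalizing Cand\`es--Li and Demanet--Hand) that only achieves the weak bounds $\epsilon\lesssim\sqrt{r^2(d+\log n)/n}$ (possibly of order $\sqrt r$) and $\norm{\lambda}\lesssim\sqrt{r/n}$, which suffice precisely because $\upperiso$ is small; an exponentially accurate certificate is unnecessary. Finally, your justification of the $\sqrt r$ in the error bound ("the error has rank $\le 2r$") is incorrect---$XX^*-Z_*$ can have rank up to $p+r$; the correct bookkeeping is $\nucnorm{H}\le\sqrt{2r}\,\normF{\HT}+\tr\HTp$ inside the proof of \Cref{thm:dual_thm}.
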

The dependence of the error bound on the noise $\xi$ and the ground truth rank $r$ is identical to classical results in low-rank matrix sensing and is, in some cases, minimax-optimal.
See, for example, \cite{Candes2011b,Negahban2011,Rohde2011,Cai2015}.
Usually, however, without a hard estimator rank constraint, one must include a low-rank--inducing regularizer (e.g., trace/nuclear norm) to get such optimal dependence on $r$.
The fact that we obtain this without any explicit regularizer illustrates the ``implicit regularization'' of the semidefinite problem structure (see the relevant footnote above).

In the case $r = 1$ and $\xi = 0$, the result matches that of \Cref{thm:subG_final}.
For general $r$, assuming for simplicity that $n \gtrsim d \log d$ and $\xi = 0$, the optimization rank condition becomes $p \gtrsim \frac{\tr Z_*}{\lambda_r(Z_*)}$.
This is satisfied, for example, when $p \gtrsim \kappa r$, where $\kappa = \lambda_1(Z_*)/\lambda_r(Z_*)$.
Thus we see that yet another ``condition number'' appears in a requirement on $p$.
It is not clear whether the dependence on the eigenvalues $Z_*$ is tight;
related works assuming RIP (e.g., \cite{Zhang2025c}) do not have such a dependence,
but relaxing the RIP assumption as we do requires quite different proof techniques.

The most comparable existing theoretical guarantees for a nonconvex approach to \eqref{eq:opt_gen} in this setting are those of Li et al.\ \cite{Li2021a},
who study an initialization plus gradient descent algorithm.
They require $n \gtrsim \kappa^3 r^4 d \log d$ Gaussian measurements,
whereas the statistically optimal sample complexity obtained by \Cref{thm:gauss_final_nl} is $n \approx r d$.
In \Cref{thm:gauss_final_nl}, a dependence on the eigenvalues of $Z_*$ and a factor of $\log d$ only appear in the requirement on the optimization rank $p$,
which affects computational but not statistical complexity.

\subsection{Potential future directions}

\subsubsection*{Computational complexity guarantees}
Much of the phase retrieval literature has carefully considered the problem of the \emph{computational cost} of finding a solution (see, e.g., \cite{Wang2018b,Kim2025}).
We have not attempted to do something similar in the present work, but it should be possible.

A complicating factor for obtaining competitive computational guarantees is that, in the overparametrized case, the objective function is not locally strongly convex (even modulo the trivial action of the orthogonal/unitary group) about a rank-deficient minimizer.
An overview of this issue and further reading is provided by \cite{Zhang2023b}.
That work also proposes a solution via preconditioned gradient descent.
It is likely that their results (e.g., their Cor.~9) could, with some additional calculations of properties of \eqref{eq:opt_gen}, give a computational complexity bound, but we do not pursue this here.

\subsubsection*{Further applications}
We believe that the dual certificate approach of \Cref{sec:dualcerts} can, with additional work, be applied to other (non-Gaussian) measurements the arise in applications.
For example, many papers (e.g., \cite{Candes2015a,Gross2017,Li2022,Li2025,Hu2025}) consider \emph{coded diffraction patterns}, which come from optical imaging;
in particular, the data has the form of optical diffraction images produced with a number of randomly-generated masks.
Certain of these works prove exact recovery results for the semidefinite relaxation PhaseLift via a dual certificate similar to what we use in this paper.
However, for technical reasons, we cannot simply plug their intermediate results into our framework,
so additional work is needed to obtain theoretical landscape guarantees for such a measurement model.

\subsection{Paper outline and additional notation}
The rest of this paper is organized as follows:
\begin{itemize}
	\item \Cref{sec:relwork} gives additional background and related work.
	\item \Cref{sec:basic_calcs} presents the second-order criticality conditions of \eqref{eq:opt_gen} and derives a fundamental deterministic inequality (\Cref{lem:basic_ineq}) that will be foundational for all the results in this paper.
	\item \Cref{sec:AZ_conc} states and proves a probabilistic concentration result (\Cref{lem:AZ_conc}) for the quantity $\scrA^* \scrA(Z_*)$ that appears in \Cref{lem:basic_ineq} and is thus critical to our subsequent results.
	\item \Cref{sec:proof_gausslike} proves the results for Gaussian(-like) measurements (\Cref{thm:pr_benign_r1,thm:pr_benign_overp}) introduced in \Cref{sec:intro_pr_r1,sec:intro_overp}.
	\item \Cref{sec:subG_proofs} gives a proof (based on results from \cite{Krahmer2020}) of the phase retrieval landscape result \Cref{thm:subG_final} for sub-Gaussian measurements given in \Cref{sec:intro_dualcerts}.
	\item \Cref{sec:dualcerts} describes in detail the theoretical machinery of PhaseLift dual certificates (mentioned in \Cref{sec:intro_dualcerts}) and states and proves our main deterministic theoretical result for this analysis (\Cref{thm:dual_thm}).
	We then apply this to the Gaussian measurement ensemble to prove \Cref{thm:gauss_final_nl}, which was given in \Cref{sec:intro_dualcerts}.
\end{itemize}

For convenience, we collect here some (standard) notation that we use throughout the paper.
If $x$ is a vector, we denote its Euclidean ($\ell_2$), $\ell_1$ and $\ell_\infty$ norms by $\norm{x}$, $\norm{x}_1$, and $\norm{x}_\infty$, respectively.
If $X$ is a matrix, we denote its operator, Frobenius (elementwise Euclidean) and nuclear norms by $\opnorm{X}$, $\normF{X}$, and $\nucnorm{X}$, respectively.

Given $A, B \in \herms_d$ (the set of $d \times d$ Hermitian matrices, real or complex according to context), we write $A \preceq B$ (or $B \succeq A$) to mean $B - A \succeq 0$.
We denote by $I_d$ the $d \times d$ identity matrix.
If $X$ is a matrix of rank $r$, we denote its nonzero singular values by $\sigma_1(X) \geq \cdots \sigma_n(X)$.
If $X$ is Hermitian and positive semidefinite, in which case the singular values are the eigenvalues, we may instead write $\lambda_1(X) \geq \cdots \geq \lambda_r(X)$.

\section{Additional background and related work}
\label{sec:relwork}
The phase retrieval literature is vast, and we can only cover a small portion of it that is most relevant to our work.
For further reading, Schechtman et al.\ \cite{Shechtman2015} give an accessible introduction from an optics/image processing point of view.
The recent survey of Dong et al.\ \cite{Dong2023} has a more statistical perspective.
Fannjiang and Strohmer \cite{Fannjiang2020} provide a much longer and more technically detailed overview, including many convex and nonconvex algorithms and their theoretical guarantees.

One parallel line of work which we do not fully cover attempts to show that we can solve nonconvex problems like \eqref{eq:opt_gen}, typically without overparametrization, with a good initialization and local convergence of some algorithm.
This can be done either by proving a ``locally benign'' landscape or by carefully tracking the iterates of a specific algorithm such as gradient descent.
On the one hand, avoiding overparametrization can save some computation.
On the other hand, the statistical guarantees available with such approaches tend to be suboptimal once we go beyond the simplest case of ordinary phase retrieval with Gaussian measurements; see, for example, the disussion of the state-of-the-art results \cite{Li2021a,Peng2024,Gao2021a} below.
See those papers and the older but more comprehensive survey \cite{Chi2019} for further discussion and references.

We also do not attempt to comprehensively survey the literature on nonconvex optimization and benign landscapes for general low-rank matrix sensing without overparametrization.
Outside of phase retrieval (see below) and certain other highly problem-specific results (see, e.g., \cite{Ge2017} for matrix completion and robust principal component analysis),
all the global landscape results we are aware of assume some form of restricted isometry property (RIP).
For state-of-the-art results and further references, see \cite{Bi2022,Zhang2025c,Zhang2025a}.


\subsection{Nonconvex optimization landscapes for phase retrieval}
\label{sec:relwork_landscapes}
For the quartic objective function \eqref{eq:opt_pr_r1},
primarily the Gaussian measurement case has been studied.
The optimal sample-complexity threshold for obtaining a benign landscape is an open question.
Sun et al.\ \cite{Sun2018} showed that $n \gtrsim d \log^3 d$ suffices.
Cai et al.\ \cite{Cai2023} subsequently improved this requirement to $n \gtrsim d \log d$ (our \Cref{thm:pr_benign_r1}, part~\ref{lb:benign_gauss_p2} recovers this result).
Sarao Mannelli et al.\ \cite{SaraoMannelli2020} provide numerical evidence and heuristic (statistical physics) arguments that the landscape indeed becomes benign when $n/d$ passes a constant threshold.
However, Liu et al.\ \cite{Liu2024} study in detail the landscape when $d$ is large and $d \lesssim n \ll d \log d$ and show that local convexity near the global optimum $x_*$ (which is a key part of the arguments of \cite{Sun2018,Cai2023}) breaks down in this regime.
Other works have considered different objective functions.
Davis et al.\ \cite{Davis2020} study the nonsmooth variant of \eqref{eq:opt_pr_r1} $\min_x~\sum_i \abs{y_i - \abs{\ip{a_i}{x}}^2 }$.
They study the locations of critical points but do not obtain a global benign landscape result.
The recent series of papers \cite{Li2020a,Cai2022,Cai2021,Cai2022a} considers a variety of loss functions which combine features of \eqref{eq:opt_pr_r1} with truncation and/or features of the nonsmooth amplitude-based loss $\sum_i (\sqrt{y_i} - \abs{\ip{a_i}{x}})^2$.
In each case, they show that, with $n \gtrsim d$ Gaussian measurements,
the nonconvex landscape is benign in the sense that every second-order critical point gives exact recovery of the ground truth.

The literature on more general nonconvex optimization formulations and algorithms for phase retrieval is vast, and we do not attempt to cover it here.
Most existing theoretical results consider initialization and local convergence of iterative algorithms.
See the above-mentioned surveys and the recent papers \cite{Li2025,Peng2024,Kim2025} for further background and references.

\subsection{Phase retrieval with sub-Gaussian measurements}
\label{sec:relwork_subG}
For the case of phase retrieval with general sub-Gaussian measurements (like in our \Cref{thm:subG_final}),
Eldar and Mendelson \cite{Eldar2014}, considering only the real case, first showed a universal lower (``stability'') bound on (in our notation) $\norm{\scrA(u u^* - v v^*)}_1$
over $u,v \in \R^d$ (or subsets thereof).
Although their analysis framework is quite general,
their concrete examples assume a ``small-ball'' condition on the $a_i$'s that rules out, for example, measurement vectors composed of i.i.d.\ symmetric Bernoulli (zero-mean $\pm 1$-valued) random variables (hence this is qualitatively similar to the fourth-moment assumption $\E \abs{\crdvar}^4 > 1$ of \Cref{thm:subG_final}).

Krahmer and Liu \cite{Krahmer2018} build on that analysis framework and show that we can relax the small-ball (or moment) assumption if we assume that the ground truth vector is not too ``peaky''; this is the assumption $\norm{x_*}_\infty \leq \incohp \norm{x_*}$ of \Cref{thm:subG_final}.
They furthermore show, via a dual certificate approach similar to \cite{Candes2013,Demanet2014}, that, under similar assumptions as our \Cref{thm:subG_final}, a variant of \eqref{eq:phaselift} gives exact recovery.
Krahmer and Stöger \cite{Krahmer2020} extend this to the complex case (albeit without using dual certificates).

Independently, Gao et al.\ \cite{Gao2021a}, under measurement moment assumptions similar to those of \Cref{thm:subG_final},
showed that a spectral initialization plus gradient descent algorithm gives exact recovery when $n \gtrsim d \log^2 d$.

Recently, Peng et al.\ \cite{Peng2024}, with a similar setup as \cite{Krahmer2020} (and thus \Cref{thm:subG_final}), use an intricate leave-one-out analysis
to show that spectral initialization plus gradient descent (with much larger step size than the result of \cite{Gao2021a} allows) gives exact recovery.
Their guarantees require $n \gtrsim d \log^3 d$ measurements.
They comment that, before their work, there was no non-convex algorithm theoretically guaranteed to solve phase retrieval under such assumptions (e.g., symmetric Bernoulli measurements).
Our \Cref{thm:subG_final} gives another nonconvex approach with improved sample complexity via a benign landscape of the least-squares problem \eqref{eq:opt_gen}.

\subsection{Semidefinite low-rank matrix sensing (generalized phase retrieval)}
\label{sec:relwork_gpr}
The more general semidefinite low-rank matrix sensing problem we present in \Cref{sec:intro},
that is, recovery of a matrix $Z_* \succeq 0$ from measurements of the form $\ip{A_i}{Z_*}$ for positive semidefinite (PSD) measurement matrices $A_i \succeq 0$,
is sometimes called \emph{generalized phase retrieval}.
However, this term is not entirely well defined in the literature.
For example, it is used by \cite{Wang2019a} (and certain follow-up works) to denote a variety of problems, including quite general linear matrix sensing.
However, they primarily use this term to mean recovery of a \emph{vector} $x_*$ from quadratic measurements of the form $\ip{A_i}{x_* x_*^*}$ for general (not necessarily PSD) $A_i \in \herms_d$.
In this section, we only consider cases where both $Z_*$ and the $A_i$'s are PSD.

One special case of semidefinite low-rank matrix sensing is the multidimensional scaling or sensor network localization problem. The work \cite{Criscitiello2025preprint},
written in parallel with the present paper, studies the nonconvex landscape of such problems in detail; certain of the results in that work are special cases of those in the present paper.

Chi and Lu \cite{Chi2016} propose and study numerically an iterative (Kaczmarz) algorithm for recovery of a low-rank PSD matrix from rank-1 PSD measurements. They do not provide theoretical guarantees; existing theoretical analyses of similar algorithms (e.g., in \cite{Tan2019}) only consider the ordinary phase retrieval case $r = 1$.
The best statistical guarantees we are aware of for a nonconvex approach to this general problem (also with rank-1 measurements) are those of Li et al.\ \cite{Li2021a},
who prove convergence of an initialization plus gradient descent algorithm by tracking the iterates.
However, their results are statistically suboptimal in terms of the required sample size; see the discussion after \Cref{thm:gauss_final_nl} in \Cref{sec:intro_dualcerts}.

For convex approaches to the same problem, Chen et al.\ \cite{Chen2015d} analyse a trace-regularized variant of \eqref{eq:phaselift}, though they note that their techniques could extend beyond the case $Z_* \succeq 0$ to recovery of general Hermitian matrices.
Indeed, Kueng et al.\ \cite{Kueng2017} later do exactly this (with some additional extensions).
Both works show that, if $r = \rank(Z_*)$, then $n \gtrsim rd$ Gaussian measurements suffice to ensure recovery with semidefinite programming.
Their analysis depends on the nuclear norm penalty and does not take advantage of PSD structure as we (and, for example, \cite{Krahmer2020,Candes2013,Demanet2014}) do.

Relatedly, Balan and Dock \cite{Balan2022} study loss functions of the form \eqref{eq:opt_gen} as well as
``amplitude''--based loss functions of the form
\[
	\sum_i (\ip{A_i}{X X^*}^{1/2} - \ip{A_i}{Z_*}^{1/2})^2
\]
for general PSD matrices $A_i$.
They focus on explicit calculation of upper and lower isometry constants of these loss functions with respect to certain natural metrics ($\lowergaussF$ from our \Cref{thm:pr_benign_overp} is one example of such a constant).

\subsection{Overparametrization and condition numbers in low-rank matrix optimization}
\label{sec:relwork_overp}
We have seen in \Cref{sec:intro_overp} that we can view overparametrization as a way to overcome a poor condition number of the measurement operator $\scrA$.
As discussed there, this is particularly related to recent work showing benign landscape under a restricted isometry property \cite{Ma2023b,Zhang2025c,Zhang2025a}.
More broadly, overparametrization can be a useful tool to solve general\footnote{The quadratic-cost program \eqref{eq:phaselift} as well as the many variants in the literature can be put in this form, though most works do not do this.} SDPs with linear objective and constraints of the form
\begin{equation}
	\label{eq:SDP_gen}
	\min_{Z \succeq 0}~\ip{C}{Z} \st \scrAbr(Z) = y,
\end{equation}
where, for some dimensions $d', n'$, $C \in \herms_{d'}$, $y \in \R^{n'}$, and $\scrAbr \colon \herms_{d'} \to \R^{n'}$ is linear.

Parametrizing $Z$ by a Burer-Monteiro factorization of the form $X X^*$ for $X \in \F^{n' \times p}$, the resulting nonlinear constraint $\scrAbr(X X^*) = y$ becomes, under certain conditions, a Riemannian manifold constraint \cite{Boumal2019}.
Under these conditions, it is known that the problem \eqref{eq:SDP_gen} always has a solution of rank $\approx \sqrt{n'}$,
and, indeed, if the optimization rank parameter $p$ is chosen to be at least this rank bound, then, for \emph{generic} cost matrices $C$, the optimization landscape is benign, though pathological cases exist where this fails.
See \cite{Boumal2019,OCarroll2022} for relevant results and further reading.

However, for certain problems, we can choose $p$ much smaller than $\sqrt{n'}$.
In addition to the matrix sensing problems we have already discussed,
this phenomenon is well studied for \emph{synchronization} problems.
For certain instances, the optimization landscape is again tied to a condition number (that of a dual certificate matrix to \eqref{eq:SDP_gen}),
and overparametrizing the problem (i.e., choosing the rank parameter $p$ of the Burer-Monteiro factorization to be larger than the rank of the global optimum of \eqref{eq:SDP_gen}) can compensate when the condition number is too large \cite{Abdalla2026,Ling2025,Endor2024,McRae2025preprinta}.

\section{Criticality conditions and basic consequences}
\label{sec:basic_calcs}
All of our theoretical guarantees concern \emph{second-order critical points} of the smooth nonconvex problem \eqref{eq:opt_gen}.
In the real case, $X$ is a second-order critical point if, at $X$, the gradient of the objective function $f_p$ is zero and the Hessian quadratic form is positive semidefinite,
that is,
\begin{equation}
	\label{eq:socp}
	\begin{aligned}
		\nabla f_p(X) &= 0, \quad \text{and}\\
		\nabla^2 f_p(X)[\Xdt, \Xdt] &\geq 0 \quad \text{for all}\quad \Xdt \in \R^{d \times p}.
	\end{aligned}
\end{equation}
In the complex case ($\F = \C$), the meaning is the same, but we must consider \eqref{eq:opt_gen} to be an optimization problem over the real and imaginary parts of the complex variable $X$:
that is, if $X = U + i V$ for $U, V \in \R^{d \times p}$,
we calculate the gradient and Hessian in the variable $(U, V)$.
We make this explicit in our calculations below.

The main result of this section is the following lemma, which is the foundation for every subsequent landscape result in this paper.
A more specialized version of this result appears in the parallel work \cite{Criscitiello2025preprint}.
\begin{lemma}
	\label{lem:basic_ineq}
	Consider \eqref{eq:opt_gen} under the measurement model \eqref{eq:model_gen}.
	If $Z_* \succeq 0$ has rank $r \geq 1$,
	let $X_* \in \F^{d \times r}$ be such that $Z_* = X_* X_*^*$.
	
	Let $X \in \F^{d \times p}$ be a second-order critical point of \eqref{eq:opt_gen}.
	For any matrix $R \in \F^{p \times r}$, we have
	\begin{align*}
		&\norm{\scrA(X X^* - Z_*)}^2  \\
		&\quad\leq \ip{\xi}{\scrA(X X^* - Z_*)} \\
		&\quad\qquad + \frac{2}{p+2} \ip{y}{ \scrA((X_* - X R)(X_* - X R)^*) } \\
		&\quad\leq \ip{\xi}{\scrA(X X^* - Z_*)} + \frac{2 \opnorm{\scrA^*(y)}}{p + 2} \normF{X_* - X R}^2.
	\end{align*}
\end{lemma}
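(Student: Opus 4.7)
The plan is to apply the second-order optimality condition with a carefully chosen random perturbation and then average. Writing the residual $e = \scrA(XX^*) - y = \scrA(H) - \xi$ with $H = XX^* - Z_*$, the first-order condition $\nabla f_p(X) = 0$ is equivalent to $\scrA^*(e) X = 0$, while the Hessian condition reads
\[
\norm{\scrA(X \dot X^* + \dot X X^*)}^2 + 2 \ip{e}{\scrA(\dot X \dot X^*)} \geq 0
\]
for every perturbation $\dot X \in \F^{d \times p}$. I will feed it the random direction $\dot X = D R'$, where $D = X_* - X R$ and $R' \in \F^{r \times p}$ is a random Gaussian matrix rescaled so that $\E[R'(R')^*] = I_r$, and then take expectation over $R'$. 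The motivation for this ansatz is twofold: every cross term in $\dot X \dot X^* = D R'(R')^* D^*$ that carries a factor of $X$ is killed by $\scrA^*(e) X = 0$, so combined with the first-order identity $\ip{e}{\scrA(XX^*)} = 0$ the curvature contribution averages cleanly to $\E \ip{e}{\scrA(\dot X \dot X^*)} = \ip{e}{\scrA(Z_*)} = -\norm{\scrA(H)}^2 + \ip{\xi}{\scrA(H)}$; meanwhile the entrywise variance $1/p$ of $R'$ is what will eventually furnish the denominator $p+2$.

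For the gradient-squared piece, a standard Gaussian-moment calculation yields
\[
\E\, \norm{\scrA(X \dot X^* + \dot X X^*)}^2 = \frac{4}{p} \sum_{i=1}^n \normF{D^* A_i X}^2,
\]
and each summand is controlled by the PSD inequality $\normF{D^* A_i X}^2 = \tr(A_i D D^* A_i X X^*) \leq \ip{A_i}{D D^*}\, \ip{A_i}{X X^*}$, which follows from the operator inequality $A_i M A_i \preceq \ip{A_i}{M}\, A_i$ for $M \succeq 0$ and holds precisely because each of $A_i$, $DD^*$, $XX^*$ is positive semidefinite. Writing $\scrA(XX^*) = y + e$ then converts $\sum_i \ip{A_i}{XX^*}\, \ip{A_i}{DD^*}$ into $\ip{y}{\scrA(DD^*)} - \norm{\scrA(H)}^2 + \ip{\xi}{\scrA(H)}$. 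Substituting both averaged expressions into the Hessian inequality and collecting, the coefficient of $\norm{\scrA(H)}^2$ on the left becomes $\tfrac{p+2}{p}$ and the coefficient of $\ip{y}{\scrA(DD^*)}$ on the right is $\tfrac{2}{p}$; dividing yields exactly the claimed $\tfrac{2}{p+2}$. The second inequality then follows from $\ip{y}{\scrA(DD^*)} = \ip{\scrA^*(y)}{DD^*} \leq \opnorm{\scrA^*(y)}\, \tr(DD^*) = \opnorm{\scrA^*(y)} \normF{X_* - X R}^2$.

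The main subtlety is engineering the perturbation so that the sharp factor $\tfrac{2}{p+2}$ actually appears. The naive deterministic choice $\dot X = D$, which is essentially the $p = 1$ version, runs through the identical argument but produces only the weaker constant $\tfrac{2}{3}$; the improvement for larger $p$ depends crucially on averaging over the $p$ random columns of $R'$, whose entrywise variance $1/p$ is what propagates into the denominator. Equally essential is the PSD trace bound $\tr(A_i DD^* A_i XX^*) \leq \ip{A_i}{DD^*}\, \ip{A_i}{XX^*}$, which is the one place where the semidefinite structure of the measurements is truly exploited and which would fail for generic Hermitian $A_i$.
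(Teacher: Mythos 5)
Your proposal is correct and is essentially the paper's own argument in disguise: averaging the second-order condition over the random directions $\dot X = (X_* - XR)R'$ with $\E[R'(R')^*] = I_r$ reproduces exactly the paper's summation of the Hessian inequality over rank-one directions $(X_*-XR)w_\ell\, v_k^*$ for orthonormal bases $\{v_k\}$ of $\F^p$ and $\{w_\ell\}$ of $\F^r$ (both yield the curvature total $\tfrac{4}{p}\sum_i \normF{D^* A_i X}^2$ up to normalization), and your PSD bound $\tr(A_i DD^* A_i XX^*) \leq \ip{A_i}{DD^*}\ip{A_i}{XX^*}$ is the same Cauchy--Schwarz step as the paper's inequality \eqref{eq:psdA_ineq}, followed by the identical use of $\scrA^*(e)X = 0$. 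The one point to make explicit is the complex case, which the paper handles via a real embedding and the $J$-matrix trick: your moment identity $\E\norm{\scrA(X\dot X^* + \dot X X^*)}^2 = \tfrac{4}{p}\sum_i\normF{D^* A_i X}^2$ holds as stated only for $\F=\R$ (a circularly symmetric complex Gaussian $R'$ gives $\tfrac{2}{p}$, leading to $\tfrac{1}{p+1}$ rather than the claimed $\tfrac{2}{p+2}$, which does not formally imply the lemma unless $\ip{y}{\scrA(DD^*)}\geq 0$), so for $\F=\C$ you should take $R'$ with real Gaussian entries (still a valid direction), bound $\normF{\real(D^* A_i X)} \leq \normF{D^* A_i X}$, and note that criticality gives the complex matrix equation $\scrA^*(e)X = 0$, after which the constant $\tfrac{2}{p+2}$ comes out exactly as in your real-case computation.
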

One potential benefit of overparametrization is immediately clear;
the larger $p$, the smaller the last term in the above inequality will be.
\begin{proof}
The second inequality of the result follows from
\begin{align*}
	&\ip{\scrA^*(y)}{(X_* - X R)(X_* - X R)^*} \\
	&\qquad\leq \opnorm{\scrA^*(y)} \nucnorm{(X_* - X R)(X_* - X R)^*} \\
	&\qquad= \opnorm{\scrA^*(y)} \normF{X_* - X R}^2.
\end{align*}
We now turn to the first inequality.

We first consider the real case $\F = \R$,
and then we extend this to the complex case.
Standard calculations give
\begin{align*}
	\nabla f_p(X) &= 4 \scrA^*(A(X X^T) - y) X \\
	&= 4 \scrA^* (\scrA(X X^T - Z_*) - \xi ) X,
\end{align*}
and
\begin{align*}
	\nabla^2 f_p(X)[\Xdt, \Xdt] &= 4 \ip{\scrA^* (\scrA(X X^T) - y) }{\Xdt \Xdt^T} \\
	&\qquad + 2 \norm{\scrA(X \Xdt^T + \Xdt X^T)}^2 \\
	&= 4 \ip{\scrA(X X^T - Z_*) - \xi }{\scrA(\Xdt \Xdt^T)} \\
	&\qquad + 2 \norm{\scrA(X \Xdt^T + \Xdt X^T)}^2.
\end{align*}
We will drop a factor of $4$ from now on, as it has no effect on the criticality conditions \eqref{eq:socp}.

In the case of rank-one measurements $A_i = a_i a_i^T$ and $p = 1$,
we have the convenient identity
\[
	\frac{1}{2} \norm{\scrA(X \Xdt^T + \Xdt X^T)}^2 = 2 \ip{\scrA(X X^T)}{\scrA(\Xdt \Xdt^T)}.
\]
Outside this specific case, this equality does not hold in general, but it becomes an \emph{in}equality that will still be useful.
More precisely, for $A \succeq 0$, we have, by Cauchy-Schwartz,
for any matrices $B,C$ of appropriate size,
\begin{align*}
	\ip{A}{B C^T + C B^T}^2
	&= 4\ip{A^{1/2} B}{A^{1/2} C}^2 \\
	&\leq 4\normF{A^{1/2} B}^2 \normF{A^{1/2} C}^2 \\
	&= 4\ip{A}{B B^T}\ip{A}{C C^T}.
\end{align*}
Therefore, applying this to each $A_i \succeq 0$, we have
\begin{equation}
	\label{eq:psdA_ineq}
	\frac{1}{2} \norm{\scrA(B C^T + C B^T)}^2
	\leq 2 \ip{\scrA(B B^T)}{\scrA(C C^T)}.
\end{equation}
We will consider rank-one $\Xdt$ of the form $\Xdt = u v^T$ for $u \in \R^d, v \in \R^p$.
Plugging this into the Hessian inequality of \eqref{eq:socp} and then applying \eqref{eq:psdA_ineq} gives
\begin{align*}
	0 &\leq \norm{v}^2 \ip{\scrA(X X^T - Z_*) - \xi }{\scrA(u u^T)} \\
	&\qquad + \frac{1}{2} \norm{\scrA(X v u^T + u (X v)^T)}^2 \\
	&\leq  \norm{v}^2 \ip{\scrA(X X^T - Z_*) - \xi }{\scrA(u u^T)} \\
	&\qquad + 2 \ip{\scrA(X v v^T X^T)}{\scrA(u u^T)}.
\end{align*}
Now, for fixed $u$, take $v = v_k$ for each $k = 1,\dots,p$, where $\{v_k\}_k$ is an orthonormal basis for $\R^p$;
adding up the resulting inequalities gives
\begin{align*}
	0 &\leq p \ip{\scrA(X X^T - Z_*) - \xi }{\scrA(u u^T)} \\
	&\qquad + 2 \ip{\scrA(X X^T)}{\scrA(u u^T)} \\
	&= (p+2) \ip{\scrA(X X^T - Z_*) - \xi }{\scrA(u u^T)} \\
	&\qquad + 2 \ip{\underbrace{\scrA(Z_*) + \xi}_{= y}}{\scrA(u u^T)}.
\end{align*}
We will next take, for $R \in \R^{p \times r}$, $u = (X_* - X R)w_\ell$ for $\ell = 1, \dots, r$,
where $\{w_\ell\}_\ell$ is an orthonormal basis for $\R^r$ to obtain, again summing up the resulting inequalities and dividing by $p+2$,
\begin{align*}
	0 &\leq \ip{\scrA(X X^T - Z_*) - \xi }{\scrA((X_* - X R)(X_* - X R)^T)} \\
	&\qquad + \frac{2}{p+2} \ip{y}{\scrA((X_* - X R)(X_* - X R)^T)}.
\end{align*}
Finally, the zero-gradient condition
\[
	\scrA^* (\scrA(X X^T - Z_*) - \xi)X = 0
\]
implies
\begin{align*}
	&\ip{\scrA(X X^T - Z_*) - \xi }{\scrA ((X_* - X R)(X_* - X R)^T)} \\
	&\qquad= \ip{\scrA(X X^T - Z_*) - \xi }{\scrA(X_* X_*^T - X X^T)} \\
	&\qquad= - \norm{\scrA(X X^T - Z_*)}^2 + \ip{\xi}{X X^T - Z_*},
\end{align*}
which we can plug in to the previous inequality to obtain
\begin{align*}
	0 &\leq -\norm{\scrA(X X^T - Z_*)}^2 + \ip{\xi}{\scrA(X X^T - Z_*)} \\
	&\qquad + \frac{2}{p+2} \ip{y}{ \scrA((X_* - X R)(X_* - X R)^T )}.
\end{align*}
This immediately implies the result in the case $\F = \R$.

Now, consider the complex case $\F = \C$.
We rewrite the problem as one over real variables.
Denote by $\symms_{2d}$ the space of symmetric real $2d \times 2d$ matrices.
We use the maps
\begin{align*}
	\herms_d \ni A = B + i C &\mapsto \Atl = \begin{bmatrix*} B & C^T \\ C & B \end{bmatrix*} \in \symms_{2d},\\
	\C^{d \times p} \ni X = U + iV &\mapsto \Xtl = \begin{bmatrix*} U \\ V \end{bmatrix*} \in \R^{2n \times p}.
\end{align*}
Direct calculation confirms that $\ip{A}{X X^*} = \ip{\Atl}{\Xtl \Xtl^T}$.
An immediate consequence is that $A \succeq 0$ implies $\Atl \succeq 0$.
Furthermore, setting $J = \begin{bmatrix*} 0 & -I_d \\ I_d & 0 \end{bmatrix*}$ as the matrix representing multiplication by $i$ (with $J^T = - J$ representing multiplication by $-i$),
we have $J^T \Atl J = \Atl$, which will be useful in the calculations to follow.

The complex problem \eqref{eq:opt_gen} thus reduces to the real optimization problem
\begin{equation}
	\min_{\Xtl \in \R^{2d \times p}}~\norm{\scrAtl(\Xtl \Xtl^T) - y}^2, \label{eq:opt_cplx_real}
\end{equation}
where $\scrAtl \colon \symms_{2d} \to \R^n$ is defined in the same manner as $\scrA$ with the real measurement matrices $\Atl_1, \dots, \Atl_n \in \symms_{2d}$ formed from $A_1, \dots, A_n \in \herms_d$.

The result for the real case then implies that any second-order critical point $\Xtl$ of \eqref{eq:opt_cplx_real} satisfies,
for any $\Rtl \in \R^{p \times r}$,
\begin{align*}
	&\norm{\scrAtl(\Xtl \Xtl^T - \Ztl_*)}^2 \leq \ip{\xi}{\scrAtl(\Xtl \Xtl^T - \Ztl_*)} \\
	&\qquad\qquad + \frac{2}{p+2} \ip{y}{ \scrAtl((\Xtl_* - \Xtl \Rtl)(\Xtl_* - \Xtl \Rtl)^T) },
\end{align*}
where $\Ztl_* \in \symms_{2d}, \Xtl_* \in \R^{2d \times r}$ are defined in the obvious way.
We immediately obtain, by reversing the complex-to-real transformation,
\begin{align*}
	&\norm{\scrA(X X^* - Z_*)}^2 \leq \ip{\xi}{\scrA(X X^* - Z_*)} \\
	&\qquad\qquad + \frac{2}{p+2} \ip{y}{ \scrA((X_* - X \Rtl)(X_* - X \Rtl)^* )}.
\end{align*}
This is not quite what we want, because we had to assume $\Rtl$ was \emph{real}.
We must therefore inspect further the transformed problem's structure and consider how to extend the proof of the real case.
If $R = R_1 + i R_2 \in \C^{p \times r}$ with $R_1, R_2 \in \R^{p \times r}$,
we can replace, in the Hessian inequality calculations, $\Xtl_* - \Xtl \Rtl$ by $\Xtl_* - \Xtl R_1 - J \Xtl R_2$ without any problem.
To use the zero-gradient condition, we need, in addition to the equality
\[
	\scrAtl^* (\scrAtl(\Xtl \Xtl^* - \Ztl_*) - \xi) \Xtl = 0
\]
which is identical to the real case, the equality
\[
	\scrAtl^* (\scrAtl(\Xtl \Xtl^* - \Ztl_*) - \xi) J \Xtl = 0,
\]
which follows from the previous equality by the fact that, for each $i$,
\[
	J^T \Atl_i J = \Atl_i \quad \Longleftrightarrow \quad \Atl_i J = J \Atl_i.
\]
Finally, noting that $X R \longleftrightarrow \Xtl R_1 + J \Xtl R_2$ under the complex-to-real transformation, we indeed obtain the claimed inequality.
\end{proof}

\section{Concentration of \texorpdfstring{$\scrA^*\scrA(Z_*)$}{A*A(Z\_*)} for sub-Gaussian measurements}
\label{sec:AZ_conc}
A key quantity in \Cref{lem:basic_ineq} in the previous section is $\opnorm{\scrA^*(y)} = \opnorm{\scrA^* \scrA(Z_*) + \scrA^*(\xi)}$.
We do not consider noise in detail in this paper (the term $\scrA^*(\xi)$ has been studied in other works on low-rank matrix sensing and phase retrieval; see, e.g., \cite{Candes2011b,Cai2015,McRae2023}),
but we still need to understand the spectral properties of the matrix $\scrA^* \scrA(Z_*)$.
In this section, we provide a concentration result for this matrix when the measurements are sub-Gaussian;
we will use this result in each of our applications.

We say that a zero-mean random vector $a \in \C^d$ is $K$--sub-Gaussian if, for every unit-norm $x \in \C^d$, $\E e^{ \abs{\ip{a}{x}}^2/K^2} \leq 2$.
This is, in particular, true if the entries of $a$ are i.i.d.\ copies of a $K$--sub-Gaussian random variable $\crdvar$ in the sense given in \Cref{sec:intro_subG}.

We explicitly consider the complex case, as it may be the case that the ground truth signal is real, i.e., $\F = \R$, while the measurements are complex. The above definition is still valid when $a$ is real.

The following concentration result is a straightforward extension of \cite[Lem.~21]{Sun2018}. For completeness, we provide a full proof.
\begin{lemma}
	\label{lem:AZ_conc}
	Let $Z_* \succeq 0$ be fixed and of rank $r$.
	Let $a_1, \dots, a_n$ be i.i.d.\ copies of a $K$--sub-Gaussian vector $a \in \C^d$,
	and take $A_i = a_i a_i^*$.
	There exists a universal constant $c > 0$ such that, if $n \geq d$, with probability at least $1 - 3 n^{-2}$,
	\begin{align*}
		&\scrA^* \scrA(Z_*)
		\preceq \E \scrA^* \scrA(Z_*) \\
		&\qquad + c K^2 \parens*{\sqrt{\frac{d + \log n}{n}} +  \frac{(d + \log n) \log n}{n}} (\tr Z_*) I_d.
	\end{align*}
\end{lemma}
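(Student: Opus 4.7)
The plan is a truncation plus matrix Bernstein argument, directly generalizing \cite[Lem.~21]{Sun2018} from the Gaussian to the sub-Gaussian setting. By homogeneity it suffices to treat $\tr Z_* = 1$. Decompose the quantity of interest as an independent sum of PSD random matrices:
\[
	\scrA^* \scrA(Z_*) = \sum_{i=1}^n M_i, \qquad M_i = (a_i^* Z_* a_i)\, a_i a_i^*.
\]
These summands are heavy-tailed: $\norm{a_i}^2$ is sub-exponential with parameter of order $K^2 d$ (concentrated near $\E \norm{a_i}^2 = d$), and $a_i^* Z_* a_i = \norm{Z_*^{1/2} a_i}^2$ is sub-exponential with parameter of order $K^2 \tr Z_*$ by Hanson--Wright. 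A union bound then yields, on an event $\Omega$ of probability at least $1 - 2n^{-2}$,
\[
	\norm{a_i}^2 \leq c_1 K^2 (d + \log n) \quad \text{and} \quad a_i^* Z_* a_i \leq c_1 K^2 \log n, \qquad i = 1, \dots, n,
\]
for a suitably large universal constant $c_1$.

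Let $\widetilde M_i$ denote $M_i$ multiplied by the indicator that both truncations hold for sample $i$; on $\Omega$ one has $\widetilde M_i = M_i$ for every $i$, so it suffices to bound $\opnorm{\sum_i (\widetilde M_i - \E \widetilde M_i)}$ via matrix Bernstein. The truncation gives the almost-sure bound $\opnorm{\widetilde M_i} \leq B$ with $B \lesssim K^4 (d + \log n) \log n$. For the matrix variance $v = \opnorm{\sum_i \E \widetilde M_i^2}$, use $\widetilde M_i^2 \preceq M_i^2 = (a_i^* Z_* a_i)^2 \norm{a_i}^2\, a_i a_i^*$ and estimate $\opnorm{\E M_i^2}$ directly by sub-Gaussian fourth-moment estimates (applied without pointwise truncation); this yields $\opnorm{\E M_i^2}$ of order $d\, (\tr Z_*)^2$ times a constant power of $K$, and hence $v$ of order $nd$ up to $K$-factors. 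Matrix Bernstein with failure probability $n^{-2}$ then produces, on an additional event of probability at least $1 - n^{-2}$,
\[
	\opnorm*{\sum_i (\widetilde M_i - \E \widetilde M_i)} \lesssim \sqrt{v \log(dn)} + B \log(dn),
\]
which under $n \geq d$ (so $\log(dn) \lesssim d + \log n$) reproduces, after reinstating $K$ and $\tr Z_*$, the two terms in the claimed bound. The small bias $\E M_i - \E \widetilde M_i = \E[M_i \mathbf 1_{\{\text{truncation fails}\}}]$ is absorbed via Cauchy--Schwarz and the sub-exponential tails. Adding failure probabilities gives the total $3n^{-2}$ in the lemma.

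The main obstacle is to obtain the variance bound $\opnorm{\E M_i^2} \lesssim d (\tr Z_*)^2$ at the correct scale. A naive approach that pulls the pointwise truncation $\norm{a_i}^2 \lesssim K^2 (d + \log n)$ outside the expectation inflates $v$ by an extra factor of $d + \log n$ and destroys the $\sqrt{(d+\log n)/n}$ scaling of the leading term. The correct estimate instead requires genuinely averaging over $a$: combining the elementary identity $\E \norm{a}^2 a a^* \preceq (d + O(K^4)) I_d$ (by direct elementwise computation) with concentration of $a^* Z_* a$ around its mean $\tr Z_*$. This fourth-moment calculation is the technical heart of the proof; because it relies only on sub-Gaussian moment bounds, it carries over from the Gaussian argument of \cite[Lem.~21]{Sun2018} with only cosmetic changes.
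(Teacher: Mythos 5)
There is a genuine gap at the final step: matrix Bernstein cannot reproduce the claimed deviation. Your matrix variance proxy is genuinely of order $v = \opnorm{\sum_i \E \widetilde M_i^2} \approx K^8\, n d\, (\tr Z_*)^2$ (the factor $d$ comes from $\norm{a_i}^2$ sitting inside the second moment and is not an artifact of a loose bound), and matrix Bernstein at confidence $1 - n^{-2}$ then gives a deviation of order $\sqrt{v \log(dn)} + B\log(dn) \approx K^4\sqrt{nd\log n}\,\tr Z_* + K^4 (d+\log n)\log^2 n\,\tr Z_*$. This exceeds the lemma's bound by a $\sqrt{\log n}$ factor in the leading term and a $\log n$ factor in the second term throughout the relevant regime $\log n \lesssim d \lesssim n$; the inequality $\log(dn) \lesssim d + \log n$ does not rescue it, since $\sqrt{nd\log(dn)}$ is still much larger than $\sqrt{n(d+\log n)}$. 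The structural reason is that the claimed leading term corresponds to an effective per-direction variance of order $n$ (with the dimension entering only additively as $d+\log n$), whereas any matrix-Bernstein variance must carry the extra $\norm{a_i}^2 \approx d$ and then gets multiplied by the dimensional logarithm. So the sentence ``which \dots reproduces the two terms in the claimed bound'' is exactly where the argument fails, and your closing remark that obtaining $v \approx nd$ is the ``technical heart'' misidentifies the difficulty: even the correct $v \approx nd$ is too large.

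The paper's proof is built to avoid this: it reduces to rank one by applying the bound to each eigenvector of $Z_*$ and union bounding; it truncates only the scalar weight $\ip{A_i}{x_* x_*^*}$ at level $\tau \approx K^2 \log n$ (no truncation of $\norm{a_i}^2$); it proves a scalar sub-exponential Bernstein bound for $\ip{Y_\tau}{x x^*}$ at each fixed unit $x$, whose per-direction variance is $\lesssim K^8 n$ with Bernstein parameter $\lesssim K^2\tau$; and it then union bounds over an $\varepsilon$-net of the sphere, so the dimension enters additively as $d + t$ inside the square root, giving $K^4\sqrt{n(d+\log n)} + K^4(d+\log n)\log n$ with $t \approx \log n$. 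Two further differences: because the claim is one-sided and all summands are PSD, $\scrA^*\scrA(Z_*) \succeq Y_\tau$ gives $\E \scrA^*\scrA(Z_*) \succeq \E Y_\tau$, so no bias term ever needs to be estimated---your Cauchy--Schwarz control of $\E M_i - \E\widetilde M_i$ and the truncation of $\norm{a_i}^2$ are unnecessary. Also, the lemma assumes only that $a$ is a $K$--sub-Gaussian vector, so the ``elementwise'' identity $\E \norm{a}^2 a a^* \preceq (d + O(K^4)) I_d$ is not available as stated (it presumes independent unit-variance entries); an $O(K^4 d)$ bound does hold, but, as explained above, even with it the matrix-Bernstein route lands short of the stated lemma.
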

\begin{proof}
	We will consider the case $r = 1$ first and then use this to extend to general $r \geq 1$.
	Thus, for now, assume $Z_* = x_* x_*^*$,
	and, furthermore, assume without loss of generality that $\norm{x_*} = 1$.
	
	Within this proof, we use the letters $c$, $c'$, etc.\ to denote universal positive constants that may change from one usage to another.
	
	Note the following facts which follow from the sub-Gaussian assumption on $a$:
	\begin{itemize}
		\item For all unit-norm $x \in \F^d$, $\ip{A}{x x^*} = \abs{\ip{a}{x}}^2 \geq 0$ satisfies $\E e^{\ip{A}{x x^*}/K^2} \leq 2$,
		which implies, for all integers $k \geq 1$,
		\begin{equation}
			\label{eq:Axx_moments}
			\E \ip{A}{x x^*}^k \leq 2 K^{2k} k!.
		\end{equation}
		\item With probability at least $1 - n^{-3}$,
		\begin{equation}
			\label{eq:trunc_event}
			\begin{aligned}
			\max_i~\ip{A_i}{x_* x_*^*} &= \max_i~\abs{\ip{a_i}{x_*}}^2 \\
			& \leq c K^2 \log n \eqqcolon \tau.
			\end{aligned}
		\end{equation}
		See, for example, \cite[Ch.~2]{Vershynin2018} for more details.
	\end{itemize}
	We then, purely for analysis purposes, consider a truncated version $\scrA^* \scrA(x_* x_*^*)$:
	set
	\[
		Y_\tau \coloneqq \sum_{i=1}^n G_i, \quad \text{where} \quad G_i \coloneqq \ip{A_i}{x_* x_*^*} \indicator{\ip{A_i}{x_* x_*^*} \leq \tau} A_i.
	\]
	On the event of \eqref{eq:trunc_event}, we have $Y_\tau = \scrA^* \scrA(x_* x_*^*)$.

	For any unit-norm $x \in \F^d$, the i.i.d.\ and nonnegative random variables $\{ \ip{G_i}{x x^*} \}_i$ satisfy, for $k \geq 2$,
	\begin{align*}
		\E \ip{G_i}{x x^*}^k
		&= \E (\ip{A_i}{x_* x_*^*} \indicator{\ip{A_i}{x_* x_*^*} \leq \tau} \ip{A_i}{x x^*} )^k \\
		&\leq \tau^{k-2} \E [\ip{A_i}{x_* x_*^*}^2 \ip{A_i}{x x^*}^k] \\
		&\leq \tau^{k-2} 2 K^{2(k+2)} (k+2)! \\
		&\leq c' (c K^2 \tau)^{k-2} K^8 k!.
	\end{align*}
	The second inequality comes from Hölder's inequality together with \eqref{eq:Axx_moments}.
	For the last equality, we have absorbed the factor of $(k+2)(k+1)$ into the constant in the exponential and the leading constant.
	
	This implies that the random variable $\ip{Y_\tau}{x x^*} = \sum_{i=1}^n \ip{G_i}{xx^*}$ is $(c K^4 n, c' K^2 \tau)$--sub-expontential in the sense of \cite[Sec.~2.1]{Wainwright2019},
	so, for any $t \geq 0$,
	with probability at least $1 - 2 e^{-t}$,
	\[
		\abs{ \ip{Y_\tau}{x x^*} - \E \ip{Y_\tau}{x x^*} }
		\lesssim K^4 \sqrt{n t} + K^2 \tau t.
	\]
	By a covering argument (see, e.g., \cite[Ch.~4]{Vershynin2018}),
	we then have, with probability at least $1 - 2 e^{-t}$,
	\begin{align*}
		\opnorm{ Y_\tau - \E Y_\tau }
		&\leq c \parens*{K^4 \sqrt{n (d + t)} + K^2 \tau (d + t)} \\
		&\leq c K^4 \parens*{\sqrt{n (d + t)} +  (\log n) (d + t)}.
	\end{align*}
	Taking $t = 3 \log n$ ensures that this holds with probability at least $1 - 2 n^{-3}$.
	Next, note that $\E \scrA^* \scrA(x_* x_*^*) \succeq \E Y_\tau$ (because $\scrA^* \scrA(x_* x_*^*) \succeq Y_\tau$).
	Furthermore, recall that on the event of \eqref{eq:trunc_event} (which occurs with probability at least $1 - n^{-3}$), $\scrA^* \scrA(x_* x_*^*) = Y_\tau$.
	Then, by a union bound, we have, with probability at least $1 - 3 n^{-3}$,
	\begin{align*}
		&\scrA^* \scrA(x_* x_*^*) - \E \scrA^* \scrA(x_* x_*^*)
		\preceq Y_\tau - \E Y_\tau \\
		&\qquad \qquad \preceq c K^4 \parens*{\sqrt{n (d + \log n)} +  (d + \log n) \log n} I_d.
	\end{align*}
	Rescaling by $1/n$ gives the result for $r = 1$.
	Applying it to each term in the eigenvalue decomposition
	\[
		Z_* = \sum_{k=1}^r \lambda_k(Z_*) u_k u_k^*
	\]
	and taking a union bound gives the result with probability at least
	\[ 1 - 3 r n^{-3} \geq 1 - 3 n^{-2}.
	\]
\end{proof}

\section{Analysis with restricted lower isometry}
\label{sec:proof_gausslike}
In this section, we provide proofs of \Cref{thm:pr_benign_r1,thm:pr_benign_overp}.
These results assume no noise ($\xi = 0$) and a rank-one $Z_* = x_* x_*^*$ for some $x_* \in \F^d$

Before continuing to the full proofs,
it is helpful, to see the benefits of overparametrization, to consider how one would prove a simplified version of \Cref{thm:pr_benign_overp}.
In the notation of that result, suppose we ignore the trace term in \eqref{eq:pr_overp_lb} (setting $\lowergaussTR$ to zero) and obtain from \eqref{eq:pr_overp_cond} the condition
\[
	(p + 2) \lowergaussF > 2L.
\]
If $X$ is a second-order critical point of \eqref{eq:opt_gen},
combining the assumed inequalities \eqref{eq:pr_overp_lb} and \eqref{eq:pr_overp_ub} with \Cref{lem:basic_ineq} (with $\xi = 0$) gives, for any $u \in \F^p$,
\begin{align*}
	(p + 2) \lowergaussF \normF{X X^* - x_* x_*^*}^2
	&\leq (p + 2) \norm{\scrA(X X^* - x_* x_*^*)}^2 \\
	&\leq 2 \opnorm{\scrA^* \scrA(x_* x_*^*)}\norm{x_* - X u}^2 \\
	&\leq 2 L \norm{x_*}^2 \norm{x_* - Xu}^2.
\end{align*}
An obvious choice of $u$ is one that minimizes $\norm{x_* - Xu}^2$.
This also means that $x_* - Xu \in \range(X)^\perp$,
which implies
\begin{align*}
	\normF{X X^* - x_* x_*^*}^2
	&= \normF{X (X - x_* u^*)^* - (x_* - Xu) x_*^*}^2 \\
	&= \normF{X (X - x_* u^*)^*}^2 + \normF{(x_* - Xu) x_*^*}^2 \\
	&\geq \norm{x_*}^2 \norm{x_* - Xu}^2.
\end{align*}
We thus obtain the inequality
\[
	(p + 2) \lowergaussF \norm{x_*}^2 \norm{x_* - Xu}^2 \leq 2 L \norm{x_*}^2 \norm{x_* - Xu}^2.
\]
Because we assumed $(p + 2) \lowergaussF > 2 L$, we must have $\norm{x_* - Xu} = 0$.
Tracing back through our inequalities then shows $\normF{X X^* - x_* x_*^*} = 0$,
that is, $X X^* = x_* x_*^*$.

Unfortunately, even for Gaussian measurements, considering the approximate values of $\lowergaussF$ and $L$ suggested by the expectation \eqref{eq:expectation_gauss},
the condition $(p + 2) \lowergaussF > 2 L$ will not be satisfied when $p = 1$;
the above simple analysis is too loose in this case.
We therefore need a more careful analysis that includes the trace term (corresponding to the parameter $\lowergaussTR$ of \Cref{thm:pr_benign_overp}) of \eqref{eq:expectation_gauss}.

We begin with a proof of \Cref{thm:pr_benign_r1}, which only considers optimization rank parameter $p = 1$.
Although part~\ref{lb:benign_gauss_p1} is a direct consequence of the more general result \Cref{thm:pr_benign_overp},
we provide a full proof for pedagogy and motivation, as the additional calculations necessary to incorporate the trace (i.e., $\lowergaussTR$) term of \eqref{eq:pr_overp_lb} are simplest in the case $p = 1$.

First, in order to prove part~\ref{lb:benign_gauss_p2}, we need, along with \Cref{lem:AZ_conc} above, another concentration result for Gaussian measurements.
\begin{lemma}[{\cite[Lemma 22]{Sun2018}}]
	\label{lem:gauss_lower_direct}
	Consider Gaussian measurements of the form $A_i = a_i a_i^*$,
	where $a_1, \dots, a_n$ are i.i.d.\ real or complex standard Gaussian vectors.
	For a function $c_1(\delta) > 0$ only depending on $\delta$ and a universal constant $c_2 > 0$,
	for any $\delta \in (0, 1)$, if $n \geq c_1(\delta) d \log d$, then,
	with probability at least $1 - c_2 n^{-2}$,
	uniformly over $x, z \in \F^d$,
	\begin{align*}
		&\frac{1}{n} \norm{\scrA(x x^* - z z^*)}^2 \\
		&\qquad \geq (1 - \delta) (1 - \delta) [\fourthmoment \normF{x x^* - z z^*}^2 + (\norm{x}^2 - \norm{z}^2)^2 ].
	\end{align*}
\end{lemma}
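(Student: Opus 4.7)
The plan is to prove a uniform lower isometry bound for the rank-at-most-two Hermitian matrix $S = xx^* - zz^*$, using the expectation identity \eqref{eq:expectation_gauss} together with a truncation-plus-covering argument. Because both sides of the claimed inequality are homogeneous of degree four in $(x,z)$, we may restrict to the compact set $\{(x,z) : \|x\|^2 + \|z\|^2 = 1\}$; equivalently, it suffices to show that
\[
\frac{1}{n}\|\scrA(S)\|^2 \geq (1-\delta)[\fourthmoment\normF{S}^2 + (\tr S)^2]
\]
uniformly over the set $\mathcal{S}$ of Hermitian matrices of rank at most $2$ with $\normF{S}\le 2$ (which contains all the relevant $S$'s after normalization). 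The target quantity $\E\frac{1}{n}\|\scrA(S)\|^2 = \fourthmoment\normF{S}^2 + (\tr S)^2$ is exactly the right-hand side divided by $1-\delta$, so the problem reduces to a one-sided uniform concentration statement.

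\textbf{Pointwise concentration.} For fixed $S\in\mathcal{S}$, we have
\[
\frac{1}{n}\|\scrA(S)\|^2 = \frac{1}{n}\sum_{i=1}^n (a_i^* S a_i)^2,
\]
a sum of i.i.d.\ squared Gaussian quadratic forms. Each summand is a polynomial of degree four in a Gaussian vector, hence sub-exponential with Orlicz norm $\lesssim \normF{S}^2 + (\tr S)^2$. I would apply a Bernstein inequality to obtain, for fixed $S$, deviation of order $\sqrt{t/n}$ with probability $1-2e^{-t}$. To control the heavy upper tails uniformly, I would follow the truncation trick used in the proof of \Cref{lem:AZ_conc}: on the high-probability event $\mathcal{E} = \{\max_i \|a_i\|^2 \leq c(d+\log n)\}$ (which holds with probability $1 - n^{-3}$ by standard $\chi^2$ tails), every quadratic form $|a_i^* S a_i| \leq \opnorm{S}\|a_i\|^2$ is bounded, turning the sum into a bounded empirical process.

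\textbf{Uniform extension via covering.} The set $\mathcal{S}$ is parameterized by two unit eigenvectors and two eigenvalues in $[-2,2]$, which admits an $\varepsilon$-net $\mathcal{N}_\varepsilon$ of cardinality at most $(C/\varepsilon)^{Cd}$ for a universal constant $C$. On the event $\mathcal{E}$, the map $S\mapsto \frac{1}{n}\|\scrA(S)\|^2$ is Lipschitz on $\mathcal{S}$ with constant $\lesssim d + \log n$ (since each summand $(a_i^* S a_i)^2$ is smooth with bounded gradient on $\mathcal{E}$). A union bound over $\mathcal{N}_\varepsilon$ with $t = Cd\log(1/\varepsilon) + 3\log n$ in Bernstein gives, for $\varepsilon = 1/n$,
\[
\left|\frac{1}{n}\|\scrA(S)\|^2 - \E\frac{1}{n}\|\scrA(S)\|^2 \right|\le \delta
\]
simultaneously for all $S\in\mathcal{N}_\varepsilon$ as soon as $n \gtrsim_\delta d\log d$, with probability at least $1 - c n^{-2}$. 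The Lipschitz control on $\mathcal{E}$ then transfers this bound from the net to all of $\mathcal{S}$ at the cost of $\varepsilon \cdot (d+\log n)\lesssim \delta$.

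\textbf{Main obstacle and workaround.} The subtle point is the interplay between the heavy (quartic-Gaussian) tails of the individual summands and the need to cover a continuous family. Without truncation, the Lipschitz constant of the empirical process over $\mathcal{S}$ is itself unbounded, and a naïve chaining argument loses additional $\log$ factors. The truncation to $\mathcal{E}$ cleanly decouples these two difficulties: it produces a bounded process for which Bernstein + net + Lipschitz suffice, while $\mathcal{E}^c$ is absorbed into the failure probability. A secondary cosmetic point is making $(1-\delta)$ appear on both $\normF{S}^2$ and $(\tr S)^2$ simultaneously; since both terms are covered by the same concentration inequality for $\frac{1}{n}\|\scrA(S)\|^2$ (whose expectation is their sum), this follows automatically, possibly at the price of replacing $(1-\delta)^2$ by $(1-\delta)$ after adjusting the constant $c_1(\delta)$.
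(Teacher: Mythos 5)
Your plan has a fatal flaw: the two-sided uniform concentration you claim,
\[
\Bigl|\tfrac{1}{n}\norm{\scrA(S)}^2 - \E\tfrac{1}{n}\norm{\scrA(S)}^2\Bigr| \le \delta \quad \text{for all } S \in \mathcal{S},
\]
is simply false when $n \approx d\log d$. The paper's own counterexample from \Cref{sec:intro_pr_r1} lives inside your set $\mathcal{S}$: take $S = a_1 a_1^*/\norm{a_1}^2$, which is of the form $xx^* - zz^*$ with $z = 0$ and has unit Frobenius norm, yet $\frac{1}{n}\norm{\scrA(S)}^2 \ge \norm{a_1}^4/n \gtrsim d^2/n \gg 1$ with high probability --- and this remains true on your event $\mathcal{E}$, since $\mathcal{E}$ allows $\norm{a_1}^2 \approx d$. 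So no argument can deliver the upper half of your concentration statement, and the mechanism by which your argument fails is quantitative: after truncating only via $\max_i \norm{a_i}^2 \lesssim d + \log n$, each summand $(a_i^* S a_i)^2$ is bounded merely by $\approx (d+\log n)^2$, so the Bernstein bounded-deviation term, evaluated at the $t \approx d\log(1/\varepsilon)$ needed for the union bound over your net, is of order $d^3\log n / n$; this forces $n \gtrsim d^3$, not $d\log d$. (Your Lipschitz constant is also off --- on $\mathcal{E}$ the gradient bound is $\approx (d+\log n)^2$, not $d+\log n$ --- but that is a secondary issue.)

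The statement can only hold one-sidedly, and the proof must be built one-sided from the start. The standard route (this is what Sun et al.\ do in the cited Lemma 22, which the paper invokes rather than reproves --- it only remarks that the real case follows by the same argument) is to truncate the \emph{individual quadratic forms}, e.g.\ replace $(a_i^* S a_i)^2$ by $(a_i^* S a_i)^2\,\indicator{\abs{a_i^* S a_i} \le \tau \normF{S}}$ with $\tau$ a constant or $\approx \sqrt{\log n}$ (compare the truncation at level $c K^2 \log n$ in the proof of \Cref{lem:AZ_conc}). Discarding terms only decreases the empirical sum, so any lower bound for the truncated process transfers to the original one; the truncated expectation still retains a $(1-\delta)$ fraction of $\fourthmoment\normF{S}^2 + \tr^2 S$ by dominated convergence / Gaussian tail estimates; and the truncated summands are bounded by $O(\tau^2\normF{S}^2)$, so Bernstein plus a net over rank-$\le 2$ matrices (net exponent $\approx d$) closes at $n \gtrsim_\delta d\log d$. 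If you rework your write-up along these lines --- one-sided truncation of the summands rather than conditioning on $\norm{a_i}^2$, and abandoning any claim of a uniform upper bound --- the covering skeleton you propose is otherwise sound.
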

Recalling \eqref{eq:expectation_gauss},
note that the right-hand size of the inequality is $(1 - \delta) \E \frac{1}{n} \norm{\scrA(x x^* - z z^*)}^2$.
The cited lemma is precisely the above in the complex case with some simplifications in the statement;
the real case holds by the same arguments.


\begin{proof}[Proof of \Cref{thm:pr_benign_r1}]
	First, we show part~\ref{lb:benign_gauss_p2} of the theorem statement.
	Fix sufficiently small $\delta_U, \delta_L > 0$ for the assumption \eqref{eq:benign_gauss_cond} to hold with $\moment = \fourthmoment$.
	The expectation calculation \eqref{eq:expectation_gauss} gives $\frac{1}{n} \opnorm{\E \scrA^* \scrA(x_* x_*^*)} = (1 + \fourthmoment)\norm{x_*}^2$.
	By \Cref{lem:AZ_conc} (we can take $K \approx 1$), if $n \gtrsim \delta_U^{-2} d + \delta_U^{-1} d \log d$,
	we have, with probability at least $1 - 3 n^{-2}$,
	\begin{align*}
		\frac{1}{n} \opnorm{\E \scrA^* \scrA(x_* x_*^*)}
		&\leq \frac{1}{n} \opnorm{\E \scrA^* \scrA(x_* x_*^*)} + \delta_U \norm{x_*}^2 \\
		&= (1 + \fourthmoment + \delta_U) \norm{x_*}^2.
	\end{align*}
	For the lower isometry assumption, we can directly apply \Cref{lem:gauss_lower_direct};
	if $n \geq c_1(\delta_L) d \log d$, the assumption holds with probability at least $1 - c_2 n^{-2}$.
	Combining the failure probabilities with a union bound gives the result.

	We now turn to proving part~\ref{lb:benign_gauss_p1} of the theorem statement.
	We adopt the cleaner notation of \Cref{thm:pr_benign_overp} and set $\lowergaussF = (1 - \delta_L) \moment$, $\lowergaussTR = 1 - \delta_L$, and $L = 1 + \moment + \delta_U$.
	\Cref{lem:basic_ineq} then implies, for any $s \in \F$,
	\begin{equation}
		\label{eq:rank1_basic}
		\begin{aligned}
		&3 \lowergaussF \normF{x x^* - x_* x_*^*}^2 + 3 \lowergaussTR (\norm{x}^2 - \norm{x_*}^2)^2 \\
		&\qquad\leq 2 L \norm{x_*}^2 \norm{x_* - s x}^2.
		\end{aligned}
	\end{equation}
	The obvious choice of $s$ is the one that minimizes $\norm{x_* - sx}$,
	which, by standard linear algebra calculations, is such that $x_* - s x \perp x$ and (if $x \neq 0$)
	\begin{equation}
		\label{eq:rank1_minc_id}
		\norm{x_* - sx}^2
		= \norm{x_*}^2 - \frac{\abs{\ip{x}{x_*}}^2}{\norm{x}^2}
		= (1 - \rho^2) \norm{x_*}^2,
	\end{equation}
	where
	\[
		\rho^2 \coloneqq \frac{\abs{\ip{x}{x_*}}^2}{\norm{x}^2 \norm{x_*}^2}
	\]
	is the absolute squared correlation between $x$ and $x_*$.
	If $x = 0$, the same holds with $\rho^2 = 0$.
	As $x_* - sx \perp x$,
	we additionally have (denoting by $s^*$ the complex conjugate of $s$)
	\begin{align}
		\normF{x x^* - x_* x_*^*}^2 \nonumber
		&= \normF{x (x - s^* x_*)^* - (x_* - sx) x_*^*}^2 \nonumber \\
		&= \normF{x (x - s^* x_*)^*}^2 + \normF{(x_* - sx) x_*^*}^2 \nonumber \\
		&= \norm{x}^2 \norm{x - s^* x_*}^2 + \norm{x_*}^2 \norm{x_* - sx}^2 \nonumber \\
		&\geq (\norm{x}^4 + \norm{x_*}^4)(1 - \rho^2). \label{eq:rank1_normF_lb}
	\end{align}
	The last inequality uses (cf.\ \eqref{eq:rank1_minc_id})
	\[
		\norm{x - s^* x_*}^2 \geq \min_{s' \in \F}~\norm{x - s' x_*}^2 = (1 - \rho^2)\norm{x}^2.
	\]
	Plugging \eqref{eq:rank1_minc_id} and \eqref{eq:rank1_normF_lb} into \eqref{eq:rank1_basic}, we obtain
	\begin{align*}
		&3 \lowergaussF (\norm{x}^4 + \norm{x_*}^4)(1 - \rho^2) + 3\lowergaussTR (\norm{x}^2 - \norm{x_*}^2)^2 \\
		&\qquad \leq 2 L \norm{x_*}^4(1 - \rho^2).
	\end{align*}
	If $\rho^2 = 1$, then, as $\lowergaussTR = 1 - \delta_L > 0$, we must have $\norm{x}^2 = \norm{x_*}^2$, and we are done.
	If $\rho^2 < 1$,
	then we can divide by $1 - \rho^2$ and obtain the (weaker) inequality
	\[
		3 \lowergaussF (\norm{x}^4 + \norm{x_*}^4) + 3\lowergaussTR (\norm{x}^2 - \norm{x_*}^2)^2
		\leq 2 L \norm{x_*}^4
	\]
	Now assume, without loss of generality, that $\norm{x_*} = 1$,
	and set $t = \norm{x}^2$.
	The above inequality can be rearranged to obtain
	\begin{align*}
		0 &\geq 3(\lowergaussF + \lowergaussTR) t^2 - 6\lowergaussTR t + 3 (\lowergaussF + \lowergaussTR) - 2L \\
		&\geq - \frac{3 \lowergaussTR^2}{\lowergaussF + \lowergaussTR} + 3(\lowergaussF + \lowergaussTR) - 2L,
	\end{align*}
	where the second inequality comes from minimizing the previous expression over $t$ with $t = \frac{\lowergaussTR}{\lowergaussTR + \lowergaussF}$.
	Multiplying by $\lowergaussF + \lowergaussTR$ and rearranging gives
	\[
		3(\lowergaussF^2 + 2 \lowergaussTR \lowergaussF) \leq 2 L (\lowergaussF + \lowergaussTR).
	\]
	Plugging in our values of $\lowergaussF$, $\lowergaussTR$, and $L$ gives
	\[
		3(1 - \delta_L)^2 (\moment^2 + 2 \moment) \leq 2 (1 + \moment + \delta_U) (1 - \delta_L)(1 + \moment).
	\]
	Some algebra gives
	\[
		\moment^2 + 2\moment - 2 \leq 3 (\moment^2 + 2 \moment) \delta_L + 2(\moment + 1) \delta_U,
	\]
	which the condition \eqref{eq:benign_gauss_cond} contradicts.
	This completes the proof.
\end{proof}

With this as a warmup, we now continue to the slightly more complicated general case $p \geq 1$:
\begin{proof}[Proof of \Cref{thm:pr_benign_overp}]
	The inequalities \eqref{eq:pr_overp_lb} and \eqref{eq:pr_overp_ub} and \Cref{lem:basic_ineq} imply, for any $u \in \F^p$,
	\begin{align*}
		&\lowergaussF \normF{X X^* - x_* x_*^*}^2 + \lowergaussTR(\normF{X}^2 - \norm{x_*}^2)^2 \\
		&\qquad \leq \frac{2 L}{p+2} \norm{x_*}^2 \norm{x_* - X u}^2.
	\end{align*}
	We again choose $u$ to minimize $\norm{x_* - X u}^2$.
	Explicitly, we take $u = X^\dagger x_*$, where $X^\dagger$ is the Moore-Penrose pseudoinverse of $X$.
	Again, this ensures that $x_* - Xu \in \range(X)^\perp$, so
	\begin{align*}
		\normF{X X^* - x_* x_*^*}^2
		&= \normF{X(X - x_* u^*)^* - (x_* - Xu)x_*^*}^2 \\
		&= \normF{X(X - x_* u^*)^*}^2 + \norm{x_*}^2 \norm{x_* - X u}^2.
	\end{align*}
	Combined with the previous inequality, we obtain
	\begin{align*}
		&\lowergaussF \normF{X(X - x_* u^*)^*}^2 + \lowergaussTR(\normF{X}^2 - \norm{x_*}^2)^2 \\
	  	&\qquad \leq \parens*{\frac{2L}{p+2} - \lowergaussF} \norm{x_*}^2 \norm{x_* - X u}^2.
	\end{align*}
	We now set
	\[
		\rho^2 \coloneqq \frac{\ip{P_X}{x_* x_*^*}}{\norm{x_*}^2} = \frac{\norm{P_X x_*}^2}{\norm{x_*}^2},
	\]
	where $P_X = X X^\dagger$ is the orthogonal projection matrix onto $\range(X)$.
	Note that in the case $p = 1$, this reduces to the same quantity as in the proof of \Cref{thm:pr_benign_r1} above.
	
	Due to the choice $u = X^\dagger x_*$,
	we have
	\begin{align*}
		\norm{x_* - X u}^2
		&= \norm{x_*}^2 - \norm{X u}^2 \\
		&= \norm{x_*}^2 - \norm{P_X x_*}^2 \\
		&= \norm{x_*}^2(1 - \rho^2).
	\end{align*}
	Furthermore,
	\begin{align*}
		&\normF{X(X - x_* u^*)^*}^2 \\
		&\quad = \normF{X X^* - X u x_*^*}^2 \\
		&\quad = \normF{X X^*}^2 + \normF{X X^\dagger x_* x_*^*}^2 - 2 \ip{X X^*}{X X^\dagger x_* x_*^*} \\
		&\quad = \normF{X X^*}^2 + \norm{x_*}^2 \norm{P_X x_*}^2 - 2 \ip{X X^*}{x_* x_*^*} \\
		&\quad \geq \normF{X X^*}^2 + \norm{x_*}^2 \norm{P_X x_*}^2 - 2 \normF{X X^*} \norm{P_X x_*}^2 \\
		&\quad = \normF{X X^*}^2 + \norm{x_*}^4 \rho^2 - 2 \normF{X X^*} \norm{x_*}^2 \rho^2 \\
		&\quad = (1 - \rho^2) \normF{X X^*}^2 + \rho^2(\normF{X X^*} - \norm{x_*}^2)^2 \\
		&\quad \geq (1 - \rho^2) \normF{X X^*}^2.
	\end{align*}
	We thus obtain
	\begin{align*}
		&\lowergaussF \normF{X X^*}^2 (1 - \rho^2) + \lowergaussTR(\normF{X}^2 - \norm{x_*}^2)^2 \\
		&\qquad \leq \parens*{\frac{2L}{p+2} - \lowergaussF} \norm{x_*}^4 (1 - \rho^2).
	\end{align*}
	If $\rho^2 = 1$, then tracing through our inequalities reveals (as $\lowergaussF > 0$) $\normF{X X^* - x_* x_*^*} = 0$.
	Otherwise, dividing through by $1 - \rho^2$ and noting that $\normF{X}^2 = \tr(X X^*) \leq \sqrt{p} \normF{X X^*}$, we obtain the weaker inequality
	\[
		\frac{\lowergaussF}{p} \normF{X}^4 + \lowergaussTR(\normF{X}^2 - \norm{x_*}^2)^2
		\leq \parens*{\frac{2L}{p+2} - \lowergaussF} \norm{x_*}^4.
	\]
	The rest is similar to the proof of \Cref{thm:pr_benign_r1} above.
	Assume, without loss of generality, that $\norm{x_*} = 1$,
	and set $t = \normF{X}^2$.
	The last inequality can be rewritten as
	\begin{align*}
		0 &\geq \frac{\lowergaussF}{p} t^2 + \lowergaussTR(t - 1)^2
		 - \parens*{\frac{2L}{p+2} - \lowergaussF} \\
		&= \parens*{\frac{\lowergaussF}{p} + \lowergaussTR} t^2 - 2 \lowergaussTR t + \lowergaussF + \lowergaussTR - \frac{2L}{p+2} \\
		&\geq -\frac{\lowergaussTR^2}{\lowergaussTR + \lowergaussF/p} + \lowergaussF + \lowergaussTR - \frac{2L}{p+2} \\
		&= \lowergaussF + \frac{\lowergaussF \lowergaussTR}{p \lowergaussTR + \lowergaussF} - \frac{2L}{p+2}.
	\end{align*}
	The second inequality comes from minimization over $t$ with $t = \frac{\lowergaussTR}{\lowergaussTR + \lowergaussF/p}$.
	The condition \eqref{eq:pr_overp_cond} implies that this last expression is strictly positive, giving a contradiction.
\end{proof}

\section{Sub-Gaussian measurements}
\label{sec:subG_proofs}
In this section, we prove \Cref{thm:subG_final}.
We could prove it via the framework of \Cref{sec:dualcerts};
however, this would require the construction of a suitable dual certificate, which is more complicated for our general sub-Gaussian measurements than it is for the Gaussian measurements we consider in \Cref{thm:gauss_final_nl}.
The paper \cite{Krahmer2018} does this under similar assumptions to \Cref{thm:subG_final} but does not include the complex case.
Instead, we can easily prove \Cref{thm:subG_final} with the help of the following technical lemma of \cite{Krahmer2020}:
\begin{lemma}
	\label{lem:iso_subG}
	Under the conditions of \Cref{thm:subG_final},
	there exist constants $c_1, c_2, c_3, c_4 > 0$ depending only on the properties of $\crdvar$ such that,
	for $n \geq c_1 d$,
	with probability at least $1 - c_2 e^{-c_3 n}$, for all $Z \succeq 0$,
	\[
		\frac{1}{\sqrt{n}} \norm{\scrA(Z - Z_*)}
		\geq \frac{1}{n} \norm{\scrA(Z - Z_*)}_1
		\geq c_4 \nucnorm{Z - Z_*}.
	\]
\end{lemma}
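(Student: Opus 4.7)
The first inequality follows from the elementary bound $\norm{v}_1 \leq \sqrt{n}\,\norm{v}$ on $\R^n$, so all the work lies in the second. My plan is to exploit the tangent-space decomposition sketched in \Cref{sec:intro_dualcerts}. Since $r = 1$ and $Z_* = x_* x_*^*$, set $P_U = x_* x_*^* / \norm{x_*}^2$ and write $H \coloneqq Z - Z_* = H_1 + H_2$, where $H_1 \coloneqq \PT(H) = P_U H + H P_U - P_U H P_U$ and $H_2 \coloneqq \PTp(H) = P_U^\perp H P_U^\perp = P_U^\perp Z P_U^\perp$. Two facts drive everything: $H_1$ lies in the tangent space $\T$ to the rank-$1$ PSD manifold at $Z_*$ and has rank at most $2$, while $H_2 \succeq 0$ because $Z \succeq 0$.

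The PSD component is straightforward. Using $\ip{A_i}{H_2} = a_i^* H_2 a_i \geq 0$ for every $i$,
\[
\frac{1}{n}\norm{\scrA(H_2)}_1 = \ip*{\frac{1}{n}\sum_{i=1}^n A_i}{H_2} \geq c'\,\tr(H_2) = c'\,\nucnorm{H_2},
\]
where the inequality uses $\frac{1}{n}\sum_i A_i \succeq c' I_d$, which holds with probability at least $1 - e^{-c n}$ once $n \gtrsim d$ by a standard matrix Bernstein / sub-Gaussian covariance estimate.

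The tangent component is the main obstacle; here I would follow \cite{Krahmer2020} to establish the restricted $\ell_1$ lower isometry
\[
\frac{1}{n}\norm{\scrA(M)}_1 \geq c_5 \nucnorm{M} \qquad \text{for all } M \in \T,
\]
uniformly with exponentially high probability when $n \gtrsim d$. The natural approach is Mendelson's small-ball method: the $K$--sub-Gaussian assumption on $\crdvar$, combined with either $\E\abs{\crdvar}^4 > 1$ or $\norm{x_*}_\infty \leq \incohp\norm{x_*}$ (and $\abs{\E \crdvar^2} < 1$ in the complex case), yields a pointwise small-ball estimate $\E\abs{\ip{A}{M}} \gtrsim \nucnorm{M}$ on $\T$. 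This is then upgraded to a uniform bound via a covering/empirical-process argument on the unit-nuclear-norm slice of $\T$, which has low Gaussian complexity since every element has rank $\leq 2$ and $\T$ is a real subspace of dimension $O(d)$. The dichotomy in \Cref{thm:subG_final} is exactly what is needed to rule out the sign/conjugation symmetries that would otherwise defeat the pointwise small-ball bound.

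To combine the two estimates, I apply the reverse triangle inequality $\abs{\ip{A_i}{H}} \geq \abs{\ip{A_i}{H_2}} - \abs{\ip{A_i}{H_1}}$ termwise together with a routine restricted upper bound on $\T$: by Cauchy-Schwarz and a second-moment estimate on the low-dimensional subspace $\T$,
\[
\frac{1}{n}\norm{\scrA(H_1)}_1 \leq \frac{1}{\sqrt{n}}\norm{\scrA(H_1)} \leq C\,\normF{H_1} \leq C\,\nucnorm{H_1},
\]
where the last step is trivial since $H_1$ has rank $\leq 2$. Summing then gives $\frac{1}{n}\norm{\scrA(H)}_1 \geq c'\,\nucnorm{H_2} - C\,\nucnorm{H_1}$, and a convex combination of this with the tangent lower bound $\frac{1}{n}\norm{\scrA(H)}_1 \geq c_5\nucnorm{H_1}$ absorbs the negative term to yield $\frac{1}{n}\norm{\scrA(H)}_1 \gtrsim \nucnorm{H_1} + \nucnorm{H_2} \geq \nucnorm{H}$, completing the proof.
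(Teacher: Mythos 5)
There is a genuine gap, and it sits exactly where the paper itself warns the difficulty lies. First, for context: the paper does not prove \Cref{lem:iso_subG} at all --- it imports it from Krahmer and Stöger \cite{Krahmer2020} (their Lemmas 3--5), and your sketch likewise defers the hardest single ingredient (the uniform $\ell_1$ lower bound on the tangent part under the moment/incoherence dichotomy, via a small-ball argument) to that same reference. So what remains to be judged is your gluing of the tangent and PSD estimates, and that step does not work as written. The ``tangent lower bound'' you invoke in the final paragraph, $\frac{1}{n}\norm{\scrA(H)}_1 \geq c_5 \nucnorm{H_1}$, is not what the restricted isometry on $\scrT$ gives: that isometry only controls $\frac{1}{n}\norm{\scrA(H_1)}_1$, and for the full $H = H_1 + H_2$ the triangle inequality degrades it to $\frac{1}{n}\norm{\scrA(H)}_1 \geq c_5 \nucnorm{H_1} - C''\tr(H_2)$ (using $\frac{1}{n}\sum_i A_i \preceq C'' I_d$). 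Pairing this with your other inequality $\frac{1}{n}\norm{\scrA(H)}_1 \geq c'\tr(H_2) - C\nucnorm{H_1}$, a convex combination makes both coefficients positive only if $c'c_5 > C C''$, i.e.\ only if the product of two small lower-isometry constants exceeds the product of two large upper-bound constants. Nothing in your concentration steps delivers such a relation, and it is false in general; if the absorption were that simple, the combination would not be described in \Cref{sec:intro_dualcerts} as the technical crux requiring additional tools. The argument in \cite{Krahmer2020} is genuinely different: it exploits that $\ip{A_i}{H_2} \geq 0$ can only cancel the \emph{negative} part of $\ip{A_i}{H_1}$, and bounds the positive and negative parts of the tangent contribution separately (with a case analysis on the relative sizes of $\nucnorm{H_1}$ and $\tr H_2$), rather than taking a blunt convex combination of two-sided bounds.

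A secondary, fixable issue: your intermediate chain $\frac{1}{\sqrt{n}}\norm{\scrA(H_1)} \leq C \normF{H_1}$, claimed uniformly over $\scrT$ with confidence $1 - e^{-c_3 n}$, is doubtful, since $\frac{1}{n}\sum_i \ip{A_i}{H_1}^2$ involves squares of sub-exponential variables (take the $b$-factor of $H_1 = x_* b^* + b x_*^*$ aligned with a single $a_i$ to see the empirical second moment inflate at this confidence level). You do not need the $\ell_2$ detour: bound $\frac{1}{n}\norm{\scrA(H_1)}_1$ directly by Bernstein plus a covering of the $O(d)$-dimensional subspace $\scrT$, which does give an $O(\normF{H_1})$ upper bound at exponential confidence once $n \gtrsim d$. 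But even with that repair, the proof is incomplete until the combination step is replaced by the one-sided/positivity-based argument (or an equivalent), which is precisely the content this lemma is meant to summarize.
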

This summarizes several intermediate results of \cite{Krahmer2020} (in particular, their Lemmas 3, 4, and 5).
With this, we can continue to the main proof:
\begin{proof}[Proof of \Cref{thm:subG_final}]
	We will use $c, c'$, etc.\ to denote positive constants, depending only on the properties of $\crdvar$, which may change from one use to another.
	
	Expectation calculations (e.g., \cite[Lem.~9]{Krahmer2020}) give
	\begin{align*}
		\E \frac{1}{n} \scrA^* \scrA(Z_*)
		&= (\tr Z_*) I_d + Z_* + (\abs{\E \crdvar^2}^2) \Zbr_* \\
		&\qquad + (\E \abs{\crdvar}^4 - 2 - \abs{\E \crdvar^2}^2) \ddiag(Z_*) \\
		&\preceq c \norm{x_*}^2 I_d,
	\end{align*}
	where $\Zbr_*$ is the elementwise complex conjugate of $Z_*$,
	and $\ddiag\colon \herms_d \to \herms_d$ extracts the diagonal entries of a matrix.
	Together with \Cref{lem:AZ_conc}, we obtain, with probability at least $1 - c n^{-2}$,
	\begin{align*}
		\frac{1}{n} \opnorm{\scrA^* \scrA(Z_*)}
		&\leq c \parens*{1 +  \frac{d \log n}{n}} \norm{x_*}^2 \\
		&\leq c \parens*{1 +  \frac{d \log d}{n}} \norm{x_*}^2.
	\end{align*}
	The second inequality follows from the observation that, for $n \geq d$, $\frac{d \log n}{n} \lesssim \max\{ 1, \frac{d \log d}{n} \}$.
	
	Next, \Cref{lem:iso_subG} (we relax the probability bound) gives, with probability at least $1 - c n^{-2}$, for all $Z \succeq 0$,
	\[
		\frac{1}{n} \norm{\scrA( Z - Z_* )}^2 \geq c \nucnorm{Z - Z_*}^2.
	\]
	By \Cref{lem:basic_ineq}, we obtain,
	on the union of the above events (with a union bound on the final failure probability),
	for any second-order critical point $X$ and any $u \in \F^p$,
	\begin{align*}
		&c n\nucnorm{X X^* - Z_*}^2 \\
		&\quad\leq \norm{\scrA(X X^* - Z_*)}^2 \\
		&\quad \leq \ip{\xi}{\scrA(X X^* - Z_*)} + \frac{2 \opnorm{\scrA^*(y)}}{p + 2} \norm{x_* - X u}^2 \\
		&\quad \leq \opnorm{\scrA^*(\xi)} \nucnorm{X X^* - Z_*} \\
		&\qquad + \frac{2}{p+2} \parens*{ c'(n + d \log d) \norm{x_*}^2 + \opnorm{\scrA^*(\xi)} } \norm{x_* - X u}^2,
	\end{align*}
	As in the proof of \Cref{thm:pr_benign_overp} in \Cref{sec:proof_gausslike},
	we choose $u$ to minimize $\norm{x_* - X u}$,
	and then
	\[
		\nucnorm{X X^* - Z_*}^2 \geq \normF{X X^* - Z_*}^2 \geq \norm{x_*}^2 \norm{x_* - X u}^2.
	\]
	Then, if
	\[
		p \geq c \parens*{ 1 + \frac{d \log d}{n} + \frac{\opnorm{\scrA^*(\xi)}}{n \norm{x_*}^2} },
	\]
	we obtain
	\[
		\nucnorm{X X^* - Z_*}^2 \leq c \frac{\opnorm{\scrA^*(\xi)}}{n} \nucnorm{X X^* - Z_*},
	\]
	from which the claimed error bound immediately follows.
\end{proof}

\section{PhaseLift dual certificate}
\label{sec:dualcerts}
In this section, we develop our landscape analysis of \eqref{eq:opt_gen} by the method of dual certificates.
We fix the rank-$r$ ground-truth matrix $Z_* = X_* X_*^*$,
where $X_* \in \F^{d \times r}$.
Let $Z_*$ have eigenvalue decomposition $Z_* = U \Lambda U^*$,
where $U \in \F^{d \times r}$ with $U^* U = I_r$,
and $\Lambda$ is an $r \times r$ diagonal matrix with diagonal entries $\lambda_1(Z_*) \geq \cdots \geq \lambda_r(Z_*) > 0$.
Note that, for $k = 1, \dots, r$, $\lambda_k(Z_*) = \sigma_k^2(X_*)$.

We write $\Pmat \coloneqq U U^*$ and $\Ppmat \coloneqq I_d - \Pmat$ as the orthogonal projection matrices onto $\range(Z_*)$ and its orthogonal complement respectively.
We denote by $\T$ the tangent space of rank-$r$ matrices at $Z_*$, given by
\[
\T = \{ U B^* + B U^* : B \in \F^{d \times r} \} \subset \herms_d.
\]
We denote by $\Tp$ its orthogonal complement in $\herms_d$ (with respect to the Frobenius inner product).
The orthogonal projections onto $\T$ and $\Tp$ are respectively given, for $S \in \herms_d$,  by
\begin{align*}
	\PT(S) &= S \Pmat + \Pmat S \Ppmat = \Pmat S + \Ppmat S \Pmat \quad \text{and} \\
	\PTp(S) &= \Ppmat S \Ppmat.
\end{align*}

For a deterministic landscape result, we will make two key assumptions, which resemble those made and, for certain measurement models, proved in papers studying PhaseLift such as \cite{Candes2013,Demanet2014,Candes2015a}.
\begin{assumption}[Dual certificate]
	\label{assump:gen_dual}
	For some $\epsilon \geq 0$, there exists $\lambda \in \R^n$ such that $Y \coloneqq \scrA^*(\lambda)$ satisfies
	\begin{align*}
		\PTp(Y) &\succeq \Ppmat \\
		\normF{\PT(Y)} &\leq \epsilon.
	\end{align*}
\end{assumption}
This is simply a higher-rank analog of the inexact dual certificate introduced in \cite{Candes2013,Demanet2014}.
The quantity $\norm{\lambda}$ will be important in our analysis.

\newcommand{\loweriso}{\mu_{\scrT}}
\newcommand{\upperiso}{L_{\scrT}}
\begin{assumption}[Approximate isometry]
	\label{assump:gen_iso}
	For some $\loweriso, \upperiso > 0$,
	\begin{align*}
		\frac{1}{\sqrt{n}} \norm{\scrA(H)} \geq \loweriso \normF{\PT(H)} - \upperiso \tr(\PTp(H))
	\end{align*}
	for all $H \in \herms_d$ with $\PTp(H) \succeq 0$.
\end{assumption}
The papers \cite{Candes2013,Demanet2014}
instead used the separate assumptions
$\frac{1}{n} \norm{\scrA(H)}_1 \geq \loweriso \normF{H}$ for all $H \in \T$
and	$\frac{1}{n} \norm{\scrA(H)}_1 \leq \upperiso \tr H$ for all $H \succeq 0$.
The combination of these, together with the norm inequality $\norm{\scrA(H)} \geq \frac{1}{\sqrt{n}} \norm{\scrA(H)}_1$, immediately implies \Cref{assump:gen_iso},
but this separation turns out to be suboptimal for our derived results.

We can now state our main deterministic result:
\newcommand{\pthresh}{\tau}
\begin{theorem}
	\label{thm:dual_thm}
	Suppose \Cref{assump:gen_dual,assump:gen_iso} hold with $\loweriso > \upperiso \epsilon$,
	and suppose the rank parameter $p$ in \eqref{eq:opt_gen} satisfies
	\begin{equation}
		\label{eq:noisy_dual_cond}
		p > \pthresh \coloneqq 2 \parens*{\frac{1 + \upperiso \sqrt{n} \norm{\lambda}}{\loweriso - \upperiso \epsilon}}^2 \frac{\opnorm{\scrA^*(y)}}{n \lambda_r(Z_*)} - 2.
	\end{equation}
	Then every second-order critical point $X$ of \eqref{eq:opt_gen} satisfies
	\begin{align*}
		&\normF{X X^* - Z_*} \\
		&\quad \leq \normF{\PT(X X^* - Z_*)} + \tr \PTp(X X^*) \\
		&\quad \leq \frac{p+2}{p - \pthresh} \parens*{ \frac{1 + \epsilon +  \sqrt{n} \norm{\lambda} (\upperiso + \loweriso) }{\loweriso - \upperiso\epsilon} }^2 \sqrt{2r} \frac{\opnorm{\scrA^*(\xi)}}{n}.
	\end{align*}
\end{theorem}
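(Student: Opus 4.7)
The plan is to combine the second-order criticality identity of \Cref{lem:basic_ineq} with two bounds coming from the structural hypotheses. Set $H = X X^* - Z_*$; since $\PTp(H) = \Ppmat X X^* \Ppmat \succeq 0$, one has $\normF{\PTp(H)} \leq \tr\PTp(H)$ and hence $\normF{H} \leq Q \coloneqq \normF{\PT(H)} + \tr\PTp(H)$. The first inequality in the theorem is then automatic, and I only need to bound $Q$.

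The first ingredient is the classical PhaseLift dual-certificate estimate. Writing $\ip{Y}{H} = \ip{\lambda}{\scrA(H)}$ and splitting $\ip{Y}{H} = \ip{\PT(Y)}{\PT(H)} + \ip{\PTp(Y)}{\PTp(H)}$, the assumptions $\PTp(Y) \succeq \Ppmat$ (combined with $\PTp(H) \succeq 0$) and $\normF{\PT(Y)} \leq \epsilon$ yield $\tr\PTp(H) \leq \epsilon \normF{\PT(H)} + \norm{\lambda}\norm{\scrA(H)}$. Substituting this into \Cref{assump:gen_iso} and rearranging gives $\normF{\PT(H)} \leq C_2 \norm{\scrA(H)}/\sqrt{n}$ and $Q \leq C_1 \norm{\scrA(H)}/\sqrt{n}$, where $C_2 = (1 + \upperiso\sqrt{n}\norm{\lambda})/(\loweriso - \upperiso\epsilon)$ and $C_1 = (1 + \epsilon + (\loweriso+\upperiso)\sqrt{n}\norm{\lambda})/(\loweriso - \upperiso\epsilon)$ are the constants appearing in the theorem.

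The second ingredient bounds the residual term in \Cref{lem:basic_ineq}. I choose $R = X^\dagger X_*$, so $X_* - X R = P_X^\perp X_*$ and $\normF{X_* - X R}^2 = \tr(P_X^\perp Z_*)$. Using $P_X^\perp X X^* = 0$ together with $Z_* \Ppmat = 0$ and $\PTp(H)\Pmat = 0$,
\[
  P_X^\perp Z_* = P_X^\perp Z_* \Pmat = -P_X^\perp H \Pmat = -P_X^\perp \PT(H) \Pmat,
\]
so $\normF{P_X^\perp Z_*}^2 \leq \normF{\PT(H)}^2$. Writing $\tr(P_X^\perp Z_*) = \tr(\Lambda B)$ and $\normF{P_X^\perp Z_*}^2 = \tr(\Lambda^2 B)$ with $B = U^* P_X^\perp U \succeq 0$, the elementary Rayleigh-type inequality $\lambda_r(Z_*)\tr(\Lambda B) \leq \tr(\Lambda^2 B)$ gives the key estimate
\[
  \normF{X_* - X R}^2 \leq \normF{\PT(H)}^2/\lambda_r(Z_*) \leq C_2^2 \norm{\scrA(H)}^2/(n\,\lambda_r(Z_*)).
\]

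Combining these with $\nucnorm{H} \leq \sqrt{2r}\normF{\PT(H)} + \tr\PTp(H) \leq \sqrt{2r}\,Q$, \Cref{lem:basic_ineq} becomes
\[
  \norm{\scrA(H)}^2 \leq \opnorm{\scrA^*(\xi)}\sqrt{2r}\,Q + \frac{2 C_2^2 \opnorm{\scrA^*(y)}}{(p+2)\,n\,\lambda_r(Z_*)}\,\norm{\scrA(H)}^2.
\]
By the definition of $\pthresh$ the coefficient of $\norm{\scrA(H)}^2$ on the right is exactly $(\pthresh+2)/(p+2)$, which is strictly less than $1$ under the hypothesis $p > \pthresh$. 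Absorbing it to the left gives $\norm{\scrA(H)}^2\,(p-\pthresh)/(p+2) \leq \opnorm{\scrA^*(\xi)}\sqrt{2r}\,Q$; invoking $\norm{\scrA(H)}^2 \geq n\,Q^2/C_1^2$ from the first ingredient and dividing by $Q$ delivers the advertised bound. The main obstacle is getting the sharpened estimate $\normF{X_* - X R}^2 \leq \normF{\PT(H)}^2/\lambda_r(Z_*)$: a naive bound by $\normF{H}^2/\lambda_r(Z_*)$ would let the $\tr\PTp(H)$ term contaminate the coefficient of $\norm{\scrA(H)}^2$ and wreck the absorption; the identity $P_X^\perp Z_* = -P_X^\perp \PT(H)\Pmat$, driven by $Z_* \Ppmat = 0$, is exactly what restricts the residual to the low-rank tangent component.
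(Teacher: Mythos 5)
Your proposal is correct and follows essentially the same route as the paper's proof: \Cref{lem:basic_ineq} with $R = X^\dagger X_*$, the same two linear combinations of \Cref{assump:gen_dual,assump:gen_iso} (yielding exactly the constants $C_2$ and $C_1$ the paper uses in \eqref{eq:emp_lb_noiseless} and \eqref{eq:emp_lb_noisy}), the bound $\normF{X_* - XR}^2 \leq \normF{\PT(H)}^2/\lambda_r(Z_*)$, and absorption of the residual term via the definition of $\pthresh$. The only cosmetic difference is that you justify the key residual estimate through the trace/Rayleigh argument with $B = U^* P_X^\perp U$, whereas the paper gets the same inequality via $\normF{\PT(H)} \geq \normF{\Pmat H P_X^\perp} = \normF{X_*(X_* - XR)^*} \geq \sigma_r(X_*)\normF{X_* - XR}$.
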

\begin{proof}
	Let $X$ be a second-order critical point of \eqref{eq:opt_gen}.
	Then, for any matrix $R \in \F^{p \times r}$, \Cref{lem:basic_ineq} gives
	\begin{align*}
		\norm{\scrA(X X^* - Z_*)}^2 &\leq \ip{\xi}{\scrA(X X^* - Z_*)} \\
		&\qquad + \frac{2 \opnorm{\scrA^*(y)}}{p + 2} \normF{X_* - X R}^2.
	\end{align*}
	
	Set $H = X X^* - Z_*$.
	We rearrange the previous inequality as
	\begin{equation}
		\begin{aligned}
			&\frac{p - \pthresh}{p+2} \norm{\scrA(H)}^2 
			\leq \ip{\xi}{\scrA(H)} \\
			&\quad + \frac{ 2 \opnorm{\scrA^*(y)} \normF{X_* - X R}^2 - (\pthresh+2) \norm{\scrA(H)}^2 }{p+2}.
		\end{aligned} \label{eq:dual_proof_decomp}
	\end{equation}
	We first consider the second term on the right-hand side of \eqref{eq:dual_proof_decomp}, showing that it cannot be positive.
	We need to lower bound $\norm{\scrA(H)}$.
	We will denote, for brevity,
	\[
		\HT = \PT(H) \quad \text{and} \quad \HTp = \PTp(H) = \Ppmat X X^* \Ppmat \succeq 0.
	\]
	From Cauchy-Schwartz and \Cref{assump:gen_dual}, we have
	\begin{align*}
		\norm{\lambda} \norm{\scrA(H)}
		&\geq \ip{Y}{H} \\
		&= \ip{\PTp(Y)}{\HTp} + \ip{\PT(Y)}{\HT} \\
		&\geq \tr \HTp - \epsilon \normF{\HT}. 
	\end{align*}
	We can add this (scaled by $\upperiso$) to the inequality from \Cref{assump:gen_iso} to obtain
	\begin{align*}
		\parens*{\frac{1}{\sqrt{n}} + \upperiso \norm{\lambda}} \norm{\scrA(H)}
		&\geq (\loweriso - \upperiso \epsilon) \normF{\HT},
	\end{align*}
	which implies
	\begin{equation}
		\label{eq:emp_lb_noiseless}
		\norm{\scrA(H)}^2 \geq n \parens*{\frac{\loweriso - \upperiso \epsilon}{1 + \upperiso \sqrt{n} \norm{\lambda}}}^2 \normF{\HT}^2.
	\end{equation}
	Now, choose $R \in \F^{p \times r}$ to minimize $\normF{X_* - X R}$.
	Optimality of $R$ is equivalent to each row of $X_* - X R$ being orthogonal to $\range(X)$,
	that is, $X_* - X R = \PXp X_*$,
	where $\PXp$ is the orthogonal projection matrix onto $\range(X)^\perp \subseteq \F^d$.
	We then have
	\begin{align}
		\normF{\HT}
		&\geq \normF{\Pmat H} \nonumber \\
		&\geq \normF{\Pmat H \PXp} \nonumber \\
		&= \normF{X_* X_*^* \PXp} \nonumber \\
		&= \normF{X_* (X_* - X R)^*} \nonumber \\
		&\geq \sigma_r(X_*) \normF{X_* - X R}. \label{eq:normHT_lb}
	\end{align}
	
	Combining \eqref{eq:emp_lb_noiseless} and \eqref{eq:normHT_lb} and recalling that $\lambda_r(Z_*) = \sigma_r^2(X_*)$, we obtain
	\begin{align*}
		\norm{\scrA(H)}^2
		&\geq n \parens*{\frac{\loweriso - \upperiso \epsilon}{1 + \upperiso \sqrt{n} \norm{\lambda}}}^2 \lambda_r(Z_*) \normF{X_* - XR}^2 \\
		&= \frac{2}{\pthresh + 2} \opnorm{\scrA^*(y)} \normF{X_* - X R}^2.
	\end{align*}
	Using this to simplify \eqref{eq:dual_proof_decomp}, we obtain
	\begin{equation}
		\label{eq:det_noisy_ineq}
		\frac{p - \pthresh}{p+2} \norm{\scrA(H)}^2
		\leq \ip{\xi}{\scrA(H)}
		\leq \opnorm{\scrA^*(\xi)} \nucnorm{H}.
	\end{equation}
	
	The previous inequalities $\norm{\lambda} \norm{\scrA(H)} \geq \tr \HTp - \epsilon \normF{\HT}$ and $\frac{1}{\sqrt{n}} \norm{\scrA(H)} \geq \loweriso \normF{\HT} - \upperiso \tr \HTp$,
	combined in different proportions than before, give
	\begin{align*}
		&\parens*{ (\upperiso+\loweriso) \norm{\lambda} + \frac{1 + \epsilon}{\sqrt{n}} } \norm{\scrA(H)} \\
		&\qquad \geq (\loweriso - \upperiso\epsilon) ( \normF{\HT} + \tr \HTp ),
	\end{align*}
	which implies
	\begin{equation}
		\label{eq:emp_lb_noisy}
		\norm{\scrA(H)}^2 \geq n \mutl ( \normF{\HT} + \tr \HTp)^2,
	\end{equation}
	where $\mutl \coloneqq \parens*{ \frac{\loweriso - \upperiso\epsilon}{1 + \epsilon + \sqrt{n} \norm{\lambda} (\upperiso + \loweriso) } }^2$.
	On the other hand,
	\[
		\nucnorm{H} \leq \nucnorm{\HT} + \nucnorm{\HTp} \leq \sqrt{2r} \normF{\HT} + \tr \HTp.
	\]
	Combining this with \eqref{eq:det_noisy_ineq} and \eqref{eq:emp_lb_noisy},
	we obtain
	\begin{align*}
		&\frac{p - \pthresh}{p+2} n \mutl ( \normF{\HT} + \tr \HTp)^2 \\
		&\qquad \leq \opnorm{\scrA^*(\xi)} (\sqrt{2r} \normF{\HT} + \tr \HTp),
	\end{align*}
	from which the result easily follows.
\end{proof}

\subsection{Application: Gaussian measurements}
\label{sec:gauss_body}
In this section, we show how \Cref{thm:dual_thm} can be applied with the Gaussian measurement model to prove \Cref{thm:gauss_final_nl}.
In this model, the measurement matrices are $A_i = a_i a_i^*$ for i.i.d.\ standard real or complex Gaussian vectors $a_1, \dots, a_n$.
One might reasonably ask why we could not prove \Cref{thm:gauss_final_nl} similarly to the proof of \Cref{thm:subG_final} in \Cref{sec:subG_proofs}; we could then avoid all the complicated machinery of \Cref{thm:dual_thm}.
The reason is that, even with Gaussian measurements, there is no straightforward extension of \Cref{lem:iso_subG} to general $r > 1$ that does not have a problematic dependence on $r$ (in a landscape result, this would require $p$ to scale superlinearly in $r$).
Hence another method is needed.

In this section, we will use liberally the notation $a \lesssim b$ (or $b \gtrsim a$) to mean that $a \leq C b$ for some unspecified but universal constant $C > 0$.
Similarly, the constant $c$ that appears in the probability estimates will not depend on the problem parameters but can change from one usage to another.

We need several supporting lemmas showing that the conditions of \Cref{thm:dual_thm} are satisfied with high probability.
\begin{lemma}
	\label{lem:gauss_dualcert}
	For fixed rank-$r$ $Z_* \succeq 0$,
	if $n \gtrsim r d$,
	then, with probability at least $1 - c n^{-2}$,
	the Gaussian measurement ensemble satisfies
	\Cref{assump:gen_dual} with
	\begin{align*}
		\norm{\lambda} &\lesssim \sqrt{\frac{r}{n}} \quad \text{and} \\
		\epsilon &\lesssim \sqrt{\frac{r^2 (d + \log n)}{n}}.
	\end{align*}
\end{lemma}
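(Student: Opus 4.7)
The plan is to construct $\lambda = \lambda_1 + \lambda_2$, splitting the certificate into a uniform ``base'' piece and a ``correction.'' First I would take $\lambda_1 = \frac{\alpha}{n} \mathbf{1}$ with $\alpha = 1 + C \sqrt{d/n}$. By the Wishart concentration inequality $\opnorm{\frac{1}{n}\sum_i a_i a_i^* - I_d} \lesssim \sqrt{d/n}$ (which holds with probability $\geq 1 - e^{-cd}$ when $n \gtrsim d$), this choice gives $Y_1 \coloneqq \scrA^*(\lambda_1) \succeq I_d$, and thus $\PTp(Y_1) \succeq \Ppmat$, with a semidefinite slack of order $\sqrt{d/n}$. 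Immediately, $\norm{\lambda_1} = \alpha/\sqrt{n} \asymp 1/\sqrt{n}$, while $\normF{\PT(Y_1)} \lesssim \sqrt{r}$ from the dominant $\alpha \Pmat$ term in $\PT(\alpha I_d)$.

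To push $\normF{\PT(Y)}$ below $\sqrt{r}$, I would use a correction $\lambda_2$ aimed at (approximately) cancelling $\PT(Y_1)$ on the tangent space. The natural choice is the minimum-norm solution of $\PT \scrA^*(\lambda_2) = -\PT(Y_1)$; its norm is controlled by the smallest singular value of $\scrA \PT \colon \T \to \R^n$, which is $\gtrsim \sqrt{n}$ (i.e., restricted isometry on $\T$) with high probability when $n \gtrsim rd$, by an $\epsilon$-net plus matrix concentration argument applied to the $O(rd)$-dimensional subspace $\T$. This yields $\norm{\lambda_2} \lesssim \normF{\PT(Y_1)}/\sqrt{n} \lesssim \sqrt{r/n}$, and so the claimed $\norm{\lambda} \lesssim \sqrt{r/n}$.

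The hard part is to ensure that the correction preserves $\PTp(Y) \succeq \Ppmat$ and yields the claimed $\epsilon$ bound. Writing $\scrA^*(\lambda_2) = \scrA^* \scrA(H)$ for a suitable $H \in \T$ with $\normF{H} \lesssim \sqrt{r}/n$, the Gaussian expectation $\E \frac{1}{n} \scrA^* \scrA(H) = \fourthmoment H + (\tr H) I_d$ contains a trace term $(\tr H) \Ppmat$ in the $\Tp$ projection of magnitude $\asymp r$ in operator norm---far too large to absorb into the $O(\sqrt{d/n})$ slack of $\lambda_1$. The fix is to modify the construction so that this trace leakage is cancelled, either by adding an explicit trace-correcting scalar multiple of the base term or by using an iterative (golfing-style) scheme with $L = O(\log(rd))$ disjoint batches where the $\T$-residual shrinks by a constant factor per iteration (and the trace leakage is controlled at each step). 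The remaining fluctuations of $\PTp \scrA^* \scrA \PT/n$ around its mean are of operator norm $\lesssim \sqrt{(d + \log n)/n}$ by concentration arguments analogous to \Cref{lem:AZ_conc}; combined with the rank-$\leq 2r$ structure of $\PT(Y)$ and the conversion $\normF{\cdot} \leq \sqrt{2r}\, \opnorm{\cdot}$ on that subspace, this gives the Frobenius bound $\normF{\PT(Y)} \lesssim \sqrt{r^2(d + \log n)/n}$ as claimed.
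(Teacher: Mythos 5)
Your construction gets the easy parts right (the base certificate $\lambda_1=\tfrac{\alpha}{n}\mathbf{1}$, the bound $\norm{\lambda_2}\lesssim\sqrt{r/n}$ via lower isometry of $\scrA$ on $\T$), but the proof has a genuine gap exactly at the step you yourself flag as ``the hard part,'' and that step is never carried out. Two distinct difficulties are left unresolved. First, your correction $\lambda_2$ is \emph{data-dependent}: it is the minimum-norm solution of $\PT\scrA^*(\lambda_2)=-\PT(Y_1)$, so $\scrA^*(\lambda_2)=\scrA^*\scrA(H)$ with $H$ a random element of $\T$ built from $(\PT\scrA^*\scrA\PT)^{-1}$. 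The concentration you invoke (``fluctuations of $\PTp\scrA^*\scrA\PT/n$ \dots analogous to \Cref{lem:AZ_conc}'') is for a \emph{fixed} input, and does not apply to this $H$ without a uniform bound over $\T$ or a decoupling device; controlling such exact, data-dependent certificates is precisely what makes golfing-type or closed-form constructions necessary. Second, the order-$r$ trace leakage $n(\tr H)\Ppmat$ into $\Tp$, which would destroy $\PTp(Y)\succeq\Ppmat$, is acknowledged but the two proposed remedies (an extra trace-correcting multiple of the base, or an iterative golfing scheme with batch splitting) are only gestured at; neither is executed, and each changes the construction enough that the earlier bounds on $\norm{\lambda}$, the PSD slack, and $\normF{\PT(Y)}$ would have to be re-derived. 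Finally, the arithmetic for $\epsilon$ does not follow from what you state: an operator-norm fluctuation of order $\sqrt{(d+\log n)/n}$ times the rank-$O(r)$ conversion $\normF{\cdot}\le\sqrt{2r}\,\opnorm{\cdot}$ gives $\sqrt{r(d+\log n)/n}$, not the claimed $\sqrt{r^2(d+\log n)/n}$; the correct bookkeeping requires tracking how the certificate's coefficients scale with $r$ (they are of size $r/n$ per measurement), which is where the extra factor of $r$ actually enters.

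For comparison, the paper sidesteps both difficulties by writing down an \emph{explicit, non-data-dependent-in-form} certificate: $\lambda_i=\tfrac1n\bigl(\alpha-\beta\sum_{k=1}^r\ip{A_i}{E_k}\,\mathbf{1}\{\ip{A_i}{E_k}\le\gamma\}\bigr)$ with $E_k=u_ku_k^*$, where $\alpha=(\moment_4^\gamma+r\,\moment_2^\gamma)\beta$ is tuned from truncated Gaussian moments so that $\E Y=\Ppmat$ \emph{exactly} --- i.e., the trace compensation you are missing is built into the closed form. Because each $\lambda_i$ depends only on $A_i$, the terms are i.i.d., and a single truncation-plus-Bernstein covering argument (as in \Cref{lem:AZ_conc}) bounds $\opnorm{Y-\E Y}\lesssim\sqrt{r(d+\log n)/n}$, which yields both the PSD condition (after a harmless rescaling) and $\epsilon\le\sqrt{2r}\,\opnorm{\PT(Y)}\lesssim\sqrt{r^2(d+\log n)/n}$, while $\norm{\lambda}\lesssim\sqrt{r/n}$ follows from elementary moment bounds and bounded-difference concentration. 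To salvage your route you would have to either implement the golfing scheme in full (with per-batch control of the tangent residual, the trace leakage, and the $\Tp$ fluctuation) or prove uniform-over-$\T$ concentration for $\PTp\scrA^*\scrA\PT$; as written, the lemma is not proved.
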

This is a straightforward generalization of \cite[Lemma 2.3]{Candes2013} and \cite[Theorem 1]{Demanet2014}, which only consider $r = 1$ and do not bound $\norm{\lambda}$.
We provide a proof below in \Cref{sec:gauss_lemmas}.

\begin{lemma}
	\label{lem:gauss_iso}
	For fixed rank-$r$ $Z_*$,
	if $n \gtrsim r d$, with probability at least $1 - n^{-2}$,
	the Gaussian measurement ensemble satisfies \Cref{assump:gen_iso} with
	\begin{align*}
		\loweriso &\gtrsim 1 \\
		\upperiso &\lesssim \sqrt{\frac{d + \log n}{n}}.
	\end{align*}
\end{lemma}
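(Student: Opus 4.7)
The plan is to decompose any admissible $H$ (with $\PTp(H) \succeq 0$) as $H = H_1 + H_2$, where $H_1 = \PT(H) \in \T$ has rank at most $2r$ and $H_2 = \PTp(H) \succeq 0$, and to combine a restricted lower isometry on the low-dimensional tangent space $\T$ with concentration of $\scrA^*\scrA$ applied to PSD matrices, exploiting that the three trace contributions in the mean computation telescope nonnegatively.

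First I would establish that $\scrA$ satisfies a restricted lower isometry on $\T$: for $n \gtrsim rd$ and with probability at least $1 - cn^{-2}$, uniformly over $H_1 \in \T$,
\[
\tfrac{1}{n}\|\scrA(H_1)\|^2 \gtrsim \|H_1\|_F^2 + (\tr H_1)^2,
\]
matching the expectation in \eqref{eq:expectation_gauss}. This follows from a standard covering-plus-matrix-concentration argument using that the rank-$2r$ slice of $\T$ has effective dimension $O(rd)$, which is where the sample complexity enters, and directly yields $\loweriso \gtrsim 1$.

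Next I would expand $\|\scrA(H)\|^2 = \|\scrA(H_1)\|^2 + 2\ip{\scrA(H_1)}{\scrA(H_2)} + \|\scrA(H_2)\|^2$ and track each piece. Because $H_1 \perp H_2$ and $\E[\scrA^*\scrA(S)/n] = m_4 S + (\tr S)I_d$, the mean of the sum reduces to $m_4(\|H_1\|_F^2 + \|H_2\|_F^2) + (\tr H)^2 \geq m_4\|H_1\|_F^2$: the three trace contributions $(\tr H_1)^2$, $2(\tr H_1)(\tr H_2)$, $(\tr H_2)^2$ telescope into $(\tr H)^2 \geq 0$, eliminating any sign issue. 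For the contribution of $H_2$, the Cauchy--Schwarz bound $\|\scrA(H_2)\|^2 \geq n\ip{\bar A}{H_2}^2 \geq n\lambda_{\min}(\bar A)^2 \tr^2 H_2$, combined with $\lambda_{\min}(\bar A) \geq 1 - c\sqrt{d/n}$ w.h.p.\ for $n \gtrsim d$, handles the bulk of $\|\scrA(H_2)\|^2$. The remaining piece is the fluctuation of the cross term, $\tfrac{1}{n}\ip{H_1}{\Delta(H_2)}$ with $\Delta(H_2) := \scrA^*\scrA(H_2) - \E[\scrA^*\scrA(H_2)]$, which must be controlled uniformly in $H_2 \succeq 0$. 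A matrix-Bernstein argument in the style of \Cref{lem:AZ_conc}, applied to $\PT \Delta(H_2)$ (which lies in a subspace of dimension $O(rd)$), should yield
\[
\tfrac{1}{n}|\ip{H_1}{\Delta(H_2)}| \lesssim \sqrt{(d+\log n)/n}\,\|H_1\|_F \tr H_2
\]
uniformly over $H_1 \in \T$ and $H_2 \succeq 0$. Combining all bounds, applying AM--GM to split the resulting mixed $\|H_1\|_F \tr H_2$ term, and invoking the elementary identity $\sqrt{A^2 - B^2} \geq A - B$ for $A \geq B \geq 0$ (with the trivial case when $A < B$) yields the desired inequality with $\loweriso \gtrsim 1$ and $\upperiso \lesssim \sqrt{(d+\log n)/n}$.

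The main obstacle will be the sharp uniform bound on the projected fluctuation $\ip{H_1}{\Delta(H_2)}$: a crude dual-norm bound $|\ip{H_1}{\Delta}| \leq \sqrt{2r}\|H_1\|_F\opnorm{\Delta}$ together with pointwise operator-norm concentration of $\Delta(H_2)$ introduces a factor of $\sqrt r$ that would inflate $\upperiso$ to $\sqrt{r(d+\log n)/n}$ and destroy the sample-complexity claim $n \gtrsim rd$ once it feeds into $\upperiso \epsilon$ in \Cref{thm:dual_thm}. Avoiding this requires projecting $\Delta$ onto $\T$ \emph{before} taking the operator norm and tightly controlling the resulting matrix-Bernstein sum (or equivalently, using intrinsic-dimension concentration adapted to the $O(rd)$-dimensional tangent space), which is the main technical step.
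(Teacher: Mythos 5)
There is a genuine gap, and it sits exactly where you flagged the ``main technical step''---but it is not fixable in the way you hope, and there is a second structural problem as well. First, projecting the fluctuation onto $\T$ before taking norms does not remove the $\sqrt r$: for a single fixed unit vector $v$, writing $\Delta = \scrA^*\scrA(vv^*) - \E\,\scrA^*\scrA(vv^*)$, each column $\Delta u_k$ (with $u_k$ an eigenvector of $Z_*$) has Euclidean norm of order $\sqrt{nd}$, so $\normF{\PT(\Delta)} \asymp \sqrt{rnd}$; since $\sup\{\,|\ip{H_1}{\Delta}| : H_1\in\T,\ \normF{H_1}\le 1\,\} = \normF{\PT(\Delta)}$, your claimed uniform bound $\tfrac1n|\ip{H_1}{\Delta(H_2)}| \lesssim \sqrt{(d+\log n)/n}\,\normF{H_1}\tr H_2$ already fails by a factor $\sqrt r$ for the rank-one choice $H_2 = vv^*$, so no refinement of the concentration argument can recover it. Second, the ``telescoping'' of the trace terms only works for exact means. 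In your scheme $\tfrac1n\norm{\scrA(H_1)}^2$ and $\tfrac1n\norm{\scrA(H_2)}^2$ are only available up to factors $(1-\delta_1)$ and $(1-\delta_2)$ with $\delta_1\approx\sqrt{rd/n}$, $\delta_2\approx\sqrt{d/n}$, while the cross mean $2\,(\tr H_1)(\tr H_2)$ is kept exactly; the resulting quadratic form $(1-\delta_1)\tr^2 H_1 + 2(\tr H_1)(\tr H_2) + (1-\delta_2)\tr^2 H_2$ is indefinite, and at the feasible configuration $\tr H_1 \approx -\tr H_2$ (i.e., $\tr(XX^*)\approx \tr Z_*$ with a rotated column space) it is $\approx -(\delta_1+\delta_2)\tr^2 H_2$. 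Carrying this negative $\tr^2 H_2$ term through the final completion of squares forces $\upperiso \gtrsim (\delta_1+\delta_2)^{1/2} \approx (rd/n)^{1/4}$, far weaker than the claimed $\sqrt{(d+\log n)/n}$, even if the cross-term issue were resolved. (A more minor point: your step-one lower isometry on $\T$ at $n\gtrsim rd$ with rank-one measurements needs a small-ball/truncation argument rather than plain covering plus matrix concentration, because the summands $\ip{A_i}{H_1}^2$ are heavy-tailed.)

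The paper's proof avoids both obstacles by never expanding the square. It establishes (\Cref{lem:gauss_lb_fronuc}) the bound $\tfrac1n\norm{\scrA(H)}_1 \ge c_1\normF{H} - c_2\sqrt{(d+\log n)/n}\,\nucnorm{H}$ for \emph{all} $H\in\herms_d$, via a small-ball (Paley--Zygmund/Hanson--Wright) lower bound on $\E|\ip{A_1}{H}|$ together with a symmetrization and Chernoff-type uniform deviation bound over the nuclear-norm ball; it then uses $\tfrac{1}{\sqrt n}\norm{\scrA(H)} \ge \tfrac1n\norm{\scrA(H)}_1$ and splits $\nucnorm{H} \le \sqrt{2r}\,\normF{\PT(H)} + \tr\PTp(H)$ only at the very end. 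In that route the unavoidable $\sqrt r$ multiplies the coefficient of $\normF{\PT(H)}$, where it is absorbed into $\loweriso$ using $n\gtrsim rd$, and never touches $\upperiso$. Any repair of your decomposition-based argument would have to achieve the same relocation of the $\sqrt r$ loss onto the tangent-space term, which the expansion of $\norm{\scrA(H_1+H_2)}^2$ with a mixed $\normF{H_1}\tr H_2$ term does not permit.
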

We provide a proof below in \Cref{sec:gauss_lemmas}.
The methods of \cite{Candes2013,Demanet2014} would provide a similar result with $\upperiso \approx 1$, but, considering the fact that the bounds on $\epsilon$ and $\norm{\lambda}$ in \Cref{lem:gauss_dualcert} increase with $r$, this is suboptimal for larger $r$.

With these tools, we can proceed to the main proof:
\begin{proof}[Proof of \Cref{thm:gauss_final_nl}]
	The failure probabilities of the supporting lemmas are of order $n^{-2}$, so, taking a union bound, the final result has failure probability of the same order.
	
	By the expectation calculation \eqref{eq:expectation_gauss}, \Cref{lem:AZ_conc}, and the fact that $n \gtrsim d$, we have, similarly to the proof of \Cref{thm:subG_final} in \Cref{sec:subG_proofs},
	\[
		\frac{1}{n} \opnorm{\scrA^* \scrA(Z_*)}
		\lesssim \parens*{ 1 + \frac{d \log d}{n}} \tr Z_*.
	\]
	\Cref{lem:gauss_dualcert,lem:gauss_iso} imply that \Cref{assump:gen_dual,assump:gen_iso} hold with
	\begin{gather*}
		\norm{\lambda} \lesssim \sqrt{\frac{r}{n}}, \quad \epsilon \lesssim \sqrt{\frac{r^2 (d + \log n)}{n}}, \\ \loweriso \gtrsim 1, \quad \text{and} \quad \upperiso \lesssim \sqrt{\frac{d + \log n}{n}},
	\end{gather*}
	so
	\[
		\upperiso \sqrt{n}\norm{\lambda} \lesssim \sqrt{\frac{rd}{n}}, \quad \text{and} \quad \upperiso\epsilon \lesssim \frac{r(d + \log n)}{n}.
	\]
	With $n \gtrsim rd$ with large enough constant,
	we will have $\upperiso \sqrt{n} \norm{\lambda} \leq 1/2$ and $\upperiso \epsilon \leq \loweriso/2$,
	so the quantity $\tau$ from \Cref{thm:dual_thm} can be upper bounded as
	\begin{align*}
		\tau
		&\leq \frac{18}{\loweriso^2} \frac{\opnorm{\scrA^* \scrA(Z_*)} + \opnorm{\scrA^*(\xi)}}{n \lambda_r(Z_*)} - 2 \\
		&\lesssim  \frac{(1 + \frac{d \log d}{n}) \tr Z_* + \frac{1}{n}\opnorm{\scrA^*(\xi)}}{\lambda_r(Z_*)}.
	\end{align*}
	We then apply \Cref{thm:dual_thm}.
\end{proof}

\subsection{Proofs of auxiliary lemmas}
\label{sec:gauss_lemmas}
In this section we provide proofs of \Cref{lem:gauss_dualcert,lem:gauss_iso}, which we used to prove \Cref{thm:gauss_final_nl}.

\newcommand{\fourthmomenttrunc}{\moment_4^\gamma}
\newcommand{\secondmomenttrunc}{\moment_2^\gamma}
\begin{proof}[Proof of \Cref{lem:gauss_dualcert}]
	If the matrix $U$ has columns $u_1, \dots, u_r$ (these are the nontrivial eigenvectors of $Z_*$),
	set $E_k = u_k u_k^*$.
	
	We will set, for constants $\alpha, \beta, \gamma > 0$ that we will tune,
	\[
		\lambda_i = \frac{1}{n} \parens*{ \alpha - \beta \sum_{k = 1}^r \ip{A_i}{E_k} \indicator{ \ip{A_i}{E_k} \leq \gamma } }.
	\]
	By construction and the properties of Gaussian random vectors,
	note that, for each $i$, the $r$ random variables $\{ \ip{A_i}{E_k} \}_{k}$ are i.i.d.\ random variables with the distribution of $\abs{z}^2$, where $z$ is a standard normal random variable (real or complex, as appropriate).
	
	Then, we can calculate, by similar methods as for \eqref{eq:expectation_gauss},
	\begin{align*}
		\E Y
		&= n \E \lambda_1 A_1 \\
		&= \alpha I_d - \beta \parens{ \fourthmomenttrunc \Pmat + r \secondmomenttrunc I_d } \\
		&= \parens{ \alpha - \beta \fourthmomenttrunc - \beta r \secondmomenttrunc } \Pmat + (\alpha - \beta r \secondmomenttrunc) \Ppmat,
	\end{align*}
	where
	\begin{align*}
		\secondmomenttrunc &\coloneqq \E[ \abs{z}^2 \indicator{\abs{z}^2 \leq \gamma} ], \qquad \text{and} \\ \fourthmomenttrunc &\coloneqq \E[ \abs{z}^4 \indicator{\abs{z}^2 \leq \gamma} ] - \secondmomenttrunc.
	\end{align*}
	Setting $\alpha = \parens{\fourthmomenttrunc + r \secondmomenttrunc }\beta$,
	we obtain
	\[
		\E Y = \beta \fourthmomenttrunc \Ppmat.
	\]
	We can then set $\gamma$ to be a moderate constant (say, 10) so that $\fourthmomenttrunc \gtrsim 1$ and then set $\beta = (\fourthmomenttrunc)^{-1}$ to obtain $\E Y = \Ppmat$.
	
	It will be useful to bound certain moments of the i.i.d.\ random variables $\lambda_1, \dots, \lambda_n$.
	By construction, $\E \lambda_1 = \frac{1}{n}$.
	Note, furthermore, that we can write
	\[
		\lambda_1 = \E \lambda_1 + \frac{\beta}{n} \sum_{k=1}^r \underbrace{(\secondmomenttrunc - \ip{A_1}{E_k} \indicator{ \ip{A_1}{E_k} \leq \gamma } )}_{\eqqcolon \varepsilon_k}.
	\]
	Recall from above that, because $a_1$ is Gaussian, $\varepsilon_1, \dots, \varepsilon_r$ are i.i.d.\ zero-mean random variables.
	Furthermore, $\E \varepsilon_1^2 \lesssim 1$ and $\E \varepsilon_1^4 \lesssim 1$.
	We can therefore estimate (noting that we have chosen $\beta \lesssim 1$)
	\begin{align*}
		\E \lambda_1^2
		&= (\E \lambda_1)^2 + \frac{\beta^2}{n^2} \sum_{k=1}^r \E \varepsilon_k^2 \\
		&\lesssim \frac{r}{n^2},
	\end{align*}
	and
	\begin{align*}
		\E \lambda_1^4
		&\lesssim (\E \lambda_1)^4 + \frac{\beta^4}{n^4} \E \parens*{ \sum_{k=1}^r \varepsilon_k }^4 \\
		&= \frac{1}{n^4} + \frac{\beta^4}{n^4} \sum_{k,\ell = 1}^r \E[ \varepsilon_k^2 \varepsilon_\ell^2 ] \\
		&\lesssim \frac{r^2}{n^4}.
	\end{align*}
	
	We now bound $\norm{\lambda}$.
	Note that
	\[
		\E \norm{\lambda}^2 = n \E \lambda_1^2 \lesssim \frac{r}{n},
	\]
	so, by Jensen's inequality $\E \norm{\lambda} \lesssim \sqrt{\frac{r}{n}}$.
	Noting furthermore that, by construction, each $\abs{\lambda_i} \lesssim \frac{r}{n}$ almost surely,
	a standard concentration inequality for Lipschitz functions of independent and bounded random variables (e.g., \cite[Thm.~6.10]{Boucheron2013}) gives,
	for $t \geq 0$, with probability at least $1 - e^{-t^2/2}$,
	\[
		\norm{\lambda} - \E \norm{\lambda} \lesssim \frac{r}{n} t.
	\]
	Then, choosing $t = 2 \sqrt{\log n}$,
	we obtain, with probability at least $1 - n^{-2}$,
	\[
		\norm{\lambda} \lesssim \sqrt{\frac{r}{n}} + \frac{r}{n} \sqrt{\log n}
		\lesssim \sqrt{\frac{r}{n}},
	\]
	where the last inequality uses the fact that, for $n \gtrsim r d$,
	\[
		\frac{r \log n}{n} \leq \max\braces*{\frac{rd}{n}, \frac{rd}{e^d} } \lesssim 1.
	\]
	
	We now turn to the concentration of $Y = \scrA^*(\lambda)$ about its mean.
	We use a similar approach as in the proof of \Cref{lem:AZ_conc}.
	Again, $c$, $c'$, etc.\ denote universal positive constants which may change from one appearance to another.
	
	For fixed unit-norm $x \in \F^d$,
	\[
		\ip{Y}{x x^*} = \sum_{i=1}^n \lambda_i \abs{\ip{a_i}{x}}^2.
	\]
	Noting that $\abs{\lambda_i} \leq c \frac{r}{n}$ almost surely, we can bound the moments of each (i.i.d.) term in the sum as, for $k \geq 2$,
	\begin{align*}
		\E \abs*{ \lambda_i \abs{\ip{a_i}{x}}^2 }^k
		&\leq \parens*{ c \frac{r}{n}}^{k-2} \E [\lambda_i^2 \abs{\ip{a_i}{x}}^{2k}] \\
		&\leq \parens*{ c \frac{r}{n}}^{k-2} (\E \lambda_i^4)^{1/2}  (\E \abs{\ip{a_i}{x}}^{4k})^{1/2} \\
		&\leq c' \parens*{ c \frac{r}{n}}^{k-2} \cdot \frac{r}{n^2} \cdot \parens*{ (c'')^{2k} \cdot (2k)! }^{1/2} \\
		&\leq c' \parens*{ c \frac{r}{n}}^{k-2} \cdot \frac{r}{n^2} \cdot k!.
	\end{align*}
	The third inequality uses a standard Gaussian moment bound (see, e.g., the proof of \Cref{lem:AZ_conc}) along with our estimate of $\E \lambda_1^4$.
	The last inequality absorbs the $(c'')^k$ term into the others and also uses the fact (e.g., by Stirling's approximation) that $\sqrt{(2k)!} \leq c' c^k k!$,
	again consolidating the constants.
	
	Then, following similar steps as in the proof of \Cref{lem:AZ_conc},
	we obtain, with probability at least $1 - 2 n^{-2}$,
	\begin{align*}
		\opnorm{Y - \E Y} &\leq c\parens*{\sqrt{\frac{r(d+\log n}{n}} + \frac{ r(d + \log n) }{n}} \\
		&\leq c \sqrt{\frac{r(d + \log n)}{n}} \eqqcolon \delta.
	\end{align*}
	Note that we can then take
	\[
		\epsilon \coloneqq \sqrt{2r} \delta \lesssim \sqrt{\frac{r^2 (d + \log n)}{n}}.
	\]
	We have only proved that, on this event, $\PTp(Y) \succeq (1 - \delta) \Ppmat$.
	However, choosing $n \gtrsim rd$ with large enough constant ensures, say, $\delta \leq 1/2$,
	so rescaling $Y$ by $(1 - \delta)^{-1} \leq 2$ gives $\PTp(Y) \succeq \PTpmat$,
	only changing the other bounds by a constant.
	This completes the proof.
\end{proof}

To prove \Cref{lem:gauss_iso},
we need the following technical lemma:
\begin{lemma}
	\label{lem:gauss_lb_fronuc}
	There exist constants $c_1, c_2 > 0$ such that, if $n \geq d$, with probability at least $1 - n^{-2}$,
	the Gaussian ensemble satisfies, for all $H \in \herms_d$,
	\[
		\frac{1}{n} \norm{\scrA(H)}_1
		\geq c_1 \normF{H} - c_2 \sqrt{\frac{d + \log n}{n}} \nucnorm{H}.
	\]
\end{lemma}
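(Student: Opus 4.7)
The plan is, by homogeneity of the target inequality in $H$, to reduce to a uniform lower bound on the unit nuclear-norm ball $B = \{H \in \herms_d : \nucnorm{H} \leq 1\}$, and then combine a pointwise lower bound on the mean with a uniform concentration argument for the empirical fluctuations. For the pointwise mean, fix $H$ and let $X = a^* H a$ with $a$ a standard real or complex Gaussian vector. Then $X$ is a degree-$2$ Gaussian chaos whose second moment satisfies $\E X^2 = (\tr H)^2 + c \normF{H}^2 \gtrsim \normF{H}^2$ by direct calculation. Gaussian hypercontractivity gives $\E X^4 \lesssim (\E X^2)^2$, and the reverse-Hölder chain $(\E X^2)^{3/2} \leq \E\abs{X} \cdot (\E X^4)^{1/2}$ (obtained by two Cauchy–Schwarz applications) then yields $\E\abs{X} \gtrsim (\E X^2)^{1/2} \gtrsim \normF{H}$. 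Hence $\E \frac{1}{n}\norm{\scrA(H)}_1 \geq c_1 \normF{H}$ pointwise.

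For the uniform fluctuation bound, define
\[
	\Psi = \sup_{H \in B} \abs*{\tfrac{1}{n}\norm{\scrA(H)}_1 - \E\abs{a^* H a}}.
\]
Standard symmetrization together with the Rademacher contraction principle applied to the $1$-Lipschitz map $\abs{\cdot}$ gives
\[
	\E \Psi \leq \frac{4}{n} \E \sup_{H \in B} \abs[\Big]{\sum_{i=1}^n \varepsilon_i a_i^* H a_i} = \frac{4}{n}\, \E \opnorm[\Big]{\sum_{i=1}^n \varepsilon_i a_i a_i^*},
\]
where the equality uses the duality of $\opnorm{\cdot}$ and $\nucnorm{\cdot}$. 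Matrix Bernstein (or non-commutative Khintchine) bounds the right-hand spectral norm by $\lesssim \sqrt{nd}$ whenever $n \geq d$, so $\E \Psi \lesssim \sqrt{d/n}$. To upgrade this to a high-probability statement, I would condition on the event $\mathcal{E} = \{\max_i \norm{a_i}^2 \leq C(d + \log n)\}$, which holds with probability at least $1 - n^{-3}$ by Gaussian concentration for $\norm{a_i}^2$. On $\mathcal{E}$, the class $\{a \mapsto \abs{a^* H a} : H \in B\}$ has uniform envelope $\lesssim d + \log n$ while the pointwise variance stays $\lesssim \E\abs{a^* H a}^2 \leq \nucnorm{H}^2 \leq 1$. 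Bousquet's form of Talagrand's inequality applied with $t \asymp \log n$ then delivers $\Psi \lesssim \sqrt{(d + \log n)/n}$ with probability at least $1 - 2 n^{-2}$.

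Combining the pointwise lower bound on the mean with the uniform fluctuation bound gives, with the stated probability, for every $H \in B$,
\[
	\frac{1}{n}\norm{\scrA(H)}_1 \geq c_1 \normF{H} - c_2 \sqrt{\frac{d + \log n}{n}},
\]
and rescaling in $\nucnorm{H}$ yields the lemma for all $H \in \herms_d$. The main obstacle is the high-probability step: since $a \mapsto \abs{a^* H a}$ is an unbounded Gaussian chaos, Talagrand's inequality does not apply directly. The truncation to $\mathcal{E}$ must be arranged so that (i) conditioning does not shift the relevant expectations by more than $n^{-1}$, and (ii) both the subgaussian term $\sqrt{\sigma^2 \log n / n}$ and the subexponential remainder $(d + \log n)\log n / n$ in Bousquet's inequality are dominated by $\sqrt{(d + \log n)/n}$; this final balancing is precisely what fixes the hypothesis $n \geq d$ and forces the $\log n$ in the statement to appear additively rather than multiplicatively with $d$.
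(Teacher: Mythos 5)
Your mean lower bound is fine and is essentially the paper's first step (the paper uses Paley--Zygmund plus Hanson--Wright where you use hypercontractivity plus reverse H\"older), and your symmetrization-plus-duality reduction to $\opnorm{\sum_i \varepsilon_i a_i a_i^*}$ also matches the paper. But the fluctuation control has a genuine gap, in two places. First, a minor one: matrix Bernstein and noncommutative Khintchine do not give $\E\opnorm{\sum_i \varepsilon_i a_i a_i^*} \lesssim \sqrt{nd}$; they carry a multiplicative $\sqrt{\log d}$ (and Bernstein additionally needs a truncation at $\norm{a_i}^2 \asymp d$, contributing $d\log d$), so the claimed expectation bound does not follow from the cited tools, though it can be proved by the sub-exponential covering argument below. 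Second, and decisively: the high-probability upgrade via Bousquet/Talagrand fails in the claimed range $n \geq d$. After truncating to $\max_i \norm{a_i}^2 \lesssim d + \log n$, the class $\{a \mapsto \abs{a^* H a} : \nucnorm{H} \leq 1\}$ has envelope $U \asymp d + \log n$ (take $H$ aligned with a sample point), and Bousquet's deviation at confidence $x \asymp \log n$ contains the terms $Ux/n$ and $\sqrt{U x \, \E Z}/n$ with $\E Z \lesssim \sqrt{nd}$. When $d \asymp n$, which is permitted by $n \geq d$ and is exactly the regime the downstream theorems live in, these terms are of order $\log n$ and $\sqrt{\log n}$ respectively, while the target is $\sqrt{(d+\log n)/n} \asymp 1$. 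So your closing claim that both remainder terms are "dominated by $\sqrt{(d+\log n)/n}$" under $n \geq d$ is arithmetically false; it would require something like $n \gtrsim d \log^2 n$, which weakens the lemma and would propagate a spurious polylog into the $n \gtrsim rd$ sample-complexity statements it feeds.

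The missing idea is that the dimension must enter only \emph{additively} (through a covering number), never through a class envelope inside the concentration inequality. The paper's proof does this by bounding the exponential moment of the supremum directly: symmetrize at the level of moment generating functions, use nuclear/operator duality to reduce to $\opnorm{\frac{1}{n}\sum_i \varepsilon_i A_i}$, and then take an $\epsilon$-net over unit vectors $x$; for each \emph{fixed} $x$ the summands $\varepsilon_i \abs{\ip{a_i}{x}}^2$ are centered sub-exponential with $O(1)$ parameters (no $d$-dependence), so the net contributes only a factor $e^{cd}$ and one gets
\begin{align*}
	\E \exp\Bigl( t \cdot \sup_{\nucnorm{H} \leq 1} \tfrac{1}{n}\bigl| \norm{\scrA(H)}_1 - \E \norm{\scrA(H)}_1 \bigr| \Bigr)
	\leq \exp\Bigl( c d + c' \tfrac{t^2}{n} \Bigr), \qquad 0 \leq t \leq c n,
\end{align*}
after which a Chernoff bound with $t \asymp \sqrt{n(d+\log n)}$ (valid since $n \geq d$) yields deviation $\sqrt{(d+\log n)/n}$ with probability $1 - n^{-2}$. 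If you want to salvage your route, you must replace Bousquet by an argument of this type (or by a chaos-process bound \`a la Krahmer--Mendelson--Rauhut); no bounded-class Talagrand inequality with envelope $\Theta(d)$ can reach the stated rate at $n \asymp d$.
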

\begin{proof}
	As before, $c, c'$, etc.\ will denote universal positive constants that may change from one usage to another.
	An argument based on the Paley-Zygmund and Hanson-Wright inequalities (see, e.g., \cite[Sec.~2.8.5]{Tropp2015a} or \cite[Sec.~6]{Krahmer2020}) implies that, for all $H \in \herms_d$,
	\[
		\P( \ip{A_1}{H}^2 \geq c \normF{H}^2 ) \geq c'.
	\]
	Therefore,
	\[
		\E \frac{1}{n} \norm{\scrA(H)}_1 = \E \abs{\ip{A_1}{H}} \geq c_1 \normF{H}
	\]
	for some $c_1 > 0$.
	
	We now use a symmetrization argument to bound $\frac{1}{n} \abs{ \norm{\scrA(H)}_1 - \E \norm{\scrA(H)}_1 }$ uniformly over the unit nuclear-norm ball $B \coloneqq \{ H \in \herms_d : \nucnorm{H} \leq 1 \}$.
	A standard symmetrization inequality (e.g., \cite[Thm.~2.1]{Koltchinskii2011a}) implies that, for all $t \geq 0$,
	\begin{align*}
		&\E \exp\parens*{ t \cdot \sup_{H \in B}~\frac{1}{n}\abs*{ \norm{\scrA(H)}_1 - \E \norm{\scrA(H)}_1 } } \\
		&\qquad\leq \E \exp\parens*{ 2 t \cdot \sup_{H \in B}~\abs*{ \frac{1}{n} \sum_{i=1}^n \varepsilon_i \ip{A_i}{H} } } \\
		&\qquad= \E \exp\parens*{ 2 t \opnorm*{ \frac{1}{n} \sum_{i=1}^n \varepsilon_i A_i} },
	\end{align*}
	where $\varepsilon_1, \dots \varepsilon_n$ are i.i.d.\ Rademacher random variables independent of $\{a_i\}_{i=1}^n$.
	The last equality is by the duality of the nuclear and operator norms.
	
	By another covering argument similar to that in the proof of \Cref{lem:AZ_conc} (again, see \cite[Ch.~4]{Vershynin2018}), we have
	\begin{align*}
		&\E \exp\parens*{ 2 t \opnorm*{ \frac{1}{n} \sum_{i=1}^n \varepsilon_i A_i} } \\
		&\qquad\leq c^d \sup_{\norm{x} \leq 1}~\E \exp \parens*{ c' t \abs*{ \frac{1}{n} \sum_{i=1}^n \varepsilon_i \abs{\ip{a_i}{x}}^2  }} \\
		&\qquad\leq 2 c^d \sup_{\norm{x} \leq 1}~\E \exp \parens*{ \frac{c' t}{n} \sum_{i=1}^n \varepsilon_i \abs{\ip{a_i}{x}}^2 } \\
		&\qquad= c^d \sup_{\norm{x} = 1}~\brackets*{ \E \exp\parens*{ \frac{c' t}{n} \varepsilon_1 \abs{\ip{a_1}{x}}^2  } }^n.
	\end{align*}
	The second inequality uses the fact that $\frac{1}{n} \sum_{i} \varepsilon_i \abs{\ip{a_i}{x}}^2$ has a distribution symmetric about $0$.
	As, for all unit-norm $x \in \F^d$,
	$\varepsilon_1 \abs{\ip{a_1}{x}}^2$ is zero-mean and sub-exponential,
	we have (see, e.g., \cite[Prop.~2.7.1]{Vershynin2018}), for all $t \leq c n$,
	\[
		\E \exp\parens*{ \frac{c t}{n} \varepsilon_1 \abs{\ip{a_1}{x}}^2 } \leq \exp\parens*{ c' \frac{t^2}{n^2} }.
	\]
	Combining the last three displays gives, for $0 \leq t \leq c n$,
	\begin{align*}
		&\E \exp\parens*{ t \cdot \sup_{H \in B}~\frac{1}{n} \abs*{ \norm{\scrA(H)}_1 - \E \norm{\scrA(H)}_1 } } \\
		&\qquad \leq \exp\parens*{ c d + c' \frac{t^2}{n} },
	\end{align*}
	and, therefore, for any $c_2 > 0$, by a Chernoff bound,
	\begin{align*}
		&\P\parens*{ \sup_{H \in B}~\frac{1}{n}\abs*{ \norm{\scrA(H)}_1 - \E \norm{\scrA(H)}_1 } \geq c_2 \sqrt{\frac{d + \log n}{n}} } \\
		&\quad \leq \E \exp\parens*{ t \cdot \sup_{H \in B}~\frac{1}{n}\abs*{ \norm{\scrA(H)}_1 - \E \norm{\scrA(H)}_1 } - c_2 \sqrt{\frac{d + \log n}{n}} t } \\
		&\quad \leq \exp\parens*{ c d + c' \frac{t^2}{n} - c_2 \sqrt{\frac{d + \log n}{n}} t }.
	\end{align*}
	As $n \geq d$, we can choose $t = c \sqrt{n(d + \log n)} \leq c n$,
	and, if $c_2 > 0$ is sufficiently large, we obtain
	\begin{align*}
		&\P\parens*{ \sup_{H \in B}~\frac{1}{n}\abs*{ \norm{\scrA(H)}_1 - \E \norm{\scrA(H)}_1 } \geq c_2 \sqrt{\frac{d + \log n}{n}} } \\
		&\qquad	\leq \exp\parens*{ - 2 \log n }
		= n^{-2}.
	\end{align*}
	As the claim is invariant to rescaling of $H$, this completes the proof.
\end{proof}

\begin{proof}[Proof of \Cref{lem:gauss_iso}]
	Let $H \in \herms_d$.
	Note that
	\begin{align*}
		\nucnorm{H} &\leq \nucnorm{\PT(H)} + \nucnorm{\PTp(H)} \\
		&\leq \sqrt{2r} \normF{\PT(H)} + \nucnorm{\PTp(H)}.
	\end{align*}
	\Cref{lem:gauss_lb_fronuc} implies that, with probability at least $1 - n^{-2}$,
	for all $H \in \herms_d$,
	\begin{align*}
		\frac{1}{\sqrt{n}} \norm{\scrA(H)}
		&\geq \frac{1}{n} \norm{\scrA(H)}_1 \\
		&\geq c_1 \normF{H} - c_2 \sqrt{\frac{d + \log n}{n}} \nucnorm{H}
	\end{align*}
	for some constants $c_1, c_2 > 0$ that will remain fixed for the rest of this proof.
	On this event, we then have
	\begin{align*}
		\frac{1}{\sqrt{n}} \norm{\scrA(H)}
		&\geq c_1 \normF{\PT(H)} - c_2 \sqrt{\frac{d + \log n}{n}} \nucnorm{H} \\
		&\geq \parens*{c_1 - c_2 \sqrt{\frac{2r(d + \log n)}{n}}} \normF{\PT(H)} \\
		&\qquad - c_2 \sqrt{\frac{d + \log n}{n}} \nucnorm{\PTp(H)}.
	\end{align*}
	With $n \gtrsim rd$, we have
	\[
		\loweriso \coloneqq c_1 - c_2 \sqrt{\frac{2r(d + \log n)}{n}} \gtrsim 1,
	\]
	and we set
	\[
		\upperiso \coloneqq c_2 \sqrt{\frac{d + \log n}{n}}.
	\]
	This completes the proof.
\end{proof}

\section*{Acknowledgements}
\ifMS \else \fundingack{} \fi
The author thanks Jonathan Dong, Richard Y.\ Zhang, Nicolas Boumal, Christopher Criscitiello, Andreea Muşat, and Quentin Rebjock for helpful inspiration, discussions, and suggestions.
The author also thanks the two anonymous reviewers whose constructive feedback contributed to many improvements in the paper.

\bibliographystyle{IEEEtran}
\bibliography{refs_pr_ncvx}

\ifMS
\begin{IEEEbiographynophoto}{Andrew D. McRae}
	received B.S., M.S., and Ph.D.\ degrees from the Georgia Institute of Technology in 2015, 2016, and 2022, respectively.
	He is currently an assistant professor with the CERMICS group at the École nationale des ponts et chaussées (ENPC),
	part of the Institut Polytechnique de Paris.
	Prior to this, in 2022-2025, he was a postdoctoral researcher
	with the Institute of Mathematics at the École Polytechnique Fédérale de Lausanne (EPFL).
	His research interests are in high-dimensional statistics and signal processing with a particular focus on the role of	optimization.
\end{IEEEbiographynophoto}
\fi

\end{document}